\documentclass[10pt,a4paper]{amsart}
\RequirePackage[OT1]{fontenc}
\RequirePackage{amsthm,amsmath,amssymb,mathrsfs, enumerate}
\RequirePackage[colorlinks,citecolor=blue,urlcolor=blue]{hyperref}
\RequirePackage{hyphenat}
\newtheorem{theorem}{Theorem}[section]
\newtheorem{corollary}[theorem]{Corollary}
\newtheorem{lemma}[theorem]{Lemma}
\newtheorem{prop}[theorem]{Proposition}

\newtheorem{definition}[theorem]{Definition}

\newtheorem{probl}[theorem]{Problem}
\newtheorem{remark}[theorem]{Remark}

\newcommand{\Ii}{1\!\!1}
\newcommand{\be}{\begin{equation}}
\newcommand{\ee}{\end{equation}}

\usepackage{graphicx}
\def\RR{\mathbb{R}}
\def\NN{\mathbb{N}}
\def\CC{\mathbb{C}}

\def\x{{\bf x}}
\def\C{{\mathcal C}}


\title[Quasi-analyticity and determinacy of the full moment problem]{Quasi-analyticity and determinacy of the full moment problem from finite to infinite dimensions}
\author[M. Infusino]{Maria Infusino$^{*}$}
\address{Fachbereich Mathematik und Statistik, Universit\"at Konstanz,\newline
\indent Universit\"atstrasse, 10, Konstanz, 78457, Germany}
\email{infusino.maria@gmail.com}
\thanks{$^{*}$ Supported by a Marie Curie fellowship of the Istituto Nazionale di Alta Matematica (INdAM)}
\keywords{Moment problem; determinacy; quasi-analytic class; infinite-dimensional moment problem; realizability; nuclear space; convex regularization; log-convexity}
\subjclass[2010]{44A60, 26E10, 47B25, 47A70, 28C05, 28C20}
\begin{document}
\begin{abstract}
This paper is aimed to show the essential role played by the theory of quasi-analytic functions in the study of the determinacy of the moment problem on finite and infinite-dimensional spaces. In particular, the quasi-analytic criterion of self-adjointness of operators and their commutativity are crucial to establish whether or not a measure is uniquely determined by its moments. Our main goal is to point out that this is a common feature of the determinacy question in both the finite and the infinite-dimensional moment problem, by reviewing some of the most known determinacy results from this perspective. We also collect some properties of independent interest concerning the characterization of quasi-analytic classes associated to log-convex sequences.
\end{abstract} 
\maketitle
\section*{Introduction}
Among the numerous aspects of the moment problem, the so-called \emph{determinacy question} is certainly one of the most investigated but still far from being completely solved. The moment problem asks whether a given sequence of numbers is the sequence of moments of some non-negative measure with fixed support. If such a measure is unique, then the moment problem is said to be \emph{determinate}. Therefore, the determinacy question is to find under which conditions a non-negative measure with given support is completely characterized by its moments.

In this paper we give an overview about how the concept of quasi-analyticity enters in the study of the determinacy question. As spectral theory and moment theory developed in parallel, the determinacy proofs which can be found in literature often seem circular. We review some of them showing the essential role played by quasi-analyticity techniques.

The moment problem was originally posed in a finite-dimensional setting (see e.g.\!~\cite{Akh65, Sh-Tam43}). More precisely, in the \emph{multivariate power moment problem} the starting multisequence $(m_\alpha)_{\alpha\in\NN_0^d}$ consists of real numbers and the support of the measure is assumed to be a subset $K\subseteq\RR^d$, where $d\in\NN$. However, at an early stage, this problem has also been generalized to the case of infinitely many variables (see e.g.~\cite{BeKo88} for more details on this topic). This abstract formulation of the moment problem is actually very useful in many applications related to the analysis of many-body systems, e.g. in statistical mechanics, spatial ecology, etc. In this setting, each $m_n$ in the starting sequence $(m_n)_{n\in\NN_0}$ is an element of the tensor product of $n$ copies of a certain infinite-dimensional space (e.g.\! for each $n$, $m_n$ is a generalized function of $n$ variables in~$\RR^d$) and the support of the measure is assumed to be a non-linear subset of this space (examples of supports are the set of all $L^2$ functions, the cone of all non-negative generalized functions, the set of all signed measures). 

This paper attempts to show that, regardless of the dimension of the setting in which the moment problem is posed, quasi-analyticity theory gives in some sense the best possible general sufficient determinacy conditions. In the literature, there are different approaches to the investigation of the determinacy question for concrete cases in the finite-dimensional context (see ~\cite{PutSchm08} for a thorough overview). For instance, the first known determinacy conditions for the one-dimensional moment problem were obtained through techniques involving continued fractions (see e.g.\! ~\cite{Hamb19, Hamb20, Perr50, Stiel1894}) or using density conditions of polynomials (see e.g.\! ~\cite{Akh65, Ber96, BerChrist81, Riesz23, Sh-Tam43, Stone32}). The determinacy of the higher-dimensional moment problem is still less understood than the one-dimensional case. However, a number of sufficient multivariate determinacy conditions were developed by using polynomial and rational approximation (see e.g.\! ~\cite{Ber95,  Fug83, Naim46, Pet82, PutScheid06}).

The link between quasi-analyticity and determinacy has been known since the early days of the moment theory. In 1926, Carleman first applied quasi-analyticity to the study of the determinacy of the univariate moment problem (see ~\cite{Carl26}). More precisely, he proved that if the moment sequence $(m_n)_{n\in\NN_0}$ of a non-negative measure $\mu$ supported on $\RR$ fulfills the so-called \emph{Carleman condition}, i.e. $\sum_{n=1}^\infty m_{2n}^{-\frac{1}{2n}} =\infty$, then there is no other measure having the same moment sequence as $\mu$. His main idea was to exploit the quasi-analyticity of a certain integral transform, which intrinsically contains the moment data, to get the determinacy of the classical Hamburger moment problem (c.f.~Theorem~\ref{UniqOneDimHamb}).

The concept of quasi-analytic function was first introduced by Borel, who observed that there is a larger class of functions, than merely the analytic functions, having the property to be completely determined only by their value and the values of their derivatives at a single point (see e.g.\!~\cite{Bor17}). Motivated by the theory of partial differential equations, Hadamard proposed the problem to give necessary and sufficient conditions bearing on a sequence $(m_n)_{n\in\NN_0}$ such that the class of all infinitely differentiable functions whose $n-$th derivative is bounded by $m_n$, for each $n\in\NN_0$, is quasi-analytic,~\cite{Hadam12}.  Denjoy was the first to provide sufficient conditions~\cite{Denj21} and then Carleman, generalizing Denjoy's theorem, gave necessary and sufficient conditions. Carleman's treatise~\cite{Carl26} threw a new light on the theory of quasi-analytic functions, revealing its important role in the study of the moment problem. His ideas inspired a large series of subsequent works about quasi-analyticity criteria for functions in one variable (see e.g.\!~\cite{Ban46, Coh68, Mand42, Ostr29}) and for multivariate functions (see e.g.\!~\cite{Boch50, BochTayl39, Ehr70}).

Both in the higher-dimensional case and in the infinite-dimensional one, the operator theoretical approach is a powerful method to get not only determinacy conditions but also to guarantee the existence of a solution to the moment problems. Actually, there has always been a mutual exchange between spectral theory and moment problem, since the results in moment theory often served as a starting point for new advances in the theory of operators. The quasi-analyticity again enters in a crucial way in the analysis of the moment problem from the operator theoretical point of view.

For the one-dimensional moment problem the basic ideas developed with the traditional methods of continued fractions and orthogonal polynomials can be retrieved by means of the spectral theory of self-adjoint extensions (see e.g.\!~\cite[Chapter~4]{Akh65}). In particular, the classical Hamburger and Stieltjes existence theorems and the relative uniqueness results due to Carleman can be obtained using this approach (see~\cite{Sim98}). Note that, in the one-dimensional moment problem, the theory of quasi-analytic functions only appears in the uniqueness part via the concept of quasi-analytic vector for an operator.

In contrast to the one-dimensional case, in higher dimensions, one does not know how to prove existence without uniqueness. In fact, in dimension $d\geq 2$, we need to use the spectral theorem for several essential self-adjoint operators and this requires that the involved operators pairwise strongly commute (i.e.\!~their resolutions of identity commute). In~\cite[Theorem 6]{Nuss65}, Nussbaum proved that the strong commutativity and the essential self-adjointness can actually be derived using again the concept of quasi-analytic vectors. The so-called \emph{multivariate Carleman condition} gives a condition for the existence of a total subset of quasi-analytic vectors for the considered operators directly in terms of the starting multisequence. This yields the existence of a unique measure solving the given moment problem (see~\cite[Theorem~10]{Nuss65}). Other similar but slightly weaker results were proved before Nussbaum's theorem, using the determinacy of certain $1-$sequences derived from the starting positive semidefinite~$d-$sequence (see e.g.\!~\cite{Dev57, Esk60, Sh-Tam43}). For more recent results about partial determinacy see~\cite[Section~5]{PutSchm08}.

Despite of the fact that it is unknown how to prove the existence of a solution to the moment problem on $\RR^d$ with $d\geq 2$ without involving its determinacy, it is instead possible to use partial determinacy to conclude the determinacy of a moment $d-$sequence. Petersen actually proved a general result of this kind, showing that if all the $d$ marginal measures of a measure $\mu$ on $\RR^d$ are determinate then $\mu$ is determinate, too (see~Theorem~\ref{PetersenThm}). Another determinacy result not involving existence is due to De Jeu, who has recently proved the uniqueness part of the moment problem on $\RR^d$ and on the positive octant $\RR^d_+$ by following Carleman's path without using spectral theory,~\cite{DeJeu03}.

The operator theoretical approach is also applicable to the infinite-dimensional moment problem. In fact, several infinite-dimensional moment problems have been investigated using the theory of generalized eigenfunction expansion for self-adjoint operators (see e.g.\!~\cite{Ber59, Ber68, Ber02, BeKo88, BS71, KosMit60, {S76}}). This approach is well developed for nuclear spaces in~\cite[Chapter 8]{Ber68} and~\cite[Vol. II, Chapter~5]{BeKo88}, and it is a generalization of the method presented by Krein in~\cite{Kr46, Kr48}. In these works, Krein used the so-called method of directed functionals for self-adjoint operators instead of the spectral projection theorem for an infinite family of strongly commuting self-adjoint operators given in~\cite[Vol.~I, Chapter 3, Section 2]{BeKo88}. Further different methods to solve the moment problem on nuclear spaces were introduced in 1975 in \cite{Bor-Yng75} and in \cite{Cha-Slink75}  (see also \cite{Heg75} and \cite[Section 12.5]{Schmu90}). These approaches are essentially based on Choquet theory and decompositions of positive definite functionals on a commutative nuclear *-algebras into pure states.

We describe in this paper the infinite-dimensional moment problem on the dual $\Omega'$ of a nuclear space $\Omega$, showing that the proof scheme used to get the existence of a unique solution to the moment problem on $\RR^d$ can be carried over in this case. In fact, thanks to a certain \emph{determining condition}, it is possible to show that the family of operators associated to the starting positive semidefinite sequence has a total subset of quasi-analytic vectors. Hence, they admit unique strongly pairwise commuting self-adjoint extensions by Nussbaum's result. Therefore, by the spectral theorem for infinitely many unbounded self-adjoint operators, there exists a unique measure on~$\RR^{\NN_0}$ representing those operators. It remains to show that this measure is actually concentrated on~$\Omega'$. Note that the determining condition is the correspondent of the multivariate Carleman condition in the infinite-dimensional case. However, the infinite-dimensionality involves additional layers such as the uniformity in the index, regularity properties and growth restrictions on the moments as functions.\\

Let us outline the contents of this paper. \\
In Section~\ref{Sec-qaFunctions}, we recall the notion of quasi-analytic class of infinitely differentiable functions on $\RR$ and we introduce the famous Denjoy-Carleman theorem. We also review some different versions of the Carleman condition known in literature, pointing out the role of the log-convexity in the proof of these results. In particular, we recall the technique of the convex regularization by means of the logarithm, which is important in solving the problem of the equivalence of quasi-analytic classes. In this context, we propose a proof of the Denjoy-Carleman theorem due to Mandelbrojt, which we found interesting since it is based on completely different methods than the classical ones employing holomorphic function theory.

In Section~\ref{Sec-FinitDimMP}, we aim to show how quasi-analytic classes enter in the determinacy of the finite-dimensional moment problem. In Subsection~\ref{Subs-1DimMP}, we give an alternative and simple proof of the Carleman uniqueness results for the Hamburger and the Stieltjes one-dimensional moment problems, which exploits the quasi-analyticity of a certain Fourier-Stieltjes transform (see proof of Theorem~\ref{UniqOneDimHamb}). Moreover, we recall the importance of the geometry of the support $K$ in the determinacy of the $K-$moment problem. In Subsection~\ref{Subs-dDimMP}, we focus on the determinacy of the $d-$dimensional moment problem with $d\geq 2$. We first introduce the so-called multivariate Carleman condition \eqref{MultCarlCondFunct} and show that it is sufficient for the determinacy of the Hamburger $d-$dimensional moment problem by using a result due to Petersen. Then we sketch the proof of a recent version of the Denjoy-Carleman theorem for quasi-analytic functions in several variables, which can be used to give an alternative proof of the uniqueness result. Finally, we outline the operator theoretical approach to the Hamburger $d-$dimensional moment problem developed by Nussbaum, stressing the points where quasi-analyticity is fundamental to prove not only the uniqueness but also the existence of the solution. We also mention a uniqueness result for the $d-$dimensional version of the Stieltjes moment problem.

In Section~\ref{InftyDimMP}, we present the moment problem on conuclear spaces. We first introduce a sufficient condition for the determinacy of the analogue of the Hamburger moment problem in this infinite-dimensional setting and we prove this uniqueness result without using spectral theoretical tools. Then we review the main results by Berezansky, Kondratiev and {\v{S}}ifrin about the existence and the uniqueness of a solution for the analogues of the Hamburger and the Stieltjes moment problems on conuclear spaces. We point out that, as in the finite-dimensional case, the existence of a solution cannot be proved without using the determinacy of the moment problem and we sketch the steps of this proof where quasi-analyticity enters.

In Appendix~\ref{App-LogconvQA}, we prove some results about log-convex sequences, which are useful in relation to the quasi-analyticity of the associated classes of functions. 

\section{Characterization of quasi-analytic classes of functions on $\RR$}\label{Sec-qaFunctions}
Let us recall the basic definitions and state some preliminary results concerning the theory of quasi-analytic functions on $\RR$. In the following, we denote by $\NN_0$ the set of all non-negative integers and by $\mathcal{C}^\infty(X)$ the space of all infinitely differentiable real valued functions defined on the topological space~$X$.
\begin{definition}[The class $C\{M_n\}$]\label{DefClass}\ \\
Given a sequence of positive real numbers $(M_n)_{n\in\NN_0}$, we define the class $C\{M_n\}$ as the set of all functions $f\in\C^\infty(\RR)$ such that 
$$\left\|D^n f\right\|_\infty\leq\beta_f B_f^n M_n,\quad\forall\, n\in\NN_0$$
where $D^n f$ is the $n-$th derivative of $f$, $\left\|D^n f\right\|_\infty:=\sup_{x\in \RR}\left|D^n f(x)\right|$, and $\beta_f$, $B_f$ are positive constants only depending on $f$.
\end{definition}
\begin{definition}[Quasi-analytic class]\label{qaClass}\ \\
The class $C\{M_n\}$ of functions on $\RR$ is said to be quasi-analytic if the conditions
$$f\in C\{M_n\},\,\,(D^n f)(0)=0,\quad\forall\, n\in\NN_0$$
imply that $f(x)=0$ for all $x\in \RR$.
\end{definition}
The definition above can be given replacing $(D^n f)(0)$ with $(D^n f)(x_0)$, where $x_0$ is any other given point in the domain of the function $f$. Note that the analytic functions on $\RR$ correspond to the class $C\{n!\}$. It is obvious from the previous definitions that the following holds.
\begin{prop}\label{ConstTimesCarl}\ \\
Let $({M_n})_{n\in\NN_0}$ be a sequence of positive real numbers. $C\{M_n\}$ is quasi-analytic if and only if for any positive constant $\delta$ the class $C\{\delta M_n\}$ is quasi-analytic.
\end{prop}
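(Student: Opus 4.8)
The plan is to prove both directions of the equivalence by showing that scaling the defining sequence $(M_n)$ by a positive constant $\delta$ does not change the underlying collection of functions satisfying the growth estimate in Definition~\ref{DefClass}, hence cannot change quasi-analyticity. The essential observation is that membership in $C\{M_n\}$ is governed by the \emph{existence} of constants $\beta_f, B_f > 0$ (depending only on $f$), and not by any fixed normalization of $(M_n)$. So first I would unwind the definitions and argue that in fact $C\{M_n\} = C\{\delta M_n\}$ as sets, from which the proposition follows immediately since quasi-analyticity (Definition~\ref{qaClass}) is a property of the set of functions together with the evaluation condition $(D^n f)(0) = 0$.

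Concretely, suppose $f \in C\{M_n\}$, so that $\|D^n f\|_\infty \le \beta_f B_f^n M_n$ for all $n \in \NN_0$. Fix any $\delta > 0$. I want to exhibit new constants $\beta_f', B_f' > 0$ with $\|D^n f\|_\infty \le \beta_f' (B_f')^n (\delta M_n)$ for all $n$. The natural choice is to absorb $\delta$ into the multiplicative constants: writing $\delta M_n = \delta M_n$, I can take $\beta_f' := \beta_f/\delta$ and $B_f' := B_f$, so that $\beta_f' (B_f')^n (\delta M_n) = (\beta_f/\delta) B_f^n \delta M_n = \beta_f B_f^n M_n \ge \|D^n f\|_\infty$. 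Since $\delta > 0$, the constant $\beta_f' = \beta_f/\delta$ is again a positive constant depending only on $f$ (and $\delta$, which is fixed), so $f \in C\{\delta M_n\}$. This shows $C\{M_n\} \subseteq C\{\delta M_n\}$. The reverse inclusion follows by the identical argument applied with $\delta$ replaced by $1/\delta$, since $M_n = \tfrac{1}{\delta}(\delta M_n)$; thus the two classes coincide.

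Once the set equality $C\{M_n\} = C\{\delta M_n\}$ is established, the conclusion is essentially formal: the condition defining quasi-analyticity in Definition~\ref{qaClass} refers only to which functions lie in the class and to the vanishing of all derivatives at $0$, and neither of these ingredients is altered by replacing $(M_n)$ with $(\delta M_n)$. Hence $C\{M_n\}$ is quasi-analytic precisely when $C\{\delta M_n\}$ is, which is the claimed equivalence. I do not anticipate any real obstacle here; the only point requiring a small amount of care is the bookkeeping with the constants $\beta_f$ and $B_f$, namely making sure that the scaling factor $\delta$ is absorbed into $\beta_f$ (a constant allowed to depend on $f$) rather than into the base $B_f^n$ where it would contaminate the exponent — but this is exactly why the definition permits a free multiplicative constant $\beta_f$ in addition to the geometric factor $B_f^n$.
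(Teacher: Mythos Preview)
Your argument is correct and is precisely the ``obvious from the previous definitions'' reasoning the paper alludes to (the paper gives no explicit proof): the constant $\delta$ is absorbed into the free multiplicative constant $\beta_f$, so $C\{M_n\}=C\{\delta M_n\}$ as sets and quasi-analyticity is preserved.
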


Recall that $C\{M_n\}$ and $C\{M_n'\}$ are said to be \emph{equivalent} if there exist two constants $a,b>0$ such that $a^nM_n\leq M_n'\leq b^nM_n$ for any $n\in\NN_0$. This means that every function of either of these two classes belongs also to the other. The problem of constructing a sequence $(M_n')_{n\in\NN_0}$ in a simple relationship with a given starting sequence $(M_n)_{n\in\NN_0}$ such that the corresponding classes of functions are equivalent has extensively been studied (for more details see~\cite{Mand52}). In particular, we introduce here the so-called \emph{convex regularization of by means of the logarithm}.  
\begin{definition}[Log-convexity]\label{LogConv1}\ \\
A sequence of positive real numbers $(M_n)_{n\in\NN_0}$ is said to be log-convex if and only if for all $n \geq 1 $ we have that
$M_n^2 \leq M_{n-1} M_{n+1}$.
\end{definition}
\begin{definition}[Convex regularization by means of the logarithm]\label{LogConvReg}\ \\
Let $(M_n)_{n\in\NN}$ be a sequence of positive real numbers with $\liminf\limits_{n\to\infty} M_n^{\frac 1n}=\infty$. Define for any $r \geq 1$ the function
$
T(r):= \max\limits_{n\in\NN} \frac{r^n}{M_n}.
$
The {\it convex regularization of $(M_n)_{n\in\NN}$ by means of the logarithm} is the sequence $(M_n^c)_{n\in\NN}$ defined by
\begin{equation}\label{eqqa17}
\ln(M_n^c):= \sup_{t\geq 0} \left( nt -\ln(T(e^t)) \right),
\end{equation}
or equivalently,
$
M_n^c :=  \sup\limits_{t\geq 0} \frac{e^{tn}}{T(e^t)} = \sup\limits_{r \geq 1} \frac{r^n}{T(r)}.
$
\end{definition}
Note that (\ref{eqqa17}) means that for any $t\geq 0$ the line
$
x \mapsto tx -\ln(T(e^t))
$
is not above any of the points $(\ln(M_n^c))_{1\leq n<e^t}$.  The convex regularized sequence by means of the logarithm is indeed the largest convex minorant (i.e. the convex envelope) of the function $n \mapsto \ln(M_n)$. This means that $(M_n^c)_{n\in\NN}$ is a log-convex sequence and that for any $n\in\NN$, $M_n^c\leq M_n$. Clearly, if $(M_n)_{n\in\NN}$ is log-convex then $M_n^c\equiv M_n$ for all $n\in\NN$.

This procedure allows to explicitly construct, starting from any sequence $(M_n)_{n\in\NN}$ of positive real numbers with $\liminf\limits_{n\to\infty} M_n^{1/n}=\infty$, a log-convex sequence $(M^c_n)_{n\in\NN}$ such that the classes $C\{M_n\}$ and $C\{M^c_n\}$ are equivalent (see~\cite[Theorem~6.5.III]{Mand52}). Therefore, if $\liminf\limits_{n\to\infty} M_n^{1/n}=\infty$, then the class $C\{M_n\}$ is quasi-analytic if and only if $C\{M^c_n\}$ is quasi-analytic.
\begin{remark}\label{RemarkLimInf} \ \\
When we deal with quasi-analytic classes the assumption $\liminf\limits_{n\to\infty} M_n^{\frac 1n}=\infty$ is not restrictive, but actually gives the only interesting case. In fact, if $\liminf\limits_{n\to\infty} M_n^{\frac 1n}=0$, then  $C\{M_n\}$ is equivalent to $C\{0\}$ (which contains only the constants) and if $0<\liminf\limits_{n\to\infty} M_n^{\frac 1n}<\infty$, then $C\{M_n\}$ is equivalent to $C\{1\}$ (see~\cite[Theorem~6.5.III]{Mand52}). In both cases, $C\{M_n\}$ is already quasi-analytic. 
\end{remark}

The problem to give necessary and sufficient conditions bearing on the sequence $(M_n)_{n\in\NN_0}$ such that
the class $C\{M_n\}$ is quasi-analytic was proposed by Hadamard in~\cite{Hadam12}.  Denjoy was the first to provide sufficient conditions for the quasi-analyticity of a class~\cite{Denj21}, but the problem was completely solved by Carleman, who generalized Denjoy's theorem and methods giving the first characterization of quasi-analytic classes in~\cite{Carl26}. Using the convex regularization by means of the logarithm, other conditions equivalent to Carleman's one were obtained.
\begin{theorem}[The Denjoy-Carleman Theorem]\label{DJ-C}\ \\
Let $(M_n)_{n\in\NN_0}$ be a sequence of positive real numbers. Then the following conditions are equivalent
\begin{enumerate}[(a).]
\item \label{QuasiAn} $C\{M_n\}$ is quasi-analytic,
\item\label{CarlGen} $\sum\limits_{n=1}^\infty \frac{1}{\beta_n}=\infty$ with $\beta_n:=\inf_{k\geq n}\sqrt[k]{M_k},$
\item\label{Carl} $\sum\limits_{n=1}^\infty \frac{1}{\sqrt[n]{M_n^c}}=\infty$,
\item \label{CondOstr}
$\int_1^\infty \frac{\ln(T(r))}{r^2} dr = \infty,
$
\item\label{CondMandel} $\sum\limits_{n=1}^\infty \frac{M_{n-1}^c}{M_n^c}=\infty$,
\end{enumerate}
where $({M}^c_n)_{n\in\NN}$ is the convex regularization of $({M}_n)_{n\in\NN}$ by means of the logarithm and for any $r \geq 1$ the function $T$ is given by
$
T(r):= \max\limits_{n\in\NN} \frac{r^n}{M_n}.
$
\end{theorem}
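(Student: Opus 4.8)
The plan is to split the statement into three parts: a \emph{reduction step}, a block of purely \emph{arithmetic equivalences} among (b)--(e), and the genuinely analytic equivalence with quasi-analyticity (a). First I would dispose of the degenerate case: by Remark~\ref{RemarkLimInf} we may assume $\lim_{n\to\infty}M_n^{1/n}=\infty$, since otherwise $C\{M_n\}$ is already quasi-analytic and, as $\beta_n=\inf_{k\ge n}\sqrt[k]{M_k}$ stays bounded, (b) holds as well. Under this assumption the convex regularization $(M_n^c)$ is well defined, and by the equivalence of $C\{M_n\}$ with $C\{M_n^c\}$ recalled before the statement (together with Proposition~\ref{ConstTimesCarl}) the truth of (a) is unchanged if we pass to the log-convex sequence $(M_n^c)$. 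I would also record that $T$ depends only on the convex minorant of $n\mapsto\ln M_n$, so $T$ is literally the same for $(M_n)$ and $(M_n^c)$; hence conditions (c), (d), (e) may all be read off $(M_n^c)$, whose ratios $q_n:=M_n^c/M_{n-1}^c$ are nondecreasing, while (b) will be tied to (c) by a monotone-minorant lemma (the point being that $\beta_n$ is the largest nondecreasing minorant of $\sqrt[n]{M_n}$ and that $\sum 1/\beta_n$ and $\sum 1/\sqrt[n]{M_n^c}$ share the same convergence behaviour).

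Next I would prove the equivalences among (b)--(e) by real-variable manipulations. For (d)$\Leftrightarrow$(e), substituting $r=e^{s}$ rewrites the Ostrowski integral as $\int_0^\infty \phi(s)e^{-s}\,ds$, where $\phi(s):=\ln T(e^{s})=\sup_{n}\bigl(ns-\ln M_n^c\bigr)$ is the Legendre conjugate of $n\mapsto\ln M_n^c$; this $\phi$ is convex and piecewise linear, with slope $n$ between the breakpoints $\ln q_n$ and $\ln q_{n+1}$. An integration by parts followed by Abel summation then converts $\int_0^\infty\phi(s)e^{-s}\,ds$ into $\sum_n e^{-\ln q_n}=\sum_n M_{n-1}^c/M_n^c$, giving (d)$\Leftrightarrow$(e). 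Condition (c)$\Leftrightarrow$(e) is a Carleman-inequality comparison: writing $\sqrt[n]{M_n^c}$ as the geometric mean of $q_1,\dots,q_n$ and using the monotonicity of the $q_k$, one sandwiches $\sum 1/\sqrt[n]{M_n^c}$ between constant multiples of $\sum M_{n-1}^c/M_n^c$. Together with the monotone-minorant lemma of the first paragraph this closes the loop (b)$\Leftrightarrow$(c)$\Leftrightarrow$(d)$\Leftrightarrow$(e).

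The genuinely hard part, which I expect to be the main obstacle, is the equivalence of the quasi-analyticity statement (a) with the growth conditions; since (b)--(e) are by now interchangeable, it suffices to connect (a) to any one of them. For the implication ``divergence $\Rightarrow$ quasi-analytic'' I would show that if $f\in C\{M_n\}$ has $(D^nf)(0)=0$ for all $n$, then $f\equiv0$: the classical route attaches to $f$ an auxiliary function holomorphic in a half-plane (or sector), bounded with growth governed by $T$, and invokes the fact that $\int_1^\infty \ln T(r)/r^2\,dr=\infty$ forbids a nonzero such function from vanishing to infinite order, via a Phragm\'en--Lindel\"of / harmonic-majorant argument on its logarithmic indicator (alternatively one runs Mandelbrojt's purely real-variable argument, presented separately in this paper). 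For the converse ``convergence $\Rightarrow$ not quasi-analytic'' I would carry out the Borel--Carleman construction: when $\sum M_{n-1}^c/M_n^c<\infty$, set $a_k:=M_{k-1}^c/M_k^c$ (a summable sequence) and take the function whose Fourier transform is the infinite product $\prod_{k\ge1}\frac{\sin(a_k\xi)}{a_k\xi}$, i.e.\ an infinite convolution of boxcar functions. Because $\sum_k a_k<\infty$, this yields a nonzero $\C^\infty$ function supported in $\left[-\sum_k a_k,\sum_k a_k\right]$, and log-convexity is exactly what makes its $n$-th derivative bounded by a constant times $M_n$; such a function lies in $C\{M_n\}$ and is flat at every point outside its support, contradicting quasi-analyticity. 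The two delicate points are precisely this derivative estimate for the convolution and the holomorphic majorization linking the divergence of $\int_1^\infty \ln T(r)/r^2\,dr$ to the vanishing of a flat function; these constitute the analytic heart of the theorem.
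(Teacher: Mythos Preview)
Your plan is correct and tracks the paper's proof closely. The reduction step, the arithmetic equivalences (b)--(e) via Legendre duality and Carleman's inequality, and the two analytic directions are all organized the same way; what you describe as ``infinite convolution of boxcars'' is exactly the paper's iterated-averaging construction $M(\mu_1,\ldots,\mu_n;f)$ viewed from the Fourier side, and the holomorphic/harmonic-majorant step you sketch is carried out in the paper via Lemma~\ref{lemmino} (which massages a nonzero flat function into a symmetric nonnegative one on $[0,1]$), its Laplace transform $F(z)=\int_0^1 e^{-xz}\varphi(x)\,dx$, the bound $|F(z)|\le 1/T(|z|)$, and a Poisson-integral estimate on the family of circles $|{(1-z)}/{z}|=p$ as $p\to 1$---this last device playing the role of your Phragm\'en--Lindel\"of argument.
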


Condition \eqref{CarlGen} and \eqref{Carl} are due to Carleman, \cite{Carl26} (see also~\cite{Coh68} for a simple but detailed proof). Condition \eqref{CondOstr} was instead introduced by Ostrowski in~\cite{Ostr29}, who was also the first to provide a new proof of Carleman's theorem. Moreover,  Condition~\eqref{CondMandel} was independently given by Mandelbrojt and Bang in~\cite{Mand42} and~\cite{Ban46}, respectively. 

A very nice proof of the equivalence of the conditions \eqref{CarlGen}, \eqref{Carl}, \eqref{CondOstr} and~\eqref{CondMandel} is given in~\cite[Theorem~1.8.VII]{Mand52}. For its simplicity, let us just sketch the proof that \eqref{Carl} and \eqref{CondMandel} are equivalent. By Proposition~\ref{ConstTimesCarl}, we can assume w.l.o.g. $M_0=1$ and so easily derive from Proposition~\ref{LogConvexChar}~(b) that $M_{n-1}^c\leq \left(M_n^c\right)^{1-1/n}$. Hence, \eqref{CondMandel} implies \eqref{Carl}. The converse follows by Carleman's inequality, that is, by using
$ \sum_{n=1}^\infty (a_1\cdots a_n)^{\frac 1n}\leq e\sum_{n=1}^\infty a_n$ for $a_n:=M^c_{n-1}/{M^c_n}$.\\

To complete this section, we propose the proof of the equivalence of conditions \eqref{QuasiAn}, \eqref{CondOstr} and \eqref{CondMandel} given by Mandelbrojt in~\cite{Mand52} (see in particular~Theorem~4.1.III). In contrast to Denjoy, Carleman and Ostrowski, Mandelbrojt's proof is not based on the theory of holomorphic functions but only on some considerations relative to the average values of a real function.

The equivalence of \eqref{CondOstr} and \eqref{CondMandel} easily follows by the following
\begin{equation}\label{eqMandel}
\int_1^\infty \frac{\ln(T(r))}{r^2} dr=\ln(T(1))+1+\sum_{n=1}^\infty \frac{M_n^c}{M_{n+1}^c}.
\end{equation}
As mentioned before, a detailed proof of this equality can be found in~\cite[Theorem~1.8.VII]{Mand52}. We give here just an idea of this proof. For any $t\geq 0$, denote by $N(t)$ the greatest $n\in\NN$ such that $tn-\ln(T(e^t))=\ln M_n$. Then one can easily see that 
$\ln T(e^t)-\ln T(e^{t'})=\int_t^{t'}N(s)ds$, for any $t,t'\geq 0$.
Since $N$ is a piecewise continuous function and it is monotone increasing, we can denote by~$t_k$ the points in which the function $N$ has a jump. Using Definition~\ref{LogConvReg}, we get that ${M_n^c}/{M_{n+1}^c}=t_k$ for any $N(t_k)\leq n\leq N(t_{k+1})$. Combining these two results and making some further calculations, one finally gets~\eqref{eqMandel}. 

The main ingredient used by Mandelbrojt to prove that \eqref{CondOstr} and \eqref{CondMandel} are both necessary and sufficient for quasi-analyticity is the construction of an infinitely differentiable function with compact support which belongs to the class $C\{M_n\}$. Let us preliminarily sketch such a construction. \\

For a sequence $(\gamma_n)_{n\in\NN}$ of positive constants and a function $g$ Lebesgue integrable on $[-\gamma, \gamma]$, we define 
$$
M(\gamma_1,\!\ldots,\!\gamma_n; g)(x):=\frac{1}{2^n \gamma_1\cdots\gamma_n}\!\int_{-\gamma_1}^{\gamma_1}\!\!\!\!\!\cdots\!\int_{-\gamma_n}^{\gamma_n}\!\!\!g(x+t_1 + \cdots +t_n) dt_1 \ldots dt_n.
$$
Let $(\mu_n)_{n\in\NN}$ be a sequence of positive constants with $\sum_{n=1}^\infty\mu_n=:\mu<\infty$ and let $f$ be a Lebesgue integrable function supported on $[a,b]$. For any $n\geq 1$, we set 
\[
M_n(x):= M(\mu_1,\ldots,\mu_n; f)(x),
\]
which is obviously zero outside the interval $I_n:=[a-\sum_{i=1}^n\mu_i, b+\sum_{i=1}^n\mu_i]$ and whose value is independent of the order in which the quantities $\mu_1,\ldots,\mu_n$ are taken. This definition is recursive since $M_n(x) = M(\mu_n; M_{n-1})(x)$. For any $n\in\NN$, the function $M_n(x)$ is differentiable in $I_1$ with first derivative equal to
\[
(DM_n)(x) = \frac{1}{2\mu_1} M(\mu_2,\ldots, \mu_n; f(\cdot +\mu_1)-f(\cdot - \mu_1))(x).
\]
Thus, $M_n(x)$ has first derivative uniformly bounded in $n$ and so the family $(M_n(x))_{n\in\NN}$ is equally graded continuous in $[a-\mu, b+\mu]$. Therefore, it tends uniformly to a continuous function $\psi(x)$ and so for all $x \in [a-\mu,b+\mu]$
\[
\psi(x) = \frac{1}{2\mu_1} \int_{-\mu_1}^{\mu_1} M(\mu_2,\ldots,\mu_n,\ldots; f(\cdot+t))(x) dt.
\]
Hence, the function $\psi(x)$ is infinitely differentiable on $[a-\mu,b+\mu]$ and zero outside this interval.

\proof{\it of \eqref{QuasiAn}$\Rightarrow$\eqref{CondOstr}}\ \\
By Remark~\ref{RemarkLimInf}, we can directly assume that $\liminf\limits_{n\to \infty}M_n^{\frac 1n}=\infty$. 
W.l.o.g. we can take $M_0=1$, since $C\{M_n\}$ and $C\{\frac{M_n}{M_0}\}$ coincide by Proposition~\ref{ConstTimesCarl}. Suppose $\sum_{n=1}^\infty M^c_{n}/M_{n+1}^c < \infty $ and repeat the construction above for $\mu_n := M^c_{n-1}/M_n^c$ and $f := \Ii_{[-\mu,\mu]}$ with $\mu:=\sum_{n=1}^\infty \mu_n$. Then we get that the associated limit function $\psi$ is infinitely differentiable on $[-2\mu,2\mu]$ and zero outside. As a consequence, all the derivatives of the function $\psi$ are zero at $\pm 2 \mu$. Furthermore, 
$$
\psi(0)= \lim_{n \rightarrow \infty} \frac{1}{2^n \mu_1 \ldots \mu_n}
\int_{-\mu_1}^{\mu_1} \ldots \int_{-\mu_n}^{\mu_n} \Ii_{[-\mu,\mu]}(t_1 + \ldots t_n) dt_1\ldots dt_n =1
$$
and it is easy to show that
\[
\left|(D^n\psi)(x)\right| \leq \frac{1}{\mu_1\ldots \mu_n} = M_n^c\leq M_n.
\]
In conclusion, we constructed $\psi \in C\{M_n\}$ which is not quasi-analytic. \\
\endproof

To prove that \eqref{CondMandel} implies \eqref{QuasiAn} we need the following lemma.
\begin{lemma}\label{lemmino}\ \\
Let $(M_n)_{n\in\NN_0}$ a sequence of positive real numbers such that $C\{M_n\}$ is not quasi-analytic.
Then there exists an infinite differentiable function $\varphi$ on $[0,1]$ such that
\begin{enumerate}
\item $(D^n\varphi)(0)=0$ and $(D^n\varphi)(1)=0$, $\forall n \in \mathbb{N}_0$.
\item  $\forall n \in \mathbb{N}$ and $\forall x\in[0,1]$,
$
 \left|(D^n\varphi)(x)\right| \leq M_n.
$
\item $\varphi\geq 0$ on $[0,1]$.
\item $\varphi(1-x) = \varphi(x),\,\forall x\in[0,1]$.
\end{enumerate}
\end{lemma}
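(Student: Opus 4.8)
The plan is to manufacture $\varphi$ directly from a non-trivial witness to non-quasi-analyticity by a sequence of elementary operations -- cutting, reflecting, squaring and rescaling. First I would reduce to the log-convex case. Since $C\{M_n\}$ is not quasi-analytic, Remark~\ref{RemarkLimInf} forces $\lim_{n\to\infty}M_n^{1/n}=\infty$, so the convex regularization $(M_n^c)$ of Definition~\ref{LogConvReg} is defined, is log-convex, satisfies $M_n^c\le M_n$, and generates a class equivalent to (hence, like) $C\{M_n\}$ not quasi-analytic. As $M_n^c\le M_n$, any bound $|D^n\varphi|\le M_n^c$ yields (2); thus I may assume from now on that $(M_n)$ is log-convex. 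The only feature I need is $M_kM_{n-k}\le M_0M_n$, which makes $C\{M_n\}$ an algebra via the Leibniz estimate controlling the geometric constant: if $\|D^nu\|_\infty\le\beta_uB_u^nM_n$ and likewise for $v$, then $\|D^n(uv)\|_\infty\le\beta_u\beta_vM_0(B_u+B_v)^nM_n$.

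By Definition~\ref{qaClass}, non-quasi-analyticity provides a non-trivial $f\in C\{M_n\}$ with $(D^nf)(0)=0$ for all $n$; fix $p$ with $f(p)\neq0$, and after the reflection $x\mapsto -x$ if necessary assume $p>0$, so that $a:=\sup\{x\le p:(D^nf)(x)=0\ \forall n\}$ is well defined with $0\le a<p$. Then $a$ is an infinite-order zero of $f$, no point of $(a,p]$ is, and cutting $g:=f\cdot\Ii_{[a,\infty)}$ gives a function of $C\{M_n\}$ with the same geometric base, vanishing to infinite order at $a$ and identically $0$ to the left of $a$. The key observation is that, as $(a,p]$ carries no infinite-order zero, $g$ cannot vanish identically on any $(a,a+\delta)$, so points with $g\neq0$ accumulate at $a$. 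I would then choose such a point $c$ very close to $a$ and set $\Phi(x):=g(x)^2\,g(2c-x)^2$. By construction $\Phi\ge0$, $\Phi$ is symmetric about $c$, it is supported in the interval $[a,2c-a]$ of length $\ell=2(c-a)$, it vanishes to infinite order at both endpoints, and $\Phi(c)=g(c)^4>0$, so $\Phi\not\equiv0$.

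The step I expect to be the main obstacle is upgrading mere membership in $C\{M_n\}$ to the \emph{constant-free} bound (2). Writing $b:=\limsup_n(\|D^nf\|_\infty/M_n)^{1/n}<\infty$, the product rule gives $B'_\infty:=\limsup_n(\|D^n\Phi\|_\infty/M_n)^{1/n}\le4b$, while $\mathrm{supp}\,\Phi$ sits in an interval of length $\ell=2(c-a)$. The quantity $B'_\infty\cdot\ell$ is invariant under dilations (a dilation $x\mapsto x/\sigma$ multiplies $\|D^n\Phi\|_\infty$ by $\sigma^{-n}$, hence $B'_\infty$ by $\sigma^{-1}$ and $\ell$ by $\sigma$) and under amplitude changes, so it cannot be lowered by any rescaling; it must instead be made small by the geometry of the construction. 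This is exactly what placing $c$ near $a$ achieves: taking $c-a<1/(16b)$ gives $B'_\infty\ell\le 1/2$. I then fix a base $B'>B'_\infty$ with $B'\ell<1$ and a constant $\beta'$ with $\|D^n\Phi\|_\infty\le\beta'B'^nM_n$.

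Finally I would transport $\Phi$ to $[0,1]$ by the affine change $x=a+\ell y$, which sends the symmetry center $c=a+\ell/2$ to $1/2$, and rescale the amplitude: put $\varphi(y):=\kappa\,\Phi(a+\ell y)$ with $0<\kappa\le1/\beta'$. Then $\varphi$ is $C^\infty$ on $[0,1]$, nonnegative, symmetric under $y\mapsto1-y$, with all derivatives vanishing at $0$ and $1$, giving (1), (3) and (4); and since $D^n_y\varphi=\kappa\,\ell^n(D^n\Phi)(a+\ell y)$ one gets, for every $n\ge1$,
$$\|D^n\varphi\|_\infty\le\kappa\beta'(B'\ell)^nM_n\le(B'\ell)^nM_n\le M_n,$$
where the last two inequalities use $\kappa\beta'\le1$ and $B'\ell<1$, and (recalling the first reduction) $M_n$ here is in fact $M_n^c\le M_n$. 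This delivers (2) as well, for a non-trivial $\varphi$ since $\varphi(1/2)=\kappa\,g(c)^4>0$. The two delicate points, which together explain why shrinking the support -- and not any rescaling -- is what forces the pristine constant in (2), are the accumulation of non-zeros of $g$ at $a$ and the dilation-invariance of $B'_\infty\,\ell$.
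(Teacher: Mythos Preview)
Your argument is correct and differs genuinely from the paper's. The paper does not reduce to log-convexity; instead it gains two derivative orders by a double antiderivative with a small scaling parameter $\alpha$, obtains the symmetry and the boundary flatness via the composition $f_2(x)=f_1(x-x^2)$, achieves nonnegativity by squaring, and then controls the derivatives of $f_2^2$ through its Fourier cosine expansion, finally choosing $\alpha$ small to absorb all constants into $M_n$. Your route is more geometric and arguably more transparent: after passing to $M_n^c$ so that Leibniz makes $C\{M_n\}$ an algebra, you build the symmetric nonnegative function in one stroke as $g(x)^2\,g(2c-x)^2$, and your observation that $B'_\infty\cdot\ell$ is a dilation invariant pinpoints why the constants must be killed by shrinking the support (choosing $c$ close to $a$) rather than by any rescaling. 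What the paper's approach buys is that it never needs the algebra property (hence no log-convex reduction), at the price of the Fourier-series bookkeeping; what your approach buys is a cleaner product estimate and a clearer mechanism for eliminating the geometric constant. One cosmetic point: handle the edge case $b=0$ explicitly (then $B'_\infty=0$ and any $c\in(a,p]$ with $g(c)\neq0$ works), so that the threshold $c-a<1/(16b)$ is not read literally.
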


\proof\ \\
Since a class of functions is invariant under rescaling and translation, we can assume w.l.o.g. that the functions in $C\{M_n\}$ are defined on the interval $[0,1]$. As $C\{M_n\}$ is not quasi-analytic, there exists a non-zero function $f\in C\{M_n\}$ and a point $a\in[0,1[$ such that $f$ and all its derivatives vanish at $a$ but for any $\varepsilon >0$ the function $f$ is not identically zero on $[a,a+\varepsilon]$. For any $0\leq \alpha<1-a$, let us define for $x\in[0,1]$ $$f_1(x):=\int_0^{\alpha x+a}\int_0^tf(\tau)d\tau dt,$$ then $f_1$ is not identically zero on $[0,1]$ and all its derivatives vanish at $0$. Since $f\in C\{M_n\}$, there exists $c>0$ such that for any $n\in\NN_0$ and any $x\in[0,1]$, $|(D^nf)(x)|\leq c^{n}M_{n}$. Then $|(D^nf_1)(x)|\leq \alpha^nc^{n-2}M_{n-2}$, that is, $f_1\in C\{M_{n-2}\}$.

Let $f_2(x) := f_1(x-x^2)$, then $(Df_2)(x)=(1-2x)(Df_1)(x-x^2)$. By induction, it can be easily proved that for any $n\geq 2$ we have
\begin{align}\label{conti1}
\left|(D^nf_2)(x)\right| &\leq\sum_{k=0}^{\lfloor n/2 \rfloor} \frac{n^{2k}}{k!}\sup_{y\in [0,1]}\left|(D^{n-k}f_1)(y)\right|.
\end{align}
Using Taylor formula and the fact that all derivatives of $f_1$ vanish at zero, we obtain
\be\label{conti2}
(D^{n-k}f_1)(x) = \frac{1}{(k-1)!} \int_0^x (x-t)^{k-1} (D^nf_1)(t) dt.
\ee
By \eqref{conti1} and \eqref{conti2}, we get
$$\sup_{x\in [0,1]}|(D^nf_2)(x)| \leq \alpha^nc^{n-2}M_{n-2}\sum_{k=0}^\infty\frac{n^{2k}}{(k!)^2}\leq e^{2n}\alpha^nc^{n-2}M_{n-2}.$$
Hence, $f_2$ is in the same class of $f_1$ and vanishes with all its derivatives at $0$ and at $1$.

Let us consider the function $f_3(x) :=f_2(x)^2$. We can extend $f_2$ to a periodic even function, using that $f_2$ and all its derivatives coincide at the endpoints. Hence,
$$
f_2(x) = \frac{d_0}{2} + \sum_{q=1}^\infty (-1)^q d_q \cos(2\pi qx)\,\,\,\text{and}\,\,\,
|(D^nf_2)(x)| \leq (2\pi)^n \sum_{q=1}^\infty  |d_q| q^n,
$$
where
$
d_q := 2(-1)^q \int_0^1 f_2(x) \cos(2\pi qx)dx.
$
Integrating by parts $l$ times, we get that
\be\label{boundq}
|d_q | \leq  2 (2\pi q)^{-l} e^{2l} \alpha^lc^{l-2}M_{l-2} .
\ee
Using the binomial formula for the derivative and the H\"older inequality, we obtain that there exists $C>0$ such that
\[
|(D^nf_3)(x)| \leq (4\pi)^n C\sum_{q=1}^\infty |d_q|q^n .  
\]
Furthermore, by \eqref{boundq} for $l=n+2$, we get that
$$
|(D^nf_3)(x)| \leq L \left(\sum_{q=1}^\infty \frac{1}{q^{2}}\right)(2c\alpha e^2)^n  M_n,
$$
where
$L:=2C(2\pi)^{-2}(\alpha e^2)^2.$ If we choose $\alpha<\frac{1}{2ce^2}$, then the function we are looking for is given by $\varphi(x):=\frac{f_3(x)}{L\left(\sum\limits_{q=1}^\infty \frac{1}{q^{2}}\right)}$.\\
\endproof

\proof {\it of \eqref{CondMandel}$\Rightarrow$\eqref{QuasiAn}}\ \\
Let us show that if $C\{M_n\}$ is not quasi-analytic then 
$
\int_1^\infty \frac{\ln(T(r))}{r^2} dr < \infty.
$
By Remark~\ref{RemarkLimInf}, we can again assume that $\liminf\limits_{n\to \infty}M_n^{\frac 1n}=\infty$.
Let $f$ be a function on $[0,1]$ as given by Lemma~\ref{lemmino} and define
\[
F(z) := \int_0^1 e^{-xz} f(x)dx,\quad z\in\CC, 
\]
which is an entire function with $F(1)>0$. Using integration by parts $k$ times and the fact that $f$ vanishes with all its derivatives at $0$ and $1$, we get that
$
|F(z)| \leq \frac{M_k}{|z|^k}.
$
Since this holds for all $k\in\NN$ and $\liminf\limits_{n \to\infty} M_n^{1/n} = \infty $, we get
\be\label{boundF}
|F(z)| \leq \frac{1}{\max\limits_{k \in\NN}\frac{|z|^k}{M_k}}=\frac{1}{T(|z|)}.
\ee
Let $0<p<1$ and let us consider the circle $C_p$ given by the equation $\left|\frac{1-z}{z}\right|=p$. Using the Poisson integral formula and the properties of $F$, it is possible to prove that 
$$\frac{1}{2p\pi}\int_{C_p}\frac{\ln|F(z)|}{|z|^2}d|z|\geq\ln|F(1)|.$$
By using \eqref{boundF} in the latter equation, we get that
$$\frac{1}{2p\pi}\int_{C_p}\frac{\ln(T(|z|))}{|z|^2}d|z|\leq -\ln|F(1)|.$$
If we denote by $C_p^t$ the part of $C_p$ between the lines $Im(z)=-t$ and $Im(z)=t$ which contains the point $\frac{1}{p+1}$, then for large values of $t$ we have 
$$\frac{1}{2p\pi}\int_{C_p^t}\frac{\ln(T(|z|))}{|z|^2}d|z|\leq -\ln|F(1)|.$$
If $p\to 1$, then $C_p^t$ tends to the segment of the straight line $Re(z)=\frac 12$ with $-t<Im(z)<t$, which yields 
$$\frac{1}{\pi}\int_0^t\frac{\ln(T(r))}{\frac 14+r^2}dr\leq -\ln(F(1)).$$
As a consequence, the integral 
$\int_1^\infty\frac{\ln(T(r))}{r^2}dr<\infty.$\\
\endproof

\section{Uniqueness in the finite-dimensional moment problem}\label{Sec-FinitDimMP}
\subsection{The finite-dimensional moment problem}\ \\
Let $\RR[\x]$ be the algebra of all real polynomials with $d$ real variables and real coefficients. For $\alpha:=(\alpha_1,\dots,\alpha_d)\in\NN_0^d$ and $\x:=(x_1,\dots,x_d)\in\RR^d$, we define the following multi-index notation 
$\textbf{x}^{\alpha}:=x_1^{\alpha_1}\cdots x_d^{\alpha_d}$ (where $x_j^0:=1$) and $\left|\alpha\right|:=\alpha_1+\dots+\alpha_d$. Let $\mathcal{M}^*(\RR^d)$ be the collection of all non-negative Borel measures on $\RR^d$ such that $\x^\alpha\in L^1(\mu)$ for all $\alpha\in\NN_0^d$. 
\begin{definition}\ \\
Let $\mu\in\mathcal{M}^*(\RR^d)$ and $\alpha\in\NN_0^d$. The $\alpha^{\text{th}}-$\emph{moment} of $\mu$ is defined by
\begin{equation*}
	m_{\alpha}^{\mu}:=
	\int_{\RR^d}{\x^\alpha\,\mu(d\x)}=\int_{\RR^d}x_1^{\alpha_1}x_2^{\alpha_2}\cdots x_d^{\alpha_d}\mu(dx_1,dx_2,\dots,dx_d).
\end{equation*}
The multisequence $(m_{\alpha}^{\mu})_{\alpha\in\NN_0^d}$ is called \emph{moment sequence of $\mu$}. 
\end{definition}
Note that the set $\mathcal{M}^*(\RR^d)$ exactly consists of all the non-negative Borel measures on $\RR^d$ with finite moments of all orders. Given a closed subset $K\subseteq\RR^d$, we denote by $\mathcal{M}^*(K)$ the set of all measures in $\mathcal{M}^*(\RR^d)$ having support contained in~$K$. 

The $K-$moment problem asks to determine when a given multisequence is actually the moment sequence of some measure $\mu\in\mathcal{M}^*(K)$. 
\begin{probl}[Full $K-$moment problem]\label{KMP}\ \\
Let $m=(m_{\alpha})_{\alpha\in\NN_0^d}$ be a multisequence of real numbers and let $K\subseteq\RR^d$ be closed. Find a measure $\mu\in\mathcal{M}^*(K)$ such that
$m_{\alpha}=m_{\alpha}^{\mu}$ for all $\alpha\in\NN_0^d.$
\end{probl}
If such a measure $\mu$ does exist we say that the sequence $m$ is \emph{realized} by $\mu$ on $K$ and the measure $\mu$ is called \emph{realizing measure} on $K$. Note that we refer to this moment problem as \emph{finite-dimensional} since the dimension of the supporting set $K$ is finite. Recall that if $m$ is a finite sequence then the $K-$moment problem is called \emph{truncated}.

The statement of Problem~\ref{KMP} includes all the classical one-dimensional cases. In fact, if $d=1$, then we get
\begin{itemize}
\item The \emph{Hamburger moment problem} for $K=\RR$.
\item The \emph{Stieltjes moment problem} for $K=\RR_+$.
\item The \emph{Hausdorff moment problem} for $K=[0,1]$.
\end{itemize}
It is easy to see that the $K-$moment problem can be restated in terms of integral representation of linear functionals by introducing the so-called \emph{Riesz functional}.
\begin{definition}[Riesz' functional]\label{Lm}\ \\
Given $m=(m_{\alpha})_{\alpha\in\NN_0^d}$, we define the associated Riesz functional $L_m$ on $\RR[\x]$ by 
$
L_m(\x^{\alpha}):= m_{\alpha},
$ $\alpha\in\NN_0^d$.
\end{definition}

A necessary condition for a sequence of real numbers to be the moment sequence of some measure in $\mathcal{M}^*(\RR^d)$  is the following.
\begin{definition}[Positive semidefinite sequence]\label{PSD}\ \\
A sequence $m=(m_{\alpha})_{\alpha\in\NN_0^d}$ of real numbers is said to be \emph{positive semidefinite} if for any $n\in\NN$, $\alpha_1,\ldots,\alpha_n\in\NN_0^d$ and $\xi_1,\ldots,\xi_n\in\RR$, 
$$\sum_{j,l=1}^n m_{\alpha_j+\alpha_l}\xi_j\xi_l\geq 0,$$
or equivalently, for any $h\in\RR[\x]$,
$L_m(h^2)\geq 0.$
\end{definition}
In the case of the Hamburger moment problem, i.e.\! when $d=1$ and $K=\RR$, the positive semidefiniteness is also sufficient, but this is not true when $K=\RR^d$ with $d\geq 2$.

A measure $\mu\in\mathcal{M}^*(K)$ is called \emph{determinate} if any other measure $\nu\in\mathcal{M}^*(K)$ having the same moment sequence as $\mu$ is equal to $\mu$. Equivalently, a sequence of real numbers is called \emph{determining} on $K$ if there exists a unique non-negative measure in $\mathcal{M}^*(K)$ realizing $m$. In this case, the $K$-moment problem is also addressed as determinate.

\subsection{Determinacy conditions in the one-dimensional case}\label{Subs-1DimMP}\ \\
As far as we know, Carleman was the first to approach the determinacy question with methods involving quasi-analyticity theory. In fact, in his famous work of 1926, he proposed the following result which gives a sufficient condition for the uniqueness of the solution to the Hamburger moment problem (see ~\cite[Chapter~VIII]{Carl26}).
\begin{theorem}\label{UniqOneDimHamb}\ \\
Let $\mu,\nu\in\mathcal{M}^*(\RR)$ have the same moment sequence $m=(m_n)_{n\in\NN_0}$. If~$m$ is such that
\begin{equation}\label{CarlCond}
\sum_{n=1}^\infty \frac{1}{\sqrt[2n]{m_{2n}}} = \infty,
 \end{equation}
then $\mu=\nu$.
\end{theorem}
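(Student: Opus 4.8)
The plan is to reduce the statement to the quasi-analyticity of the Fourier--Stieltjes transform of the signed measure $\sigma:=\mu-\nu$, and then to invoke the Denjoy--Carleman Theorem~\ref{DJ-C}. Set $\omega:=\mu+\nu$, which is a finite non-negative measure (finiteness because $\mu,\nu\in\mathcal{M}^*(\RR)$ have a finite zeroth moment) with all moments finite, and define
$$
h(t):=\int_\RR e^{itx}\,\sigma(dx),\qquad t\in\RR.
$$
First I would check that $h$ is well defined and of class $\C^\infty$: since $|x|^n\in L^1(\omega)$ for every $n$, differentiation under the integral sign is justified by dominated convergence, giving $h^{(n)}(t)=\int_\RR (ix)^n e^{itx}\,\sigma(dx)$. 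Two facts then drop out: evaluating at $t=0$ yields $h^{(n)}(0)=i^n\big(m_n^\mu-m_n^\nu\big)=0$ for every $n$, because $\mu$ and $\nu$ share all moments; and since $|\sigma|\le\omega$,
$$
\big\|h^{(n)}\big\|_\infty\le\int_\RR |x|^n\,\omega(dx)=:M_n .
$$
Hence $h\in C\{M_n\}$ (with $\beta_h=B_h=1$).

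The next step is to show that the class $C\{M_n\}$ is quasi-analytic. I would first record that $M_{2n}=\int_\RR x^{2n}\,\omega(dx)=2\,m_{2n}$, so that $M_{2n}^{-1/(2n)}=2^{-1/(2n)}m_{2n}^{-1/(2n)}$ and, since $2^{-1/(2n)}$ stays bounded away from $0$, the hypothesis \eqref{CarlCond} is equivalent to $\sum_{n\ge1} M_{2n}^{-1/(2n)}=\infty$. I can assume $\lim_{n}M_n^{1/n}=\infty$ (otherwise $C\{M_n\}$ is already quasi-analytic by Remark~\ref{RemarkLimInf}), so the regularized sequence $M_n^c$ is defined and satisfies $M_n^c\le M_n$. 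Because all summands are non-negative, the even-indexed terms form a sub-sum of the full series, whence
$$
\sum_{n\ge1}\frac{1}{\sqrt[n]{M_n^c}}\ \ge\ \sum_{n\ge1}\frac{1}{\sqrt[n]{M_n}}\ \ge\ \sum_{n\ge1}\frac{1}{\sqrt[2n]{M_{2n}}}=\infty .
$$
This is exactly condition \eqref{Carl} of Theorem~\ref{DJ-C}, so $C\{M_n\}$ is quasi-analytic. One may alternatively note that, by the Cauchy--Schwarz inequality, $(M_n)$ is log-convex, so $M_n^c\equiv M_n$ and the two inequalities above collapse.

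Finally, I would transfer quasi-analyticity back to $h$. The class $C\{M_n\}$ consists of real-valued functions, so I apply the definition to the real and imaginary parts of $h$ separately; both lie in $C\{M_n\}$ and have all derivatives vanishing at the origin, so quasi-analyticity forces $\mathrm{Re}\,h\equiv0$ and $\mathrm{Im}\,h\equiv0$, i.e.\ $h\equiv0$. Thus the Fourier--Stieltjes transform of $\sigma=\mu-\nu$ vanishes identically, and by the injectivity of the Fourier transform on finite signed measures we conclude $\mu=\nu$. I expect the only genuinely delicate point to be the passage from the given Carleman condition on the even moments $m_{2n}$ to a Denjoy--Carleman condition for the full absolute-moment sequence $(M_n)$; the observation that the full series dominates its even-indexed part term by term (together with $M_n^c\le M_n$) makes this step essentially immediate, while the smoothness of $h$ and the differentiation under the integral sign are routine.
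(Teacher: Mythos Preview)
Your proof is correct and follows essentially the same path as the paper: show that the Fourier--Stieltjes transform of $\mu-\nu$ lies in a quasi-analytic class with all derivatives vanishing at $0$, then invoke the Denjoy--Carleman theorem and the injectivity of the transform. The only (inessential) variation is the choice of majorizing sequence: the paper uses Cauchy--Schwarz to bound the $n$-th derivative by $(c_\mu+c_\nu)\sqrt{m_{2n}}$ and works directly with the class $C\{\sqrt{m_{2n}}\}$, whereas you bound by the absolute moments $M_n=\int_\RR|x|^n\,d(\mu+\nu)$ and recover the quasi-analyticity of $C\{M_n\}$ from the even-indexed sub-sum.
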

The original proof by Carleman makes use of the Cauchy transform of the two given measures. Here, we decided to propose a slightly different proof that uses the Fourier-Stieltjes transform but maintains the same spirit of Carleman's proof. In fact, the essential strategy of both proofs is to consider the transform of the difference of the two given measures and show that it belongs to the class $C\{\sqrt{m_{2n}}\}$, which can be proved to be quasi-analytic thanks to \eqref{CarlCond}. This directly leads to the fact that the two original measures coincide and so to the determinacy of the Hamburger moment problem for~$m$. Before giving our proof of Theorem~\ref{UniqOneDimHamb}, let us observe a useful property of the moment sequences.
\begin{remark}\label{RemarkMomentsLogConvex}\ \\
The log-convexity (see Definition \ref{LogConv1}) is a necessary condition for a sequence of positive numbers to be the absolute moment sequence of some non-negative measure defined on $\RR$. More precisely, if $\mu\in\mathcal{M}^*(\RR)$, then the sequence $(M_n)_{n\in\NN_0}$ of all its absolute moments, i.e.\! $M_n=\int_\RR{|x|^n\mu(dx)}$, is log-convex. 
In fact, by Cauchy-Schwarz's inequality, we have that for any $n\in\NN$
$$
M_n^2\leq \left(\int_\RR{|x|^{n-1}\mu(dx)}\right) \left(\int_\RR{|x|^{n+1}\mu(dx)}\right)\\
= M_{n-1} M_{n+1}.
$$
It directly follows that the sequence of all even moments of a measure $\mu\in\mathcal{M}^*(\RR)$, i.e.\! $m_{2n}=\int_\RR{x^{2n}\mu(dx)}=M_{2n}$, is also log-convex.
\end{remark}

\begin{proof}\emph{of Theorem \ref{UniqOneDimHamb}}\ \\
W.l.o.g. assume that all even moments of $\mu$ are positive. In fact, if $m_{2n}=0$ for some $n\geq 1$ then $supp(\mu)\subseteq \{x\in\RR: x^{2n}=0\}=\{0\}$ and thus, the unique realizing measure is $\mu=m_0\delta_0$. By Remark \ref{RemarkMomentsLogConvex}, the sequence of all even moments $(m_{2n})_{n\in\NN_0}$ is log-convex. Hence, the sequence $(\sqrt{m_{2n}})_{n\in\NN_0}$ is also log-convex and by assumption it satisfies \eqref{CarlCond}, which can be rewritten as $$\sum_{n=1}^\infty \frac{1}{\sqrt[n]{\sqrt{m_{2n}}}} = \infty.$$ Then, by Denjoy-Carleman's Theorem \ref{DJ-C}, the class $C\{\sqrt{m_{2n}}\}$ is quasi-analytic.

Let us consider the Fourier-Stieltjes transforms of $\mu$ and~$\nu$, i.e.
$$F_\mu(t) := \int_\RR e^{-ixt} \mu(dx)\quad\text{and}\quad F_\nu(t) := \int_\RR e^{-ixt} \nu(dx),\quad t\in\RR.$$ 
The function $(F_\mu-F_\nu)\in\C^\infty(\RR)$ belongs to $C\{\sqrt{m_{2n}}\}$. In fact, since
$$\frac{d^n}{dt^n}F_\mu(t) = \int_\RR (-ix)^n e^{-ixt} \mu(dx)\quad\text{and}\quad
\frac{d^n}{dt^n}F_\nu(t) = \int_\RR (-ix)^n e^{-ixt} \nu(dx),$$
we get
$$\left|\frac{d^n}{dt^n}(F_\mu-F_\nu)(t)\right|\leq\int_\RR |x|^n \mu(dx)+\int_\RR |x|^n \nu(dx)\leq(c_{\mu}+c_{\nu})\sqrt{m_{2n}},$$
where $c_{\mu}:=\sqrt{\mu(\RR)}$, $c_{\nu}:=\sqrt{\nu(\RR)}$. Moreover, since $\mu$ and $\nu$ have the same moments, we easily get that $\frac{d^n}{dt^n}(F_\mu-F_\nu)(0)=0.$ Then the quasi-analyticity of the class $C\{\sqrt{m_{2n}}\}$ implies that the function $F_\mu-F_\nu$ is identically zero on $\RR$. Consequently, by the injectivity of the Fourier-Stieltjes transform, we have that $\mu=\nu$.\\
\end{proof}

Carleman's condition guarantees that the Hamburger moment problem is determinate unless the even moments tend to infinity quite rapidly. However, this criterion has the disadvantage to only give a sufficient condition for the moment problem to be determinate on $\RR$. Indeed, there exist Hamburger moment sequences $(m_n)_{n\in\NN_0}$ such that  $\sum_{n=1}^\infty \frac{1}{\sqrt[2n]{m_{2n}}} < \infty$ but the correspondent moment problem is determinate (see e.g.~\cite{Stoy87} for examples).

When we consider a Stieltjes moment sequence, we need to be careful in distinguishing the determinacy in the sense of Stieltjes from the one in the sense of Hamburger. Obviously, an indeterminate Stieltjes moment problem is also an indeterminate Hamburger moment problem. However, there are determinate Stieltjes moment problems which are indeterminate in the sense of Hamburger. Regarding the determinacy of the Stieltjes moment problem, we have the following sufficient criterion (see ~\cite[Chapter~ VIII]{Carl26}).
 
\begin{theorem}\label{UniqOneDimStieltjes}\ \\
Let $m=(m_n)_{n\in\NN_0}$ be the moment sequence of $\mu\in\mathcal{M}^*(\RR_+)$. If 
\begin{equation}\label{StieltCond}
\sum_{n=1}^\infty \frac{1}{\sqrt[2n]{m_{n}}} = \infty,
 \end{equation}
then $\mu$ is the unique measure in $\mathcal{M}^*(\RR_+)$ realizing $m$.
\end{theorem}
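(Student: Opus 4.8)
The plan is to reduce the Stieltjes statement to the Hamburger statement of Theorem~\ref{UniqOneDimHamb} via the classical square-root substitution, which turns a measure on $\RR_+$ into a symmetric measure on $\RR$ whose even moments are precisely the original moments. Concretely, to any $\lambda\in\mathcal{M}^*(\RR_+)$ I associate the symmetric measure $\tilde\lambda$ on $\RR$ defined on bounded Borel functions $g$ by
$$\int_\RR g(y)\,\tilde\lambda(dy):=\int_0^\infty\frac{g(\sqrt x)+g(-\sqrt x)}{2}\,\lambda(dx),$$
so that the atom at the origin is preserved with full weight and $\tilde\lambda$ is invariant under $y\mapsto -y$. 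Evaluating this identity on $g(y)=y^{k}$ shows that the odd moments of $\tilde\lambda$ vanish while $\int_\RR y^{2n}\,\tilde\lambda(dy)=\int_0^\infty x^n\,\lambda(dx)$; moreover, estimating $x^{n+1/2}\leq x^n+x^{n+1}$ yields $\int_\RR|y|^{2n+1}\,\tilde\lambda(dy)=\int_0^\infty x^{n+1/2}\,\lambda(dx)<\infty$, so that all absolute moments of $\tilde\lambda$ are finite and $\tilde\lambda\in\mathcal{M}^*(\RR)$.

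Applying this to $\mu$ and to an arbitrary competitor $\nu\in\mathcal{M}^*(\RR_+)$ with the same moment sequence $m$, I obtain two symmetric measures $\tilde\mu,\tilde\nu\in\mathcal{M}^*(\RR)$ sharing a single moment sequence: all odd moments are zero and the even ones equal $\tilde m_{2n}=m_n$. The Carleman condition \eqref{CarlCond} of Theorem~\ref{UniqOneDimHamb} for this common sequence reads
$$\sum_{n=1}^\infty \frac{1}{\sqrt[2n]{\tilde m_{2n}}}=\sum_{n=1}^\infty \frac{1}{\sqrt[2n]{m_{n}}}=\infty,$$
which is exactly hypothesis \eqref{StieltCond}. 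Hence Theorem~\ref{UniqOneDimHamb} applies to the pair $\tilde\mu,\tilde\nu$ and yields $\tilde\mu=\tilde\nu$.

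It then remains to transfer this equality back to $\mu=\nu$, and this is where I expect the only genuine subtlety, namely checking that $\lambda\mapsto\tilde\lambda$ is injective on $\mathcal{M}^*(\RR_+)$. The inverse is obtained by pushing forward under $y\mapsto y^2$: testing against $g(y)=h(y^2)$ in the defining formula gives $\int_\RR h(y^2)\,\tilde\lambda(dy)=\int_0^\infty h(x)\,\lambda(dx)$ for every bounded Borel $h$ on $[0,\infty)$, so that $\lambda$ is fully recovered from $\tilde\lambda$. Consequently $\tilde\mu=\tilde\nu$ forces $\mu=\nu$, which is the desired determinacy. Beyond this injectivity point, the care required is merely bookkeeping: verifying the moment formulas and the finiteness of all absolute moments so that $\tilde\mu,\tilde\nu$ legitimately lie in $\mathcal{M}^*(\RR)$.

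Finally, I would stress why the substitution is the right device rather than a direct argument: the proof of Theorem~\ref{UniqOneDimHamb} runs through the class $C\{\sqrt{\tilde m_{2n}}\}=C\{\sqrt{m_n}\}$, whose quasi-analyticity is governed precisely by $\sum_{n=1}^\infty(\sqrt{m_n})^{-1/n}=\sum_{n=1}^\infty m_n^{-1/(2n)}=\infty$, matching \eqref{StieltCond} exactly. A naive estimate of the Fourier--Stieltjes transform directly on $\RR_+$ would instead land in $C\{m_n\}$, whose quasi-analyticity requires the strictly stronger $\sum_{n=1}^\infty m_n^{-1/n}=\infty$; thus the half-integer exponents produced by the square root are essential. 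Here the log-convexity recorded in Remark~\ref{RemarkMomentsLogConvex}, applied to the even moments $\tilde m_{2n}=m_n$ of the symmetrized measure, is what licenses invoking the Denjoy--Carleman criterion inside Theorem~\ref{UniqOneDimHamb}.
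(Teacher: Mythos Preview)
Your proposal is correct and follows essentially the same route as the paper: symmetrize a measure on $\RR_+$ via the square-root map to obtain a symmetric measure on $\RR$ whose even moments are the original $m_n$, then invoke Theorem~\ref{UniqOneDimHamb}. The paper's proof is terser---it writes the symmetrization as $d\nu(x)=\tfrac12\bigl(\Ii_{[0,\infty)}(x)\,d\mu(x^2)+\Ii_{(-\infty,0]}(x)\,d\mu(x^2)\bigr)$ and leaves the application to a competitor and the injectivity of $\lambda\mapsto\tilde\lambda$ implicit---whereas you spell these points out; but the argument is the same.
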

Condition $\eqref{StieltCond}$ is well-know as \emph{Stieltjes' condition} since it is sufficient for the determinacy of the Stieltjes moment problem. 
\proof\ \\
Let us consider the measure $\nu$ defined on $\RR$ as follows 
$$d\nu(x):=\frac 12\left(\Ii_{[0,+\infty)}(x)d\mu(x^2)+\Ii_{(-\infty, 0]}(x)d\mu(x^2)\right).$$
Then we have that $\nu\in\mathcal{M}^*(\RR)$ and its moment sequence $q=(q_n)_{n\in\NN_0}$ is such that
$q_{2n}=m_n$ and $q_{2n+1}=0$, for all $n\in\NN_0$.
The conclusion follows by Theorem \ref{UniqOneDimHamb} applied to the sequence $q$.
\\
\endproof

This demonstrates that in the general $K-$moment problem, the geometry of $K$ deeply influences its determinateness. Another example is when~$K$ is compact. In fact, if two measures $\mu,\nu\in\mathcal{M}^*(\RR)$ have both compact support~$K$ and the same moment sequence $m$, then by the Stone-Weirstrass theorem we directly get $\mu=\nu$. However, if only one of the two measures has compact support~$K$, then we can still conclude that the correspondent $K-$moment problem for $m$ is determinate, using Carleman's theorem \ref{UniqOneDimHamb} and the following inequality  
$$
m_{2n}=\int_K x^{2n}\mu(dx)\leq\mu(K) \max_{x\in K}x^{2n} ,\quad\forall n\in\NN_0.
$$
The impact of the geometry of the support on the uniqueness of the realizing measure has been extensively treated in~\cite{PutScheid06}, where the authors proved that if $K$ is one-dimensional and virtually compact then every $K-$moment problem is determinate (see~\cite{Scheid03} and~\cite[Remark 3.4]{PutScheid06} for the notion of virtually compact set and recall that such a set is not necessarily compact~\cite[Example 6.3]{PutScheid06}). On the other hand, they showed that there exists a large class of non-virtually compact sets of dimension one which support indeterminate moment sequences. However, as far as we know, it is still open the question if for any $K$ not virtually compact it is possible to construct an indeterminate $K-$moment problem.\\

The quasi-analyticity also plays a fundamental role in the analysis of the moment problem from an operator theoretical point of view. The uniqueness results given in this section for the one-dimensional moment problem follow indeed from the quasi-analytic vectors theorem. A comprehensive exposition about the classical Hamburger and Stieltjes moment problems via methods from the self-adjoint extension theory of symmetric operators is given by Simon in~\cite{Sim98}. In this paper, we will describe the operator theoretical approach only for the higher-dimensional moment problem, because for $d\geq 2$ the quasi-analyticity is already essential to prove the existence of a solution and not only its uniqueness.

\subsection{Determinacy conditions in the multidimensional case}\label{Subs-dDimMP}
The determinacy of the $d-$dimensional moment problem for $d\geq 2$ is a more delicate question, but thanks to quasi-analyticity it is possible to get interesting results also in this case. For a detailed review about this topic, see the comprehensive work of Putinar and Schm\"udgen~\cite{PutSchm08}. The quasi-analyticity of functions in several variables has been already treated in~\cite{BochTayl39, Boch50, Ehr70}. However, we introduce here the analogue of Theorem~\ref{DJ-C} for quasi-analytic classes of functions on $\RR^d$, proposing a recent proof due to de Jeu (see ~\cite[Theorem~B.1]{DeJeu04}). 
\begin{theorem}\label{DeJeuThm}\ \\
For $j=1,\ldots, d$, let $(M_j(n))_{n\in\NN_0}$ be a sequence of positive real numbers s.t.
\be\label{MultCarlGen}
\forall\, j\in\{1,\ldots, d\},\quad\sum\limits_{n=1}^\infty \frac{1}{\beta_j(n)}=\infty\,\text{ with }\, \beta_j(n):=\inf\limits_{k\geq n}\sqrt[k]{M_j(k)}.
\ee
Let $f\in\C^\infty(\RR^d)$ and assume that there exist $A,B\geq 0$ such that for any $\alpha=(\alpha_1,\ldots,\alpha_d)\in\NN_0^d$
$$\|D^\alpha f\|_\infty\leq AB^{|\alpha|}\prod_{j=1}^dM_j(\alpha_j),$$
where $D^\alpha f$ denotes the partial derivative $\frac{\partial^{\left|\alpha\right|}}{\partial x_1^{\alpha_1}\cdots\partial x_d^{\alpha_d}}f$, $\left|\alpha\right|:=\sum\limits_{i=1}^d\alpha_i$ and $\left\|(D^\alpha f)\right\|_\infty:=\sup\limits_{\x\in\RR^d}\left|D^\alpha f(\x)\right|$. If $(D^\alpha f)(0)=0$, $\forall\alpha\in\NN_0^d$, then $f\equiv 0$ on~$\RR^d$.
\end{theorem}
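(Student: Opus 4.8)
The plan is to deduce the multivariate statement from the one-dimensional Denjoy--Carleman theorem (Theorem~\ref{DJ-C}) by an induction on the number of ``active'' coordinates, slicing $f$ along one axis at a time. The starting observation is that, for each fixed $j$, the hypothesis~\eqref{MultCarlGen} is exactly condition~\eqref{CarlGen} of Theorem~\ref{DJ-C} applied to the sequence $(M_j(n))_{n\in\NN_0}$; hence each univariate class $C\{M_j(n)\}$ is quasi-analytic, and this is the only place where the divergence assumption is invoked.

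First I would set up the induction hypothesis $P(r)$, for $0\le r\le d$: for every point of the form $\x=(x_1,\dots,x_r,0,\dots,0)$ and every multi-index $\beta$ supported on the coordinates $\{r+1,\dots,d\}$ (i.e.\ $\beta_1=\dots=\beta_r=0$), one has $(D^\beta f)(\x)=0$. The base case $P(0)$ is precisely the hypothesis $(D^\alpha f)(0)=0$ for all $\alpha\in\NN_0^d$, while the final case $P(d)$ specializes, by taking $\beta=0$, to $f\equiv 0$ on all of $\RR^d$, which is the desired conclusion.

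For the inductive step $P(r-1)\Rightarrow P(r)$, fix a multi-index $\gamma$ with $\gamma_1=\dots=\gamma_r=0$, fix $x_1,\dots,x_{r-1}\in\RR$, and consider the one-variable function $\phi(s):=(D^\gamma f)(x_1,\dots,x_{r-1},s,0,\dots,0)$ (where $e_r$ denotes the $r$-th coordinate vector). Differentiating $n$ times in $s$ gives $\phi^{(n)}(s)=(D^{\gamma+ne_r}f)(x_1,\dots,x_{r-1},s,0,\dots,0)$, and the growth bound on $f$ together with $\gamma_r=0$ yields $\|\phi^{(n)}\|_\infty\le \bigl(AB^{|\gamma|}\prod_{j\ne r}M_j(\gamma_j)\bigr)B^n M_r(n)$, so that $\phi\in C\{M_r(n)\}$. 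Moreover $\phi^{(n)}(0)=(D^{\gamma+ne_r}f)(x_1,\dots,x_{r-1},0,\dots,0)$; since $\gamma+ne_r$ still vanishes on the first $r-1$ coordinates and the base point is of the form treated at the previous stage, $P(r-1)$ forces $\phi^{(n)}(0)=0$ for every $n$. Quasi-analyticity of $C\{M_r(n)\}$ (Definition~\ref{qaClass}) then gives $\phi\equiv 0$, i.e.\ $(D^\gamma f)(x_1,\dots,x_{r-1},s,0,\dots,0)=0$ for all $s$; letting $\gamma$ and $x_1,\dots,x_{r-1}$ vary establishes $P(r)$.

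The point that requires care---and the reason the induction is phrased in terms of derivatives supported only on the still-inactive coordinates---is the following. A naive slice of an \emph{arbitrary} derivative $D^\alpha f$ along the $x_r$-axis would land in the \emph{shifted} class $C\{(M_r(\alpha_r+n))_n\}$, whose quasi-analyticity is not obviously inherited from that of $C\{M_r(n)\}$. By only ever tracking multi-indices with $\gamma_r=0$ at the moment we differentiate in the $r$-th variable, the shift disappears, since $M_r(\gamma_r+n)=M_r(n)$, and the univariate theorem applies verbatim; the compatibility of the vanishing conditions across successive stages is exactly what $P(r-1)$ supplies. I expect this bookkeeping---ensuring that the tracked multi-indices and the base point remain in the form required by the previous stage---to be the only genuine subtlety, the rest being the routine estimate on $\|\phi^{(n)}\|_\infty$.
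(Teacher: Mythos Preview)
Your proposal is correct and follows essentially the same slicing argument as the paper. The only cosmetic difference is the packaging of the induction: the paper inducts on the dimension~$d$, first applying the one-variable Denjoy--Carleman theorem in the last coordinate to show that all $x_1,\dots,x_{d-1}$-derivatives of $f$ vanish along the axis $(0,\dots,0,b)$, and then invoking the $(d{-}1)$-dimensional inductive hypothesis on each slice $\psi_b(x_1,\dots,x_{d-1}):=f(x_1,\dots,x_{d-1},b)$; your version unfolds this recursion into $d$ successive one-dimensional applications, activating coordinates from $x_1$ to $x_d$ rather than from $x_d$ to $x_1$, with your statement $P(r)$ playing the role of the intermediate conclusion one would obtain after $r$ steps of the paper's recursion.
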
 
\begin{remark}\ \\
Note that \eqref{MultCarlGen} is equivalent to require that, for each fixed $j\in\{1,\ldots,d\}$, any of the conditions (a),(c),(d),(e) in Theorem~\ref{DJ-C} is fulfilled by $(M_j(n))_{n\in\NN_0}$.
\end{remark}
\proof\ \\
For $d=1$, the result reduces to Theorem~\ref{DJ-C}. Assume that Theorem~\ref{DeJeuThm} holds for the dimension $d-1$.  For any $\alpha_1,\ldots,\alpha_{d-1}\in\NN_0$, let
$\phi_{\alpha_1,\ldots,\alpha_{d-1}}:\RR\to\RR$ be defined by
$$\phi_{\alpha_1,\ldots,\alpha_{d-1}}(b):=\frac{\partial^{\alpha_1+\cdots+\alpha_{d-1}}}{\partial x_1^{\alpha_1}\cdots\partial x_{d-1}^{\alpha_{d-1}}}f(0,\ldots, 0, b),\quad \forall b\in\RR.$$
Then, all the derivatives of $\phi_{\alpha_1,\ldots,\alpha_{d-1}}$ vanish at $0\in\RR$ by assumption. Moreover, for any $\alpha_d\in\NN_0$ and any $b\in\RR$,
\begin{equation*}
\left|\frac{{d}^{\alpha_d}}{dx_d^{\alpha_d}}\phi_{\alpha_1,\ldots,\alpha_{d-1}}(b)\right|
\leq\left(A\,B^{(\alpha_1+\cdots+\alpha_{d-1})}\prod_{j=1}^{d-1}M_j(\alpha_j)\right)B^{\alpha_d}M_d(\alpha_d).
\end{equation*}
Then by Theorem~\ref{DJ-C}, we have that $\phi_{\alpha_1,\ldots,\alpha_{d-1}}$ is identically zero on $\RR$, for arbitrary $\alpha_1,\ldots,\alpha_{d-1}\in\NN_0$. For each $b\in\RR$, define the function $\psi_b:\RR^{d-1}\to\RR$ as 
$\psi_b(x_1,\ldots, x_{d-1}):=f(x_1,\ldots, x_{d-1},b),$ for any $(x_1,\ldots,x_{d-1})\in\RR^{d-1}$. The previous argument shows that $\psi_b$ fulfills all the assumptions of Theorem~\ref{DeJeuThm} for $d-1$. By inductive assumption, for all $b\in\RR$ we have therefore that $\psi_b$ is identically zero on~$\RR^{d-1}$. Hence, $f$ is identically zero on $\RR^d$.
\endproof

Let us come back to the determinacy question for the higher-dimensional version of the classical Hamburger moment problem. Namely, we ask whether a measure $\mu\in\mathcal{M}^*(\RR^d)$, with $d\geq 2$, is uniquely determined by its moments without any restriction on its support. A fundamental sufficient criterion for uniqueness in this case was obtained by Petersen in \cite{Pet82}.  
\begin{theorem}\label{PetersenThm}\ \\
Let $\mu\in\mathcal{M}^*(\RR^d)$ with $d\geq 2$. For $j=1,\ldots, d$, let $\pi_j:\RR^d\to\RR$ be given by $\pi_j(x_1,\ldots, x_d):=x_j$ and denote by $\mu_{\pi_j}$ the $j-$th marginal measure of $\mu$, i.e. the image measure of $\mu$ under the mapping $\pi_j$. If all the marginal measures $\mu_{\pi_1},\ldots, \mu_{\pi_d}$ are determinate, then $\mu$ is determinate.
\end{theorem}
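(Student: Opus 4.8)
The plan is to reduce the statement to an equality of Fourier--Stieltjes transforms, in the spirit of the proof of Theorem~\ref{UniqOneDimHamb}, and then to establish that equality one coordinate at a time by invoking the determinacy of the marginals. Concretely, let $\nu\in\M^*(\RR^d)$ share the moment sequence of $\mu$; writing $\rho:=\mu-\nu$, all the mixed moments of the signed measure $\rho$ vanish, and by injectivity of the Fourier transform it suffices to show that $\widehat{\rho}(\mathbf t):=\int_{\RR^d}e^{i\langle\mathbf t,\x\rangle}\rho(d\x)$ is identically zero. A first observation is that each marginal $\nu_{\pi_j}$ has the same moments as $\mu_{\pi_j}$, so the assumed determinacy of $\mu_{\pi_j}$ forces $\mu_{\pi_j}=\nu_{\pi_j}$; in particular $\widehat\rho$ vanishes on every coordinate axis.

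The engine of the argument is the following one-dimensional reformulation of determinacy, which I would isolate as a lemma: if $\sigma$ is a determinate measure on $\RR$ and $h\in L^\infty(\sigma)$ satisfies $\int x^n h\,d\sigma=0$ for all $n\in\NN_0$, then $h=0$ $\sigma$-a.e. This is immediate from determinacy itself, since (taking $h$ real with $|h|\le 1$ after scaling, and treating real and imaginary parts separately) the measures $(1\pm h)\,d\sigma$ are non-negative, lie in $\M^*(\RR)$, and carry exactly the moment sequence of $\sigma$, so determinacy gives $(1+h)\,d\sigma=\sigma$, i.e. $h=0$. With this tool I would freeze all coordinates but one, say fix $\mathbf t'=(t_2,\dots,t_d)$ and regard $s\mapsto\widehat\rho(s,\mathbf t')$ as the Fourier transform in $s$ of the push-forward to the $x_1$-axis of $e^{i(t_2x_2+\cdots+t_dx_d)}\rho$. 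Because the exponential weight has modulus one and $\mu,\nu$ share the determinate marginal $\mu_{\pi_1}$, this push-forward has \emph{bounded} density with respect to $\mu_{\pi_1}$; hence, by the lemma, $\widehat\rho(\cdot,\mathbf t')\equiv 0$ as soon as all of its moments in $x_1$ vanish. The proof would then proceed by induction on the dimension, peeling off one exponential at a time and reducing at each stage to a genuinely $(d-1)$-dimensional instance of the same problem.

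The hard part, and the true content of Petersen's theorem, is precisely the verification that these push-forward measures have vanishing moments at each stage. The difficulty is a structural tension: the vanishing of a quantity such as $\int x_1^{n}e^{i(t_2x_2+\cdots+t_dx_d)}\rho(d\x)$ would follow at once from the hypothesis if the weight in the remaining variables were a \emph{monomial} $x_2^{a_2}\cdots x_d^{a_d}$, whereas the boundedness required to apply the annihilator lemma in those variables forces the weight to be an \emph{exponential}. Replacing a monomial by an exponential in one variable while monomials still sit on the others cannot be carried out by a naive finite manipulation, because the associated conditional densities cease to be bounded and no Carleman-type growth control on the marginal moments is available here. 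Overcoming this is exactly where determinacy, rather than the stronger multivariate Carleman condition \eqref{MultCarlGen}, must be exploited, through a careful limiting argument that turns monomials into exponentials one variable at a time while keeping the relevant conditional expectations bounded. I expect this coupling of the determinacy of the several marginals to be the main obstacle, and it is the step I would spend most effort making rigorous.
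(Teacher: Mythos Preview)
The paper does not prove Theorem~\ref{PetersenThm}; it only cites Petersen~\cite{Pet82} and remarks that the proof proceeds ``by density arguments on polynomials''. So there is no proof to compare against line by line, but your plan can be measured against Petersen's actual argument.

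Your overall architecture---show $\widehat\rho\equiv 0$ by swapping monomials for bounded functions one coordinate at a time---is exactly Petersen's. Your annihilator lemma is correct and, by duality, is precisely the statement that polynomials are dense in $L^1(\sigma)$ whenever $\sigma$ is determinate. However, this $L^1$ version is the source of the obstacle you flag: when you try to pass from $\int x_1^{n_1}\cdots x_{k-1}^{n_{k-1}}\,x_k^{n_k}\,x_{k+1}^{n_{k+1}}\cdots x_d^{n_d}\,d\rho=0$ to the same identity with $x_k^{n_k}$ replaced by a bounded $g_k(x_k)$, the push-forward to the $x_k$-axis has density $E[\,|x_1|^{n_1}\cdots|x_{k-1}|^{n_{k-1}}|x_{k+1}|^{n_{k+1}}\cdots\mid x_k\,]$ with respect to $\sigma_k$, and this is not in $L^\infty(\sigma_k)$, so your lemma does not apply.

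The missing ingredient, and the point of Petersen's ``density argument'', is the classical theorem of M.~Riesz: if $\sigma$ is determinate on $\RR$, then polynomials are dense in $L^2(\sigma)$ (not merely in $L^1$). With this in hand the induction is entirely routine and no delicate limiting argument is needed. Given polynomials $p_m\to g_k$ in $L^2(\sigma_k)$, Cauchy--Schwarz gives
\[
\Bigl|\int (p_m-g_k)(x_k)\,e^{i(t_1x_1+\cdots+t_{k-1}x_{k-1})}\,x_{k+1}^{n_{k+1}}\cdots x_d^{n_d}\,d\mu\Bigr|
\le \|p_m-g_k\|_{L^2(\sigma_k)}\Bigl(\int x_{k+1}^{2n_{k+1}}\cdots x_d^{2n_d}\,d\mu\Bigr)^{1/2}\!\to 0,
\]
and likewise for $\nu$; the unbounded monomial weight is absorbed because all moments of $\mu,\nu$ are finite. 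Thus Claim~$k-1\Rightarrow$ Claim~$k$ holds for every $k$, and $\widehat\rho\equiv 0$ follows. In short: your diagnosis of the difficulty is accurate, but the cure is not a new limiting mechanism---it is to strengthen your lemma from $L^1$- to $L^2$-density (Riesz), after which Cauchy--Schwarz dissolves the obstruction.
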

Petersen proved this result by density arguments on polynomials and he also showed that the converse is not true (see~\cite{Pet82} for a simple example of determinate measure for which not all marginal measures are determinate). Using Theorems~\ref{PetersenThm} and~\ref{UniqOneDimHamb}, we easily get the following.
\begin{theorem}\label{UniquenessHambMultiDim}\ \\
Let $\mu,\nu\in\mathcal{M}^*(\RR^d)$ have the same moment sequence $m=(m_{\alpha})_{\alpha\in\NN_0^d}$. If 
\be\label{MultCarlCondFunct}
  \sum\limits_{n=1}^{\infty}{L_m(x_j^{2n})}^{-\frac{1}{2n}}=\infty,\quad \forall\, j=1,\dots,d,
  \ee
then $\mu=\nu$.
\end{theorem}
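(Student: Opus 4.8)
The plan is to reduce the $d$-dimensional uniqueness statement to the one-dimensional Carleman criterion through the marginals, and then lift determinacy from the marginals back to $\mu$ by invoking Petersen's theorem. The two ingredients have already been supplied: Theorem~\ref{UniqOneDimHamb} handles each axis separately, and Theorem~\ref{PetersenThm} handles the passage from the marginals to the full measure.

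First I would identify the moment sequences of the marginals. For each fixed $j\in\{1,\dots,d\}$, the definition of image measure together with Fubini's theorem gives
\[
\int_\RR x^n\,\mu_{\pi_j}(dx)=\int_{\RR^d}x_j^n\,\mu(d\x)=L_m(x_j^n),\qquad\forall\,n\in\NN_0,
\]
and the identical computation holds for $\nu_{\pi_j}$. Since $\mu$ and $\nu$ share the moment sequence $m$, the marginals $\mu_{\pi_j}$ and $\nu_{\pi_j}$ have the same one-dimensional moment sequence $(L_m(x_j^n))_{n\in\NN_0}$, whose even terms are exactly $L_m(x_j^{2n})$. Note that $\mu_{\pi_j},\nu_{\pi_j}\in\mathcal{M}^*(\RR)$, since all axial moments of $\mu$ and $\nu$ are finite.

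Next I would observe that, for each $j$, the hypothesis \eqref{MultCarlCondFunct} is precisely the Carleman condition \eqref{CarlCond} for the one-dimensional sequence $(L_m(x_j^n))_{n\in\NN_0}$. Hence Theorem~\ref{UniqOneDimHamb} applies to the Hamburger problem on $\RR$ generated by $\mu_{\pi_j}$ and $\nu_{\pi_j}$: these two measures in $\mathcal{M}^*(\RR)$ carry the same moments and satisfy Carleman's condition, so $\mu_{\pi_j}=\nu_{\pi_j}$, and in particular each marginal $\mu_{\pi_j}$ is determinate.

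Finally, having shown that all $d$ marginals $\mu_{\pi_1},\dots,\mu_{\pi_d}$ are determinate, I would apply Petersen's Theorem~\ref{PetersenThm} to conclude that $\mu$ itself is determinate; since $\nu\in\mathcal{M}^*(\RR^d)$ has the same moment sequence as the determinate measure $\mu$, this forces $\nu=\mu$. The argument is essentially a bookkeeping chain between three results, so there is no genuine analytic obstacle; the one point deserving care is the clean identification of the marginal moments with the axial slices $L_m(x_j^n)$, which is what guarantees that hypothesis \eqref{MultCarlCondFunct} really is the one-dimensional Carleman condition for each marginal, rather than a merely formally similar statement.
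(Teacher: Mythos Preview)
Your proposal is correct and follows exactly the route indicated in the paper: apply Theorem~\ref{UniqOneDimHamb} to each marginal $\mu_{\pi_j}$ to obtain its determinacy from \eqref{MultCarlCondFunct}, and then invoke Petersen's Theorem~\ref{PetersenThm} to conclude that $\mu$ is determinate on $\RR^d$. Your identification of the marginal moments with $L_m(x_j^n)$ is the only detail the paper leaves implicit, and you have handled it correctly.
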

An alternative proof of Theorem \ref{UniquenessHambMultiDim} has been recently provided by de~Jeu in~\cite[Theorem~2.3]{DeJeu03}, using Theorem \ref{DeJeuThm} and the observation that \eqref{MultCarlCondFunct} implies \eqref{MultCarlGen} for the sequence $(\sqrt{M_j(2n)})_{n\in\NN_0}$ given by $M_j(h):=L_m(x_j^{h})$ for any $h\in\NN_0$. The proof by de Jeu is very close to the one of Theorem~\ref{UniqOneDimHamb}.

Condition \eqref{MultCarlCondFunct} is well-known as \emph{multivariate Carleman's condition} and it is a sharp determinacy condition for the multivariate moment problem in the following sense.
\begin{theorem}\label{CarlSharp}\ \\ 
Let $(M_n)_{n\in\NN_0}$ be a log-convex sequence of positive real numbers with $M_0=1$. Then the following are equivalent.
\begin{enumerate}
\item\label{uno} The class $C\{M_n\}$ is quasi-analytic.
\item\label{due} For any $\mu,\nu\in\mathcal{M}^*(\RR^d)$ having the same moment multisequence and such that there exists a positive constant $c$ for which
$$\max\left(\int_{\RR^d}{\|\x\|^{2n}\,\mu(d\x)},\int_{\RR^d}{\|\x\|^{2n}\,\nu(d\x)}\right)\leq c\, M_{2n},\quad\forall\,n\in\NN,$$ we have that $\mu=\nu$. (Note that $\|\cdot\|$ denotes the Euclidean norm on $\RR^d$.)
\end{enumerate} 
\end{theorem}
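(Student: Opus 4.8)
The statement is an equivalence whose two implications are of very different natures: \eqref{uno}$\Rightarrow$\eqref{due} is a determinacy statement that follows by assembling the one-dimensional machinery already in place, whereas \eqref{due}$\Rightarrow$\eqref{uno} is a sharpness statement whose proof requires \emph{constructing} an indeterminate pair, and this is where the real work lies. The plan is therefore to treat the easy implication by reduction to the marginals, and the hard one by contraposition via an explicit construction.

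For \eqref{uno}$\Rightarrow$\eqref{due} I would first translate the quasi-analyticity hypothesis into a summability condition on the prescribed bound and then reduce to the one-dimensional Carleman criterion through the marginals. Since $(M_n)_{n\in\NN}$ is log-convex with $M_0=1$, the sequence $\sqrt[n]{M_n}$ is non-decreasing (Proposition~\ref{LogConvexChar}), so $\beta_n=\inf_{k\geq n}\sqrt[k]{M_k}=\sqrt[n]{M_n}$ and Theorem~\ref{DJ-C} gives that $C\{M_n\}$ is quasi-analytic if and only if $\sum_{n\geq1}M_n^{-1/n}=\infty$; the same monotonicity shows that the even-indexed subseries is comparable to the whole series, so this is equivalent to $\sum_{n\geq1}M_{2n}^{-1/(2n)}=\infty$. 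Now for each $j$ the $j$-th marginal satisfies $\int_\RR y^{2n}\,\mu_{\pi_j}(dy)=\int_{\RR^d}x_j^{2n}\,\mu(d\x)\leq\int_{\RR^d}\|\x\|^{2n}\,\mu(d\x)\leq c\,M_{2n}$, whence $\sum_{n\geq1}\big(\int_\RR y^{2n}\mu_{\pi_j}(dy)\big)^{-1/(2n)}\geq\sum_{n\geq1}(c\,M_{2n})^{-1/(2n)}=\infty$ (the factor $c^{-1/(2n)}\to1$ being harmless); that is, each marginal obeys Carleman's condition \eqref{CarlCond} and so is determinate by Theorem~\ref{UniqOneDimHamb}. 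For $d\geq2$, Petersen's Theorem~\ref{PetersenThm} then yields that $\mu$ is determinate, and since $\nu$ shares its moments, $\mu=\nu$; for $d=1$ the conclusion is immediate from Theorem~\ref{UniqOneDimHamb}.

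For \eqref{due}$\Rightarrow$\eqref{uno} I would argue by contraposition: assuming $C\{M_n\}$ is not quasi-analytic, I must exhibit distinct $\mu,\nu\in\mathcal{M}^*(\RR^d)$ with equal moments and $\int_{\RR^d}\|\x\|^{2n}\,\mu(d\x),\int_{\RR^d}\|\x\|^{2n}\,\nu(d\x)\leq c\,M_{2n}$. The first reduction is dimensional: it suffices to build such a pair on $\RR$, since if $\mu_1\neq\nu_1$ on $\RR$ have the same moments with $\int_\RR|x|^{2n}\mu_1(dx)\leq cM_{2n}$ (and likewise for $\nu_1$), then $\mu_1\otimes\delta_0$ and $\nu_1\otimes\delta_0$ on $\RR^d$, supported on the $x_1$-axis, are distinct, carry the same moment multisequence, and satisfy $\int_{\RR^d}\|\x\|^{2n}\,(\mu_1\otimes\delta_0)(d\x)=\int_\RR|x|^{2n}\mu_1(dx)\leq cM_{2n}$. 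By Remark~\ref{RemarkLimInf} I may also assume $\lim_n M_n^{1/n}=\infty$, so that $T(r)=\sup_n r^n/M_n$ is finite and, by the failure of quasi-analyticity, $\int_1^\infty \ln(T(r))\,r^{-2}\,dr<\infty$, the negation of \eqref{CondOstr}.

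The heart of the matter, and the step I expect to be the main obstacle, is this one-dimensional construction. The aim is to produce a non-negative weight $w$ on $\RR$ whose moments match the prescribed growth, $\int_\RR|x|^{2n}w(x)\,dx$ of order $M_{2n}$, together with a real \emph{annihilating} function $g$ satisfying $\int_\RR x^n g(x)\,dx=0$ for all $n\in\NN_0$ and $|g|\leq\tfrac12 w$; then $\mu_1:=w\,dx$ and $\nu_1:=(w+g)\,dx$ are the desired distinct measures with identical moments. Using the duality $M_n=\sup_{r\geq1}r^n/T(r)$ valid for log-convex sequences, one takes $w$ comparable to $1/T(|x|)$, so that the moment estimate follows from an integral comparison. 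The delicate point is the annihilator: the existence of a nonzero $g$ with all moments vanishing and decay dominated by $w$ is exactly what the convergence of the logarithmic integral in \eqref{CondOstr} provides, through an entire-function construction in the spirit of the Carleman--Ostrowski arguments run in reverse (equivalently, via the imaginary part of a suitable canonical product, as in the classical indeterminacy examples for densities such as $e^{-|x|^{\lambda}}$ with $\lambda<\tfrac12$). I should emphasize that the difficulty here is quantitative rather than qualitative: producing \emph{some} function with vanishing moments out of a non-quasi-analytic bump is easy by Fourier inversion, but that naive route loses several indices in the moment growth, returning a bound of order $\sqrt{M_{4n}}$ or carrying a spurious geometric factor $B^{2n}$, whereas the statement insists on the clean bound $c\,M_{2n}$ with a genuine constant $c$. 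Matching the decay of $g$ to $w$ sharply enough to respect this constant is precisely the place where the full strength of the Denjoy--Carleman condition \eqref{CondOstr} must be brought to bear.
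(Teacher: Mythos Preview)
Your argument for \eqref{uno}$\Rightarrow$\eqref{due} matches the paper's: log-convexity with $M_0=1$ turns quasi-analyticity into $\sum_n M_{2n}^{-1/(2n)}=\infty$ (this is Lemma~\ref{CarlemanPari}), the hypothesis bound then forces each marginal to satisfy Carleman's condition, and Theorems~\ref{UniqOneDimHamb} and~\ref{PetersenThm} conclude. Your reduction of the converse to $d=1$ by tensoring with $\delta_0$ is also correct and is a detail the paper leaves implicit.

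For \eqref{due}$\Rightarrow$\eqref{uno} there is a genuine gap: you outline a weight-plus-annihilator scheme and explicitly leave the quantitative matching open, while dismissing the Fourier route as one that ``loses several indices'' or ``carries a spurious geometric factor $B^{2n}$''. That dismissal is the mistake. The paper (following B\'elisle--Mass\'e--Ransford) uses precisely the Fourier construction, and what makes it sharp is Lemma~\ref{lemmino}: from a non-quasi-analytic class one can extract a non-negative $\varphi$ supported in some $[a,b]\subset(0,\infty)$ with the \emph{constant-free} bound $|D^n\varphi|\leq M_n$, no $\beta_f B_f^n$ in front. One then sets $d\omega:=\operatorname{Re}\bigl(\hat\varphi^{\,2}\bigr)\,dx$. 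All moments of $\omega$ vanish because $\hat\varphi^{\,2}=\widehat{\varphi\ast\varphi}$ and $\varphi\ast\varphi$ is supported in $[2a,2b]\not\ni 0$, so every derivative of its inverse transform at $0$ is zero. For the growth bound,
\[
\int x^{2n}\,d|\omega|\;\leq\;\int\bigl|x^{n}\hat\varphi(x)\bigr|^{2}\,dx\;=\;2\pi\,\|D^{n}\varphi\|_{L^2}^{2}\;\leq\;2\pi(b-a)\,M_n^{2}\;\leq\;2\pi(b-a)\,M_{2n},
\]
the last inequality being $M_n^{2}\leq M_0 M_{2n}=M_{2n}$ by log-convexity. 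Taking $\mu:=\omega^{+}$ and $\nu:=\omega^{-}$ gives the desired distinct pair with identical moments and the clean bound $c\,M_{2n}$. So the route you set aside is exactly the one that works; the two ingredients you were missing are the sharp bump of Lemma~\ref{lemmino} and the log-convexity trade $M_n^{2}\leq M_{2n}$ that absorbs the squaring.
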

From Theorem~\ref{DJ-C}, Theorem~\ref{UniquenessHambMultiDim} and Lemma~\ref{CarlemanPari}, it easily follows that (\ref{uno}) implies~(\ref{due}) in Theorem~\ref{CarlSharp}. The converse is instead due to Belisl\'e et al. in \cite{BeMaRa97} and we sketch here the main scheme of their proof for $d=1$.

\proof{\it of (\ref{due})$\Rightarrow$(\ref{uno}) in Theorem~\ref{CarlSharp}}\ \\
Suppose that $C\{M_n\}$ is not quasi-analytic and let us take $\varphi\in C\{M_n\}$ as given by Lemma \ref{lemmino}. W.l.o.g. we can assume that the support of $\varphi$ is contained in $[a,b]$ with $0<a <b$.  
For any $A\subseteq\RR$, let us define 
\[
\omega(A):= \int_A Re (\mathcal{F}(\varphi)^2(x)) dx
\]
where $\mathcal{F}(\varphi)$ denotes the Fourier transform of $\varphi$. Then it is easy to show that for any $n \in \mathbb{N}_0$,
$$
\int x^n  d\omega(x) = D^n \mathcal{F}{\omega} (0) =0
\quad\text{and}\quad
 \int x^{2n }  |\omega|(dx)\leq \|\varphi\|_{L^1}M_{2n}.$$
By taking $\mu:=\omega^+$, $\nu:=\omega^-$ and $c:=\|\varphi\|_{L^1}$, the previous relations respectively give that $\mu$ and $\nu$ have the same moments and the following holds
\[
\max\left(\int_{\RR}{x^{2n}\,\mu(dx)},\int_{\RR^d}{x^{2n}\,\nu(dx)}\right) \leq \int x^{2n} d |\omega |(x)\leq c\,M_{2n}.
\]
\endproof
After Petersen, many other sufficient criteria for the multivariate determinacy were developed using polynomial and rational approximation (see e.g.\!~\cite{Fug83, Ber95, PutScheid06, PutSchm08}). All these results use that partial determinacy guarantees the uniqueness of the solution of the multidimensional Hamburger moment problem. However, partial determinacy can be used to prove also the existence part of the moment problem. The first results in this direction were proved by Shohat and Tamarkin in~\cite{Sh-Tam43}, by Devinatz in~\cite{Dev57} and by \`Eskin in~\cite{Esk60}. In these works the authors showed how the determinacy of certain 1-sequences derived from a semidefinite $d-$sequence $m$ ensures both the existence and the uniqueness of a realizing measure for $m$. Nussbaum in~\cite{Nuss65} not only reproved these results with different methods, but also gave the following stronger theorem, which we present here in the form given by Berg in~\cite{Berg87}.
\begin{theorem}\label{HambMultiDim}\ \\
Let $d\geq 2$ and let $m=(m_{\alpha})_{\alpha\in\NN_0^d}$ be a positive semidefinite multisequence fulfilling the multivariate Carleman condition \eqref{MultCarlCondFunct}, then there exists a unique non-negative Borel measure $\mu\in\mathcal{M}^*(\RR^d)$ realizing $m$. 
\end{theorem}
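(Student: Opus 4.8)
The plan is to follow the operator-theoretic scheme outlined in the introduction, in which quasi-analyticity enters to secure simultaneously the essential self-adjointness and the strong commutativity of a family of multiplication operators. First I would carry out a GNS-type construction. Since $m$ is positive semidefinite (Definition~\ref{PSD}), the sesquilinear form $\langle p, q\rangle := L_m(p\bar q)$ on the complexification $\CC[\x]$ of the polynomial algebra is a positive semidefinite Hermitian form, where $L_m$ is the Riesz functional of Definition~\ref{Lm}. Quotienting by the null space $\N := \{p : L_m(|p|^2)=0\}$ and completing yields a Hilbert space $\mathcal{H}$, with $\pi\colon\CC[\x]\to\mathcal{H}$ the canonical map. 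For each $j=1,\dots,d$ I would define the multiplication operator $X_j$ on the dense domain $D := \pi(\CC[\x])$ by $X_j\,\pi(p) := \pi(x_j p)$. One checks that $\N$ is invariant under multiplication by $x_j$, so each $X_j$ is well defined, that $D$ is invariant under every $X_j$, and that the $X_j$ pairwise commute on $D$; moreover each $X_j$ is symmetric since $\langle X_j\pi(p),\pi(q)\rangle = L_m(x_j p\bar q)=\langle \pi(p),X_j\pi(q)\rangle$.

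Next I would exhibit a total set of common quasi-analytic vectors for the family $\{X_j\}$. The natural candidates are the images $\pi(\x^\alpha)$ of the monomials, whose linear span is dense in $\mathcal{H}$ by construction. For the vector $\pi(\x^\alpha)$ and the operator $X_j$ one has $\|X_j^n\pi(\x^\alpha)\|^2 = L_m(\x^{2\alpha}x_j^{2n})$. A Cauchy--Schwarz estimate for the positive functional $L_m$ gives $L_m(\x^{2\alpha}x_j^{2n}) \le L_m(\x^{4\alpha})^{1/2}\,L_m(x_j^{4n})^{1/2}$, whence
\[
\sum_{n=1}^\infty \|X_j^n\pi(\x^\alpha)\|^{-1/n}
 \;\geq\; \sum_{n=1}^\infty L_m(\x^{4\alpha})^{-1/(4n)}\,L_m(x_j^{4n})^{-1/(4n)}.
\]
Because the even moments are log-convex (Remark~\ref{RemarkMomentsLogConvex}), the sequence $L_m(x_j^{2k})^{-1/(2k)}$ is monotone decreasing, so by a comparison for monotone sequences--exactly the content of Lemma~\ref{CarlemanPari}--the multivariate Carleman condition \eqref{MultCarlCondFunct} for the index $j$ forces the series $\sum_n L_m(x_j^{4n})^{-1/(4n)}$ to diverge; since the factor $L_m(\x^{4\alpha})^{-1/(4n)}\to 1$, the right-hand side above diverges as well. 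Thus each $\pi(\x^\alpha)$ is a quasi-analytic vector for every $X_j$, and we obtain a total set of common quasi-analytic vectors.

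With this in hand I would invoke Nussbaum's theorem \cite{Nuss65}: symmetric operators sharing a dense invariant domain on which they commute and admitting a total set of common quasi-analytic vectors are each essentially self-adjoint, and their closures $\overline{X_1},\dots,\overline{X_d}$ strongly commute, i.e.\ their resolutions of the identity commute. I expect this to be the crux of the argument and the main obstacle: commutativity of the $X_j$ on the algebraic domain $D$ does \emph{not} by itself yield strong commutativity of the closures (this is the phenomenon behind Nelson's classical counterexample), and it is precisely quasi-analyticity that excludes such pathologies. Granted strong commutativity, the spectral theorem for a finite family of strongly commuting self-adjoint operators provides a joint resolution of the identity $E$ on the Borel $\sigma$-algebra of $\RR^d$ with $\overline{X_j}=\int_{\RR^d} x_j\, dE(\x)$. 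Setting $\xi_0 := \pi(1)$ and $\mu(\cdot) := \langle E(\cdot)\xi_0,\xi_0\rangle$ defines a non-negative Borel measure on $\RR^d$; since $\xi_0$ lies in the domain of every power of the $\overline{X_j}$, the spectral calculus gives, for every $\alpha\in\NN_0^d$,
\[
\int_{\RR^d}\x^\alpha\,\mu(d\x)
  = \langle \overline{X_1}^{\alpha_1}\cdots\overline{X_d}^{\alpha_d}\,\xi_0,\xi_0\rangle
  = L_m(\x^\alpha) = m_\alpha .
\]
In particular all moments are finite, so $\mu\in\M^*(\RR^d)$ and $\mu$ realizes $m$.

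Finally, uniqueness is immediate from the determinacy results already established. Since $m$ satisfies the multivariate Carleman condition \eqref{MultCarlCondFunct}, Theorem~\ref{UniquenessHambMultiDim} guarantees that any two measures in $\M^*(\RR^d)$ with moment sequence $m$ coincide. Hence $\mu$ is the unique realizing measure, which completes the proof.
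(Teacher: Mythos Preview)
Your proof is correct and follows essentially the same operator-theoretic route as the paper: GNS construction, multiplication operators $X_j$ on the dense domain of (images of) polynomials, the monomials as a total invariant set of common quasi-analytic vectors, Nussbaum's theorem for essential self-adjointness and strong commutativity, the spectral theorem for existence of $\mu$, and Theorem~\ref{UniquenessHambMultiDim} for uniqueness. One small remark: Remark~\ref{RemarkMomentsLogConvex} is stated for actual measures, so at this stage the log-convexity of $k\mapsto L_m(x_j^{2k})$ should be justified directly via Cauchy--Schwarz for the positive semidefinite form $L_m$ (as the paper does); incidentally, your Cauchy--Schwarz splitting $L_m(\x^{2\alpha}x_j^{2n})\le L_m(\x^{4\alpha})^{1/2}L_m(x_j^{4n})^{1/2}$ is slightly cleaner than the paper's, since the first factor is constant in $n$ and hence Lemma~\ref{LemmadelloShift} is not needed.
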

The proof of this result uses the theory of self-adjointness extensions and makes clear that the multivariate Carleman condition is essential not only for the determinacy but also for the existence of the realizing measure. In fact, as we already mentioned above, the condition of positive semidefiniteness of $m$ solely does not imply the existence of a realizing measure on $\RR^d$ when $d\geq 2$ (see Example~6 in \cite{Schm03}). In other words, we will see that we cannot prove an equivalent of Hamburger's existence theorem for higher dimensions without assuming a further condition which guarantees that certain finitely many symmetric self-adjoint operators pairwise strongly commute. 

Before proving Theorem~\ref{HambMultiDim}, let us recall some preliminary notions and results from spectral theory. In the following, for an unbounded operator $T$ on a Hilbert space $\mathcal{H}$, we will denote by $\mathcal{D}(T)$ its domain, which we will suppose to be a dense linear subspace of $\mathcal{H}$. For the classical definitions of symmetric, self-adjoint and essentially self-adjoint operators see for example~\cite[Vol.~I, Chapter~VIII]{ReSi75}. The main tool used by Nussbaum in his proof is the concept of quasi-analytic vector that is intimately related, as we will see, to the multivariate Carleman condition and so to the quasi-analyticity of functions on $\RR^d$.
From now on we denote by $\mathcal{D}^{\infty}(T):=\bigcap\limits_{n=1}^{\infty}{\mathcal{D}(T^n)}$ and by $\mathcal{D}^{qa}(T)$ the set of all quasi-analytic vectors for $T$, i.e.  all vectors $v\in \mathcal{D}^{\infty}(T)$ such that
$\sum\limits_{n=1}^{\infty}{||T^nv||^{-\frac{1}{n}}}=\infty.
$

The motivation of Nussbaum in \cite{Nuss65} was to generalize the classical analytic vectors theorem due to Nelson (see~\cite{Nels59}) to the setting of quasi-analytic vectors. Indeed, he managed to prove this result reducing the situation to Theorem~\ref{UniqOneDimHamb}. For convenience, let us restate here Nussbaum's quasi-analytic vectors theorem (see~\cite[Theorem~2]{Nuss65} and~\cite[Theorem~7.14]{Schmu11}).
\begin{theorem}\label{NussbaumForQA}\ \\
Let $T$ be a symmetric operator on a Hilbert space $\mathcal{H}$ and suppose that its domain $\mathcal{D}(T)$ contains a total set $\mathcal{D}$ of quasi-analytic vectors, i.e. $\mathcal{D}\subseteq\mathcal{D}^{qa}(T)$ and $\overline{span(\mathcal{D})}=\mathcal{H}$. Then $T$ is essentially self-adjoint.
\end{theorem}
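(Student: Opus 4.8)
The plan is to reduce the essential self-adjointness of $T$ to the one-dimensional Hamburger uniqueness result of Theorem~\ref{UniqOneDimHamb}, in the same spirit in which Carleman's condition governs determinacy. Recall that a densely defined symmetric operator $T$ is essentially self-adjoint if and only if its deficiency spaces $\ker(T^*\mp i)$ are trivial, equivalently $\mathrm{Ran}(T\pm i)$ is dense in $\mathcal{H}$. It therefore suffices to show that any $\psi\in\mathcal{D}(T^*)$ with $T^*\psi=\pm i\,\psi$ vanishes, and since $\mathcal{D}$ is total it is enough to prove $\langle v,\psi\rangle=0$ for every $v\in\mathcal{D}$. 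I would handle one quasi-analytic vector $v$ at a time, by passing to the cyclic subspace it generates and running a moment-determinacy argument there.

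Fix $v\in\mathcal{D}$ and set $D_v:=\mathrm{span}\{T^nv:n\in\NN_0\}$, which makes sense because $v\in\mathcal{D}^{qa}(T)\subseteq\mathcal{D}^{\infty}(T)$, and let $\mathcal{H}_v:=\overline{D_v}$. As $T(T^nv)=T^{n+1}v\in D_v$, the restriction $T_v:=T|_{D_v}$ is a densely defined symmetric operator on $\mathcal{H}_v$ with cyclic vector $v$. Using the symmetry of $T$, the sequence $m_n:=\langle T_v^nv,v\rangle=\langle T^nv,v\rangle$ is real, is positive semidefinite in the sense of Definition~\ref{PSD} (indeed $\sum_{j,l}\xi_j\xi_l\,m_{j+l}=\|\sum_j\xi_jT^jv\|^2\ge 0$ since $\langle T^jv,T^lv\rangle=m_{j+l}$), and satisfies $m_{2n}=\|T^nv\|^2$. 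Consequently the quasi-analyticity of $v$, namely $\sum_n\|T^nv\|^{-1/n}=\infty$, is exactly Carleman's condition $\sum_n m_{2n}^{-1/2n}=\infty$ of \eqref{CarlCond} for the even moments of $(m_n)_{n\in\NN_0}$.

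The crux of the argument, which I expect to be the main obstacle, is to deduce that $T_v$ is essentially self-adjoint on $\mathcal{H}_v$ from this determinacy. My plan is as follows. The operator $T_v$ commutes with the natural conjugation $\sum_n\xi_nT^nv\mapsto\sum_n\bar\xi_nT^nv$ on $D_v$ (well defined and antiunitary because the $m_n$ are real, so the Gram matrix is real symmetric; equivalently $T_v$ is unitarily equivalent to a Jacobi operator), whence by von Neumann's theory its deficiency indices are equal and $T_v$ admits at least one self-adjoint extension $S$ in $\mathcal{H}_v$. For any such $S$ one has $S^nv=T^nv$, so its spectral resolution $E_S$ yields a representing measure $d\mu_S:=d\langle E_S(\lambda)v,v\rangle$ for $(m_n)$ with $\int\lambda^{2n}\,d\mu_S=\|T^nv\|^2$; since these even moments obey Carleman's condition, Theorem~\ref{UniqOneDimHamb} forces $\mu_S$ to be the same measure $\mu$ for every self-adjoint extension $S$. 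Feeding this back through the cyclic spectral theorem, two extensions $S_1,S_2$ produce unitaries $\mathcal{H}_v\to L^2(\mu)$ that both send $T^nv\mapsto\lambda^n$ on the dense set $D_v$, hence coincide; therefore $S_1=S_2$ and the self-adjoint extension of $T_v$ is unique. A symmetric operator admitting a unique self-adjoint extension has vanishing deficiency indices, so $T_v$ is essentially self-adjoint. The delicate points are the existence of at least one extension and the extension-independence of both $\mu_S$ and the intertwining unitary, so that determinacy genuinely collapses the whole family of extensions to a single operator.

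It remains to globalize. Let $P$ be the orthogonal projection of $\mathcal{H}$ onto $\mathcal{H}_v$ and put $\psi_v:=P\psi$. For $w\in D_v$ we have $Tw\in D_v\subseteq\mathcal{H}_v$, so $\langle T_vw,\psi_v\rangle=\langle Tw,\psi\rangle=\langle w,T^*\psi\rangle=\langle w,\pm i\,\psi_v\rangle$, which shows $\psi_v\in\ker(T_v^*\mp i)$. Since $T_v$ is essentially self-adjoint these deficiency spaces are trivial, giving $P\psi=0$ and in particular $\langle v,\psi\rangle=0$. As $v\in\mathcal{D}$ was arbitrary and $\overline{\mathrm{span}(\mathcal{D})}=\mathcal{H}$, we conclude $\psi=0$; hence both deficiency spaces of $T$ vanish and $T$ is essentially self-adjoint.
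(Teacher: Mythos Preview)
The paper does not give its own proof of Theorem~\ref{NussbaumForQA}; it only restates Nussbaum's result with references to \cite{Nuss65} and \cite{Schmu11}, together with the one-line remark that Nussbaum ``managed to prove this result reducing the situation to Theorem~\ref{UniqOneDimHamb}.'' So there is no detailed argument in the paper to compare against, only this hint.

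Your proposal is correct and is precisely the classical implementation of that hint. You restrict to the cyclic subspace $\mathcal{H}_v$ generated by a single quasi-analytic vector $v$, identify the moment sequence $m_n=\langle T^nv,v\rangle$ with $m_{2n}=\|T^nv\|^2$, observe that quasi-analyticity of $v$ is exactly Carleman's condition~\eqref{CarlCond}, use the real Gram matrix to build a conjugation and hence obtain equal deficiency indices for $T_v$, and then let Theorem~\ref{UniqOneDimHamb} force all self-adjoint extensions of $T_v$ to share the same spectral measure and therefore to coincide via the cyclic spectral theorem. The globalization step, projecting a deficiency vector $\psi$ onto each $\mathcal{H}_v$ and using that $T$ maps $D_v$ into itself, is the standard device for passing from essential self-adjointness on cyclic pieces to essential self-adjointness on all of $\mathcal{H}$ once a total set of such vectors is available. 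The points you flag as delicate (existence of at least one extension, and that determinacy of $\mu_S$ really collapses the family of extensions) are handled correctly.
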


However, to solve the multidimensional moment problem we need more, namely the strong commutativity of a pair of operators (see ~\cite[Theorem 6]{Nuss65} and ~\cite[Theorem 7.18]{Schmu11}).

\begin{theorem}\label{NussbaumForTWOQA}\ \\
Let $A$ and $B$ be two symmetric operators on a Hilbert space $\mathcal{H}$. Let $\mathcal{D}$ be a set of vectors in $\mathcal{H}$ which are quasi-analytic for both $A$ and $B$ and such that
$A\mathcal{D}\subset\mathcal{D}$, $B\mathcal{D}\subset\mathcal{D}$, $AB\phi=BA\phi$, for all $\phi\in\mathcal{D}$. If $\mathcal{D}$ is total in $\mathcal{H}$, then the closures $\overline{A}$ and~$\overline{B}$ are strongly commuting self-adjoint operators. Namely, for all $s,t\in\RR$, $e^{is\overline{A}}e^{it\overline{B}}=e^{it\overline{A}}e^{is\overline{B}}$.
\end{theorem}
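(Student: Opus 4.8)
The plan is to reduce strong commutativity to a two–variable quasi-analyticity statement and then invoke the several-variable Denjoy--Carleman theorem already proved above. First I would settle essential self-adjointness: by hypothesis $\mathcal{D}$ is a total set of quasi-analytic vectors for $A$ and, separately, for $B$, so Theorem~\ref{NussbaumForQA} applies to each and shows that $A$ and $B$ are essentially self-adjoint. Hence $\overline{A}$ and $\overline{B}$ are self-adjoint and generate the unitary groups $e^{is\overline{A}}$ and $e^{it\overline{B}}$. Because $\mathcal{D}$ is total and the unitaries are bounded, the identity $e^{is\overline{A}}e^{it\overline{B}}=e^{it\overline{B}}e^{is\overline{A}}$ on $\mathcal{H}$ will follow once I prove $e^{is\overline{A}}e^{it\overline{B}}\phi=e^{it\overline{B}}e^{is\overline{A}}\phi$ for every $\phi\in\mathcal{D}$ and all $s,t\in\RR$; and, by totality again, this reduces to showing that the scalar functions $g(s,t):=\langle e^{is\overline{A}}e^{it\overline{B}}\phi,\eta\rangle$ and $\tilde g(s,t):=\langle e^{it\overline{B}}e^{is\overline{A}}\phi,\eta\rangle$ coincide for all $\phi,\eta\in\mathcal{D}$.

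Both $g$ and $\tilde g$ lie in $\C^\infty(\RR^2)$, since $\phi\in\mathcal{D}^\infty(\overline{A})\cap\mathcal{D}^\infty(\overline{B})$. Writing $D^{(m,n)}:=\partial_s^m\partial_t^n$, differentiating the groups and moving the powers of the self-adjoint generators onto $\eta$ (legitimate because $\eta\in\mathcal{D}\subseteq\mathcal{D}^\infty(\overline{A})$), and using that the unitaries are isometric, one obtains the uniform bounds
\[
\left|D^{(m,n)}g(s,t)\right|\leq\|\overline{A}^m\eta\|\,\|\overline{B}^n\phi\|,\qquad \left|D^{(m,n)}\tilde g(s,t)\right|\leq\|\overline{A}^m\phi\|\,\|\overline{B}^n\eta\|.
\]
Evaluating at the origin, invariance $A\mathcal{D},B\mathcal{D}\subseteq\mathcal{D}$ together with the algebraic commutation $AB\phi=BA\phi$ (which propagates by induction to $A^mB^n\phi=B^nA^m\phi$) gives $D^{(m,n)}g(0,0)=i^{m+n}\langle A^mB^n\phi,\eta\rangle$ and $D^{(m,n)}\tilde g(0,0)=i^{m+n}\langle B^nA^m\phi,\eta\rangle$, hence the two agree. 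Therefore $h:=g-\tilde g$ is a $\C^\infty(\RR^2)$ function all of whose partial derivatives vanish at the origin.

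Now quasi-analyticity enters. Each of the sequences $(\|\overline{A}^m\eta\|)_m$, $(\|\overline{A}^m\phi\|)_m$, $(\|\overline{B}^n\phi\|)_n$, $(\|\overline{B}^n\eta\|)_n$ satisfies \eqref{MultCarlGen}: indeed $\sum_m\|\overline{A}^m\eta\|^{-1/m}=\infty$ is the very definition of a quasi-analytic vector, and since $\beta(m)=\inf_{k\geq m}\|\overline{A}^k\eta\|^{1/k}\leq\|\overline{A}^m\eta\|^{1/m}$ this already forces $\sum_m 1/\beta(m)=\infty$ (and likewise for the others). Consequently the hypotheses of the several-variable Denjoy--Carleman Theorem~\ref{DeJeuThm} are met by a product-type bound governing $h$, and since all derivatives of $h$ vanish at the origin we conclude $h\equiv 0$, i.e.\ $g\equiv\tilde g$. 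By the reduction of the first paragraph this yields the strong commutativity $e^{is\overline{A}}e^{it\overline{B}}=e^{it\overline{B}}e^{is\overline{A}}$.

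The delicate step is precisely the application of Theorem~\ref{DeJeuThm} to the single function $h=g-\tilde g$: this requires one product bound $|D^{(m,n)}h|\leq C\,M_1(m)M_2(n)$ whose coordinate sequences still satisfy \eqref{MultCarlGen}, whereas the natural estimate combines the two different product bounds above and so involves sums (or maxima) of Carleman sequences, which are not stable in general. This is where the log-convexity of the absolute-moment sequences $\|\overline{A}^m v\|^2=\int x^{2m}\,d\mu_v(x)$, with $\mu_v$ the spectral measure of $\overline{A}$ at $v$ (cf.\ Remark~\ref{RemarkMomentsLogConvex}), together with the stability results of Appendix~\ref{App-LogconvQA}, is used to render the combined sequences admissible. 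Conceptually this is exactly the indispensable role of quasi-analyticity: by Nelson's classical counterexample (see~\cite{Nels59}) the algebraic relation $AB\phi=BA\phi$ on a common dense invariant domain does \emph{not} by itself force strong commutativity, and it is the determinacy furnished by the Carleman condition---equivalently the quasi-analyticity of the associated classes through Theorem~\ref{UniqOneDimHamb} and Theorem~\ref{DeJeuThm}---that upgrades the formal commutation of $A$ and $B$ to the commutation of the spectral resolutions of $\overline{A}$ and $\overline{B}$.
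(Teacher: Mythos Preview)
Your overall strategy---essential self-adjointness from Theorem~\ref{NussbaumForQA}, then a two-variable quasi-analyticity argument via Theorem~\ref{DeJeuThm} to propagate the algebraic commutation to the unitary groups---is exactly the paper's, and your derivative computations and the evaluation at the origin are fine. The difficulty you flag in your last paragraph, however, is real and is not actually resolved: for $h=g-\tilde g$ you only get
\[
|D^{(m,n)}h(s,t)|\le \|\overline{A}^m\eta\|\,\|\overline{B}^n\phi\|+\|\overline{A}^m\phi\|\,\|\overline{B}^n\eta\|
\le 2\max(\|\overline{A}^m\eta\|,\|\overline{A}^m\phi\|)\cdot\max(\|\overline{B}^n\phi\|,\|\overline{B}^n\eta\|),
\]
and nothing in Appendix~\ref{App-LogconvQA} shows that the pointwise maximum of two log-convex Carleman sequences is again Carleman (it need not be, in general). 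So the hypothesis of Theorem~\ref{DeJeuThm} is not verified.

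The paper avoids this issue by a small but decisive change: it tests only against \emph{the same} vector, setting
\[
F_1(a,b)=\langle e^{ib\overline{B}}\phi,\,e^{-ia\overline{A}}\phi\rangle,\qquad
F_2(a,b)=\langle e^{ia\overline{A}}\phi,\,e^{-ib\overline{B}}\phi\rangle,
\]
so that the single bound $|D^{(\alpha_1,\alpha_2)}(F_1-F_2)|\le 2\,\|\overline{A}^{\alpha_1}\phi\|\,\|\overline{B}^{\alpha_2}\phi\|$ holds with one product of Carleman sequences (those of the single vector $\phi$), and Theorem~\ref{DeJeuThm} applies directly. This yields $\langle e^{ia\overline{A}}e^{ib\overline{B}}\phi,\phi\rangle=\langle e^{ib\overline{B}}e^{ia\overline{A}}\phi,\phi\rangle$ for all $\phi\in\mathcal{D}$, hence for all $\phi\in\mathcal{H}$ by totality, and then the polarization identity upgrades this diagonal equality to the full operator identity $e^{ia\overline{A}}e^{ib\overline{B}}=e^{ib\overline{B}}e^{ia\overline{A}}$. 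In your language: simply take $\eta=\phi$; the mixed-sequence obstruction disappears, and polarization at the end recovers what you wanted from arbitrary $\eta$.
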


\begin{remark}\label{commutativitysullepotenze}\ \\
Note that the hypotheses $A\mathcal{D}\subset\mathcal{D}$ and $B\mathcal{D}\subset\mathcal{D}$ guarantee that $\mathcal{D}\subset\mathcal{D}(A^nB^m)$ for any $n,m\in\NN_0$. Then it is easy to see, by induction, that the assumption $AB\phi=BA\phi$ for all $\phi\in\mathcal{D}$ implies $$
A^mB^n\phi=B^nA^m\phi,\quad\forall\, m,n\in\NN_0,\,\forall\,\phi\in\mathcal{D}.
$$ However, this is not sufficient to conclude the strong commutativity of $\overline{A}$ and~$\overline{B}$ (c.f.~\cite[Section VIII.5, Example~1]{ReSi75}). 
\end{remark}
\begin{proof}(of Theorem~\ref{NussbaumForTWOQA})\ \\
Since $\mathcal{D}\subseteq\mathcal{D}^{\text{qa}}({A})$, $\mathcal{D}\subseteq\mathcal{D}^{\text{qa}}({B})$ and $\mathcal{D}$ is total in $\mathcal{H}$, by Theorem~\ref{NussbaumForQA}, the operators $A$ and $B$ are both essentially self-adjoint, i.e. their closures $\overline{A}$ and $\overline{B}$ are self-adjoint. 
In order to show that these operators also strongly commute, we need to use quasi-analyticity of functions in two variables. \\
Given $\phi\in\mathcal{D}$, let us consider the functions
 \begin{eqnarray*} 
 F_1:&\RR^2&\to\mathbb{C}\nonumber\\
&(a,b)& \mapsto \langle e^{ib\overline{B}}\phi,e^{-ia\overline{A}}\phi\rangle
\end{eqnarray*}
and
\begin{eqnarray*} 
 F_2:&\RR^2&\to\mathbb{C}\nonumber\\
&(a,b)& \mapsto \langle e^{ia\overline{A}}\phi,e^{-ib\overline{B}}\phi\rangle\,.
\end{eqnarray*}
It is easy to show that $F_1,F_2\in\mathcal{C}^{\infty}(\RR^2)$. Moreover, for all $\alpha_1,\alpha_2\in\NN_0$ 
\begin{eqnarray*}
	\frac{\partial^{\alpha_2}}{\partial b^{\alpha_2}}\frac{\partial^{\alpha_1}}{\partial a^{\alpha_1}}F_1(a,b)
	&=&{i^{{\alpha_2}+{\alpha_1}}}\langle \overline{B}^{\alpha_2} e^{ib\overline{B}}\phi,\overline{A}^{\alpha_1}e^{-ia\overline{A}}\phi\rangle
\end{eqnarray*}
and 
	\[\frac{\partial^{\alpha_2}}{\partial b^{\alpha_2}}\frac{\partial^{\alpha_1}}{\partial a^{\alpha_1}}F_2(a,b)={i^{{\alpha_2}+{\alpha_1}}}\langle \overline{A}^{\alpha_1}e^{ia\overline{A}}\phi,\overline{B}^{\alpha_2} e^{-ib\overline{B}}\phi\rangle.
\]
\vspace{-0.15cm}\\
Hence, by Remark~\ref{commutativitysullepotenze} we get that
\begin{equation}
\frac{\partial^{\alpha_2}}{\partial b^{\alpha_2}}\frac{\partial^{\alpha_1}}{\partial a^{\alpha_1}}F_1(0,0)=\frac{\partial^{\alpha_2}}{\partial b^{\alpha_2}}\frac{\partial^{\alpha_1}}{\partial a^{\alpha_1}}F_2(0,0).
\label{derF1-F2=0}
\end{equation}
For all $\alpha_1,\alpha_2\in\NN_0$, we also have that 
\be\label{derF1-F2bound}
\left|\frac{\partial^{\alpha_2}}{\partial b^{\alpha_2}}\frac{\partial^{\alpha_1}}{\partial a^{\alpha_1}}\left(F_1-F_2\right)(a,b)\right|\leq2\,M_1({\alpha_1})M_2({\alpha_2}),
\ee
where we set for any $k\in\NN_0$
	\[M_1(k):=||\overline{A}^k\phi||\quad \text{and}\quad M_2(k):=|| \overline{B}^k\phi ||.
\] 
Both $(M_1(k))_{k\in\NN_0}$ and $(M_2(k))_{k\in\NN_0}$ are log-convex because they are defined by norms. The quasi-analyticity of $\phi$ for both $A$ and $B$ implies that
\be\label{qaPhi}
\sum\limits_{k=1}^{\infty}\frac {1}{\sqrt[k]{M_1(k)}}=\infty\quad\text{and}\quad\sum\limits_{k=1}^{\infty}\frac {1}{\sqrt[k]{M_2(k)}}=\infty.\ee
Therefore, by Theorem~\ref{DeJeuThm}, the relations \eqref{derF1-F2=0}, \eqref{derF1-F2bound} and \eqref{qaPhi} imply that the function $F_1-F_2\equiv0$ on~$\RR^2$. Then
	\begin{equation*}
	\langle e^{ib\overline{B}}\phi,e^{-ia\overline{A}}\phi\rangle
	=\langle e^{ia\overline{A}}\phi,e^{-ib\overline{B}}\phi\rangle,\quad\forall a,b\in\RR,\,\forall \phi\in \mathcal{D},
	\end{equation*}
	which also holds for all $\phi\in \mathcal{H}$, since $\mathcal{D}$ is total in $\mathcal{H}$ and the operators $e^{ia\overline{A}}$ and $e^{ia\overline{B}}$ are continuous. 
	Then the conclusion follows by polarization identity.\\
\end{proof}

\begin{proof}(of Theorem~\ref{HambMultiDim} for $d=2$)\ \\
Let $L_m$ be the Riesz functional on $\RR[\textbf{x}]=\RR[x_1,x_2]$ associated to the sequence $m$ (see~Definition~\ref{Lm}). We will apply to this functional the well-known Gelfand-Naimark-Segal (GNS) construction and then we will use the spectral theorem for pairwise strongly commuting self-adjoint extensions of the multiplication operators defined on the Hilbert space given by the GNS-construction.

Since $m$ is a positive semidefinite sequence, the bilinear form given by
$\langle f, g \rangle:=L_m(fg)$
is a quasi-inner product on $\RR[\x]$ and by the Cauchy-Schwarz inequality it follows that the subset $N:=\{h\in\RR[\textbf{x}]: L_m(h^2)=0\}$ is an ideal of the algebra of polynomials $\RR[\x]$. Let $\mathcal{H}_m$ be the completion of the pre-Hilbert space $\RR[\x]/N$ equipped with the inner product $\langle\cdot,\cdot\rangle$. For $j=1,2$, we introduce the operator
$X_j:\RR[\textbf{x}]/N\to\RR[\textbf{x}]/N$ defined by $$X_j\big(h(x_1,x_2)\big):= x_j\, h(x_1,x_2),\,\,\text{ for any }\,h\in\RR[\textbf{x}]/N.$$ Then $X_1$, $X_2$ and $\mathcal{D}:=\{x_1^sx_2^n| s,n\in\NN_0\}$ fulfill all the assumptions of Theorem~\ref{NussbaumForTWOQA}. We only show that $\mathcal{D}$ is a set of quasi-analytic vectors for both ${X_1}$ and ${X_2}$. Let us fix $s,n\in\NN_0$, then by Cauchy-Schwarz's inequality we get that for any $k\in\NN$
\be\label{CSbound}
||{X_1}^{k}x_1^sx_2^n||^2\leq\left(L_m(x_1^{4k+4s})\right)^{\frac 12}\left(L_m(x_2^{4n})\right)^{\frac 12}.
\ee
Now, let us define the sequence $M_j(k):=L_m(x_j^k)$ for $j=1,2$. The log-convexity of the sequence $(M_1(k))_{k\in\NN_0}$ easily follows by the Cauchy-Schwarz inequality for the inner product $\langle \cdot, \cdot\rangle$. By Theorem \ref{DJ-C} and Lemma~\ref{CarlemanPari}, the multivariate Carleman condition \eqref{MultCarlCondFunct} for $j=1$ guarantees that $C\{M_1(k)\}$ is quasi-analytic. By Lemma~\ref{LemmadelloShift}, we get that for the fixed $s\in\NN_0$ the class $C\{M_1(k+s)\}$ is also quasi-analytic. Then, by Lemma~\ref{CarlemanPari}, $C\{\sqrt[4]{M_1(4k+4s)}\}$ is quasi-analytic. Since $M_2(4n)$ is constant in $k$, Proposition~\ref{ConstTimesCarl} guarantees that $ \sum\limits_{k=1}^{\infty}\frac{1}{\sqrt[4k]{M_1(4k+4s)M_2(4n)}}=\infty$. This together with \eqref{CSbound} implies that
$\sum\limits_{k=1}^{\infty}{||{X_1}^{k}x_1^sx_2^n||^{-\frac{1}{k}}}=\infty,$ i.e. $x_1^sx_2^n$ is a quasi-analytic vector for $X_1$. The same proof applies to~$X_2$.

Theorem~\ref{NussbaumForTWOQA} guarantees that the closures $\overline{X_1}$ and $\overline{X_2}$ of $X_1$ and~$X_2$, respectively, are strongly commuting self-adjoint operators. By applying the spectral theorem to $\overline{X_1}$ and $\overline{X_2}$, we get that there exists a unique non-negative measure $\mu\in\mathcal{M}^*(\RR^2)$
such that for any $\alpha_1,\alpha_2\in\NN_0$
\be\label{equaz1}
	\int_{\RR^2}{x_1^{\alpha_1}x_2^{\alpha_2}\mu(dx_1,dx_2)}=\langle1,\underbrace{\overline{X_1}\cdots\overline{X_1}}_{\alpha_1\ \text{times}}\underbrace{\overline{X_2}\cdots\overline{X_2}}_{\alpha_2\ \text{times}}\cdot 1\rangle.\ee
On the other hand, we have that for any $\alpha_1,\alpha_2\in\NN_0$
\be\label{equaz2}\langle1,\underbrace{\overline{X_1}\cdots\overline{X_1}}_{\alpha_1\ \text{times}}\underbrace{\overline{X_2}\cdots\overline{X_2}}_{\alpha_2\ \text{times}}\cdot 1\rangle=\langle1,(X_1^{\alpha_1}X_2^{\alpha_2})(1)\rangle=L_m(x_1^{\alpha_1}x_2^{\alpha_2})=m_{(\alpha_1,\alpha_2)}.\ee
\vspace{-0.1cm}\\
By \eqref{equaz1} and \eqref{equaz2}, we conclude that $ \int_{\RR^2}{\x^{\alpha}\mu(d\x)}=m_{\alpha}$ for any $\alpha\in\NN_0^2$, i.e.\! the sequence $m$ is realized on $\RR^2$ by the measure $\mu$. Moreover, since $m$ fulfills~\eqref{MultCarlCondFunct} by assumption, Theorem~\ref{UniquenessHambMultiDim} for $n=2$ guarantees that $\mu$ is the unique measure realizing $m$ on $\RR^2$.\\
\end{proof}

Concerning the Stieltjes moment problem in higher dimensions, it is possible to obtain sufficient determinacy conditions using the quasi-analyticity of the Fourier-Laplace transform of a measure supported on $\RR^d_+$. This technique is used in ~\cite[Section 2.4]{PutSchm08}, where the authors proved different determinacy conditions corresponding to the different quasi-analyticity criteria given in ~\cite{BochTayl39} and ~\cite{Boch50}. Following the proof in the one-dimensional case (see Theorem~\ref{UniqOneDimStieltjes}), it is possible to derive from Theorem \ref{UniquenessHambMultiDim} the following sufficient condition for the determinacy of the multidimensional Stieltjes moment problem (see ~\cite{DeJeu03} for a detailed proof of this result).
\begin{theorem}\ \\
Let $m=(m_\alpha)_{\alpha\in\NN_0^d}$ be the moment sequence of a measure $\mu\in\mathcal{M}^*(\RR^d_+)$. If 
$$
\sum\limits_{k=1}^{\infty}{L_m(x_j^{k})}^{-\frac{1}{2k}}=\infty,\quad \forall\, j=1,\dots,d,
$$
then $\mu$ is the unique measure realizing $m$ on $\RR^d_+$.
\end{theorem}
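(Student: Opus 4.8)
The plan is to reduce the multivariate Stieltjes problem to the multivariate Hamburger problem of Theorem~\ref{UniquenessHambMultiDim}, in exact analogy with the one-dimensional reduction used for Theorem~\ref{UniqOneDimStieltjes}, by folding each coordinate through the squaring map. Suppose $\mu,\nu\in\mathcal{M}^*(\RR^d_+)$ both realize $m$. For $\epsilon=(\epsilon_1,\dots,\epsilon_d)\in\{-1,1\}^d$ write $\sigma_\epsilon(\x):=(\epsilon_1 x_1,\dots,\epsilon_d x_d)$, and let $\iota:\RR^d_+\to\RR^d$ be $\iota(\y):=(\sqrt{y_1},\dots,\sqrt{y_d})$. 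First I would symmetrize, defining the non-negative measure on $\RR^d$
\[
\tilde\mu:=\frac{1}{2^d}\sum_{\epsilon\in\{-1,1\}^d}(\sigma_\epsilon)_*(\iota_*\mu),
\]
and $\tilde\nu$ analogously. By construction each of $\tilde\mu,\tilde\nu$ is invariant under every sign flip $\sigma_\epsilon$, and finiteness of all its moments is inherited from $\mu,\nu$ (half-integer powers are dominated by integer ones), so $\tilde\mu,\tilde\nu\in\mathcal{M}^*(\RR^d)$.

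The key computation is to evaluate the moments of $\tilde\mu$. For $\alpha\in\NN_0^d$ the sign-flip invariance gives
\[
\int_{\RR^d}\x^\alpha\,\tilde\mu(d\x)=\frac{1}{2^d}\Big(\prod_{j=1}^d\big(1+(-1)^{\alpha_j}\big)\Big)\int_{\RR^d}\x^\alpha\,(\iota_*\mu)(d\x),
\]
which vanishes whenever some $\alpha_j$ is odd and otherwise equals $\int_{\RR^d_+}\y^{\alpha/2}\,\mu(d\y)=m_{\alpha/2}$. Hence the full moment multisequence of $\tilde\mu$ is determined by $m$ alone, and likewise for $\tilde\nu$; in particular $\tilde\mu$ and $\tilde\nu$ carry identical moments. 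Taking $\alpha=2n e_j$ above yields $\int_{\RR^d}x_j^{2n}\,\tilde\mu(d\x)=m_{ne_j}=L_m(x_j^{n})$, so the multivariate Carleman condition \eqref{MultCarlCondFunct} for the common Riesz functional of $\tilde\mu,\tilde\nu$ reads $\sum_{n\geq 1}L_m(x_j^n)^{-1/(2n)}=\infty$ for every $j$, which is precisely the hypothesis. Theorem~\ref{UniquenessHambMultiDim} then forces $\tilde\mu=\tilde\nu$.

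Finally, I would push forward through the squaring map $\phi(\x):=(x_1^2,\dots,x_d^2)$. Since $\phi\circ\iota=\mathrm{id}$ on $\RR^d_+$ and $\phi\circ\sigma_\epsilon=\phi$ for all $\epsilon$, the construction gives $\phi_*\tilde\mu=\mu$ and $\phi_*\tilde\nu=\nu$, so $\tilde\mu=\tilde\nu$ yields $\mu=\nu$. The only genuinely delicate point is the moment bookkeeping in the symmetrization step, namely checking that \emph{all} mixed and odd-order moments of $\tilde\mu$ are pinned down by $m$ (the odd ones vanishing by symmetry); but this is immediate from the sign-flip invariance, so the argument is purely a reduction and the substantive content rests entirely on Theorem~\ref{UniquenessHambMultiDim}.
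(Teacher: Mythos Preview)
Your proof is correct and follows exactly the approach the paper indicates (but does not write out in full): it reduces the multivariate Stieltjes problem to Theorem~\ref{UniquenessHambMultiDim} by the coordinate-wise squaring/symmetrization trick, in direct analogy with the one-dimensional reduction in Theorem~\ref{UniqOneDimStieltjes}. The moment bookkeeping, the verification that the Stieltjes hypothesis becomes the multivariate Carleman condition \eqref{MultCarlCondFunct} for the symmetrized measures, and the recovery of $\mu,\nu$ via $\phi_*$ are all carried out correctly.
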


As well as in the one-dimensional case, the geometry of the support of a measure on $\RR^d$ with $d\geq 2$ can be used to derive other determinacy conditions. First of all, the compactness of $K\subset \RR^d$ guarantees the determinacy of the multivariate $K-$moment problem for any $d\in\NN$ (the considerations made at the end of Subsection~\ref{Subs-1DimMP} can be straightforwardly generalized to higher dimensions). Moreover, in~\cite[Section 3]{PutScheid06} the authors showed higher-dimensional determinacy criteria based on the geometry of the support and provided examples of non-compact higher-dimensional sets which support determinate measures (see also~\cite[Section 9]{PutSchm08} for a summary of the results in~\cite{PutScheid06}). Another powerful method to study the determinacy of the multidimensional moment problem is to use disintegration techniques. In particular, Putinar and Schm\"udgen have recently proved through such techniques a general result which reduces the determinacy question to lower dimensions and it has a very broad class of applications,~\cite[Section 8]{PutSchm08}.

\section{Uniqueness in the infinite-dimensional moment problem}\label{InftyDimMP}
\subsection{The moment problem on conuclear spaces}\label{Sec-Prel}
In the following we are going to introduce an infinite-dimensional version of the moment problem, in particular we will consider the moment problem on conuclear spaces. For simplicity, from now on, all the spaces are assumed to be separable and real.

Let us consider a family $(H_k)_{k\in K}$ of Hilbert spaces ($K$ is an index set containing~$0$) which is directed by topological embedding, i.e. 
$$\forall\ k_1,k_2\in K\,\,\exists\, k_3\,:\, H_{k_3}\subseteq H_{k_1}\, ,\, H_{k_3}\subseteq H_{k_2}.$$
We assume that each $H_k$ is topologically embedded into $H_0$.
Let $\Omega$ be the projective limit of the family $(H_k)_{k\in K}$ endowed with the associated projective limit topology and let us assume that $\Omega$ is nuclear, i.e.\! for each $k_1\in K$ there exists $k_2\in K$ such that the embedding $H_{k_2}\subseteq H_{k_1}$ is quasi-nuclear.

Let us denote by $\Omega'$ the topological dual space of $\Omega$. We control the classical rigging by identifying $H_0$ and its dual $H'_0$. With this identification one can define the duality pairing between elements in $H_k$ and in its dual $H'_k=H_{-k}$ using the inner product in $H_0$. For this reason, in the following we will denote by $\langle f, \eta\rangle$ the duality pairing between $\eta\in\Omega'$ and $f\in\Omega$ (see~\cite{B86, BeKo88} for more details).

Consider the $n-$th ($n\in\NN_0$) symmetric tensor power $\Omega^{\otimes n}$ of the space~$\Omega$ which is defined as the projective limit of all $H_k^{\otimes n}$; for $n=0$, $H_k^{\otimes 0}=\RR$. Then its dual space is
\begin{equation}\label{tensorDual}
\left(\Omega^{\otimes n}\right)'=\bigcup_{k\in K}\left(H_k^{\otimes n}\right)'=\bigcup_{k\in K}(H'_k)^{\otimes n}=\bigcup_{k\in K}H_{-k}^{\otimes n},
\end{equation}
which we can equip with the weak topology.

A generalized process $\mu$ is a finite measure defined on the Borel $\sigma-$algebra on~$\Omega'$. Moreover, we say that a generalized process $\mu$ is \emph{concentrated on} a measurable subset $\mathcal{S}\subseteq\Omega'$ if $\mu\left(\Omega'\setminus\mathcal{S}\right)=0$. 

\begin{definition}[Finite $n-$th local moment]\ \\
Given $n\in\NN$, a generalized process $\mu$ on $\Omega'$ has \emph{finite $n-$th local moment} (or local moment of order $n$) if for every $f\in\Omega$ we have
$$\int_{\Omega'}|\langle f, \eta\rangle|^n \mu(d\eta)<\infty.$$
\end{definition}

\begin{definition}[$n-$th generalized moment function]\ \\
Given $n\in\NN$, a generalized process $\mu$ on $\Omega'$ has $n-$th generalized moment function in the sense of $\Omega'$ if $\mu$ has finite $n-$th local moment and if the functional $f\mapsto\int_{\Omega'}|\langle f, \eta\rangle|^n \mu(d\eta)$ is continuous on $\Omega$. In fact, by the Kernel Theorem, for such a generalized process $\mu$ there exists a symmetric functional $m^{(n)}_{\mu}\in(\Omega^{\otimes n})'$, which will be called the \emph{$n-$th generalized moment function in the sense of $\Omega'$}, such that
for any $f^{(n)}\in\Omega^{\otimes n}$ we have 
\begin{equation*}\label{nMomTensor}
 \langle f^{(n)}, m_{\mu}^{(n)} \rangle =\int_{\Omega'} \langle f^{(n)}, \eta^{\otimes n} \rangle \mu(d\eta).
 \end{equation*} 
By convention, $m_{\mu}^{(0)}:=\mu(\Omega')$.
\end{definition}

In analogy to the finite-dimensional case, we will denote by
$\mathcal{M}^*(\mathcal{S})$ the collection of all generalized processes concentrated on a measurable subset $\mathcal{S}$~of~$\Omega'$ with generalized moment functions (in the sense of $\Omega'$) of any order. Moreover, let us simply denote by $\mathcal{F}(\Omega')$ the collection of all infinite sequences $(m^{(n)})_{n\in\NN_0}$ such that each $m^{(n)}\in\left(\Omega^{\otimes n}\right)'$ is a symmetric functional, namely the tensor product $\left(\Omega'\right)^{\otimes n}$ is considered to be symmetric.

The full moment problem, which in this infinite-dimensional context is often called the \emph{full realizability problem}, addresses exactly the following question.
\begin{probl}[Full realizability problem on $\mathcal{S}\subseteq\Omega'$]\label{RealProb}\  \\
Let $\mathcal{S}$ be a measurable subset of $\Omega'$ and let $m=(m^{(n)})_{n\in\NN_0}\in\mathcal{F}(\Omega')$. Find a generalized process $\mu\in\mathcal{M}^*(\mathcal{S})$ such that $m^{(n)}=m^{(n)}_\mu$ for all $n\in\NN_0$, i.e. $m^{(n)}$ is the $n-$th generalized moment function of $\mu$ for any $n\in\NN_0$.
\end{probl}
If such a measure $\mu$ does exist we say that $m$ is \emph{realized} by $\mu$ on~$\mathcal{S}$. Note that the statement of the problem requires that one finds a measure concentrated on~$\mathcal{S}$ and not only on $\Omega'$. \

An obvious positivity property which is necessary for an element in $\mathcal{F}(\Omega')$ to be the moment sequence of some measure on $\Omega'$ is the following.
\begin{definition}[Positive semidefinite sequence]\label{PosSemiDef}\ \\
A sequence $m\in\mathcal{F}(\Omega')$ is said to be \emph{positive semidefinite} if for any $f^{(j)}\in\Omega^{\otimes j}$
$$\sum_{j,l=0}^\infty \langle  f^{(j)} \otimes f^{(l)}, m^{(j+l)}\rangle\geq 0.$$
\end{definition}
This is a straightforward generalization of the classical notion of positive semidefiniteness considered in the finite-dimensional moment problem (see Definition~\ref{PSD}). Note that, as we work with real spaces, we choose the involution on $\Omega$ considered in~\cite{BeKo88} to be the identity. 

A measure $\mu\in\mathcal{M}^*(\mathcal{S})$ is called \emph{determinate} on $\mathcal{S}$ if any other $\nu\in\mathcal{M}^*(\mathcal{S})$ having the same generalized moment functions as $\mu$ is equal to~$\mu$.

\subsection{Determinacy condition for the realizability problem on conuclear spaces}
As well as for the $d-$dimensional moment problem with $d\geq 2$, the role of quasi-analyticity in the infinite-dimensional moment problem is fundamental not only to develop sufficient determinacy conditions but also to obtain the existence of a solution. The following notion is the crucial element to get analogues of Theorem~\ref{UniquenessHambMultiDim} and Theorem~\ref{HambMultiDim} for the realizability problem. 
 \begin{definition}[Determining sequence]\label{DefSeq}\ \\
Let $m\in\mathcal{F}(\Omega')$ and let $E$ be a countable total subset of $\Omega$, i.e.\! the linear span of $E$ is dense in~$\Omega$. Let us define the sequence $(m_n)_{n\in\NN_0}$ as follows
\begin{equation}\label{defCond}
m_0:=\sqrt{|m^{(0)}|}\,\text{ and }\,m_n:= \sqrt{\sup_{f_1,\ldots,f_{2n}\in E}|\langle f_1\otimes\cdots\otimes f_{2n},m^{(2n)}\rangle|},\, \forall\,n\geq 1.
\end{equation}
The sequence $m$ is said to be \emph{determining} if and only if there exists a countable total subset $E$ of $\Omega$ such that for any $n\in\NN_0$, $m_n<\infty$ and the class $C\{m_n\}$ is quasi-analytic (see\!~Definition~\ref{qaClass} and Theorem~\ref{DJ-C}).
\end{definition}
Note that from \eqref{tensorDual} it follows that for any sequence $m\in\mathcal{F}(\Omega')$ there exists a sequence $(k^{(n)})_{n\in\NN_0}\subset K$ s.t. for any $n\in\NN_0$ we have $m^{(n)}\in H_{-k^{(n)}}^{\otimes n}$. If we denote by 
\be\label{dEYuri}
d(k^{(n)}, E):=\sup_{f\in E}\|f\|_{H_{k^{(n)}}},\ee then for the $m_n$'s defined in \eqref{defCond} we have
$$
m_n\leq (d(k^{(2n)}, E))^{n}\|m^{(2n)}\|_{H_{-k^{(2n)}}^{\otimes{2n}}}^{\frac 12}.
$$
Hence, we can see that a preferable choice for $E$ is the one for which the sequence $\left(d(k^{(2n)},E)\right)_{n\in\NN}$ grows as little as possible. For instance, in~\cite[Lemma~4.5]{Inf-Ku-Ro} we proved that it is possible to explicitly construct such a set~$E$ in the case when $\Omega$ is the space of all infinitely differentiable functions with compact support in $\RR^d$. This explicit construction is based on quasi-analyticity theory and uses a technique similar to the one of ~\cite[Chapter~4, Section~9]{Gel-Shi68}.\\

Let us prove now the correspondent of Theorem~\ref{UniquenessHambMultiDim} for Problem~\ref{RealProb} in the case $\mathcal{S}=\Omega'$ (c.f.~\cite[Vol.~II, Theorem~2.1]{BeKo88} and \cite{BS71}). 
Before stating the theorem, we need some preliminary considerations.

Let $m\in\mathcal{F}(\Omega')$ be the moment sequence of a measure $\mu$ on $\Omega'$. For any $\varphi\in\Omega$ and any $n\in\NN_0$, we define $$m_{\varphi,n}:=\langle \varphi^{\otimes n}, m^{(n)}\rangle.$$ Therefore, we have $$m_{\varphi,n}=\int_{\Omega'}\langle \varphi, \eta\rangle^n\mu(d\eta)=\int_\RR t^n\mu_{\pi_\varphi}(d t),$$
where $\pi_\varphi(\eta):=\langle \varphi, \eta\rangle$ for all $\eta\in\Omega'$ and $\mu_{\pi_\varphi}$ is the image measure of $\mu$ under $\pi_\varphi$. Note that the sequence $(m_{\varphi,n})_{n\in\NN_0}$ is a log-convex sequence of real numbers and if $\mu$ is a probability then $m_{\varphi,0}=1$.

\begin{theorem}\label{KondrThmUniqueness}\ \\
Let $\Omega'$ be a Suslin space and let $\mu,\nu\in\mathcal{M}^*(\Omega')$ have the same generalized moment sequence $m=(m^{(n)})_{n\in\NN_0}$. If there exists a countable total subset $E$ of $\Omega$ such that for all $\varphi\in E$ the class $C\{m_{\varphi,n}\}$ is quasi-analytic, then $\mu=\nu$. In particular, if $m$ is determining, then the conclusion holds.
\end{theorem}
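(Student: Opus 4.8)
The plan is to reduce the infinite-dimensional uniqueness to the finite-dimensional determinacy results already established in Section~\ref{Sec-FinitDimMP}, and then to lift the resulting equality of all finite-dimensional marginals to the equality of $\mu$ and $\nu$ as measures on $\Omega'$ by invoking the Suslin hypothesis. Quasi-analyticity enters only through the Carleman-type Theorems~\ref{UniqOneDimHamb} and~\ref{UniquenessHambMultiDim}, so no new hard analysis is required; the genuinely delicate part is purely measure-theoretic.

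First I would fix an arbitrary finite family $\varphi_1,\dots,\varphi_k\in E$, consider the Borel-measurable map $\Pi:=(\pi_{\varphi_1},\dots,\pi_{\varphi_k}):\Omega'\to\RR^k$, where $\pi_{\varphi_j}(\eta)=\langle\varphi_j,\eta\rangle$, and look at the image (marginal) measures $\mu_\Pi$ and $\nu_\Pi$ on $\RR^k$. Since $\mu,\nu\in\mathcal{M}^*(\Omega')$ have finite local moments of every order, $\mu_\Pi,\nu_\Pi\in\mathcal{M}^*(\RR^k)$, and their joint moments are $\int_{\RR^k}x_1^{n_1}\cdots x_k^{n_k}\,\mu_\Pi(dx)=\langle\varphi_1^{\otimes n_1}\otimes\cdots\otimes\varphi_k^{\otimes n_k},m^{(n_1+\cdots+n_k)}\rangle$, which depend only on the sequence $m$. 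As $\mu$ and $\nu$ share the same generalized moment sequence, $\mu_\Pi$ and $\nu_\Pi$ have the same moment sequence on $\RR^k$.

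Next I would verify the multivariate Carleman condition for $\mu_\Pi$. For each $j$, the $j$-th marginal of $\mu_\Pi$ is $\mu_{\pi_{\varphi_j}}$, whose even moments are $m_{\varphi_j,2n}=L_{m_\Pi}(x_j^{2n})$. Since $(m_{\varphi_j,n})_{n}$ is log-convex, the Denjoy--Carleman Theorem~\ref{DJ-C} together with Lemma~\ref{CarlemanPari} converts the quasi-analyticity of $C\{m_{\varphi_j,n}\}$ (which holds because $\varphi_j\in E$) into $\sum_{n}m_{\varphi_j,2n}^{-1/2n}=\infty$; that is, $\mu_\Pi$ satisfies \eqref{MultCarlCondFunct}. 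Hence Theorem~\ref{UniquenessHambMultiDim} (or, when $k=1$, Theorem~\ref{UniqOneDimHamb}) gives $\mu_\Pi=\nu_\Pi$. Consequently $\mu$ and $\nu$ possess identical finite-dimensional distributions along every finite subset of $E$.

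Finally comes the step I expect to be the main obstacle, namely upgrading the equality of all $E$-marginals to $\mu=\nu$ on the full Borel $\sigma$-algebra $\mathcal{B}(\Omega')$. The cylinder sets $\Pi^{-1}(B)$ based on finite subsets of $E$ form a $\pi$-system on which the finite measures $\mu$ and $\nu$ agree, so by Dynkin's $\pi$--$\lambda$ theorem they coincide on the generated cylindrical $\sigma$-algebra $\sigma(\{\pi_\varphi:\varphi\in E\})$. Because $E$ is total and the duality pairing is continuous, each $\pi_\varphi$ with $\varphi\in\Omega$ is a pointwise limit of functions measurable with respect to this $\sigma$-algebra, so it already equals $\sigma(\{\pi_\varphi:\varphi\in\Omega\})$. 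Here the Suslin hypothesis is essential: on a Suslin space the cylindrical $\sigma$-algebra generated by the continuous dual coincides with the Borel $\sigma$-algebra, whence $\mu=\nu$ on $\mathcal{B}(\Omega')$. The final assertion follows because, by Definition~\ref{DefSeq}, $m$ determining forces $\sqrt{m_{\varphi,2n}}\le m_n$ for every $\varphi\in E$; the even-index reformulation identifies quasi-analyticity of $C\{m_{\varphi,n}\}$ with that of $C\{\sqrt{m_{\varphi,2n}}\}$, and the latter is a subclass of the quasi-analytic class $C\{m_n\}$ and therefore quasi-analytic, so the hypothesis of the theorem is met.
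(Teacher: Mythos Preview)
Your proof is correct and follows essentially the same route as the paper: reduce to finite-dimensional marginals indexed by $E$, apply the multivariate Carleman determinacy (the paper phrases this as Theorem~\ref{UniqOneDimHamb} plus Petersen's Theorem~\ref{PetersenThm}, which is exactly what underlies Theorem~\ref{UniquenessHambMultiDim}), and then use that on a Suslin space the cylindrical $\sigma$-algebra based on a total set generates the Borel $\sigma$-algebra (the paper cites this as Fernique's result in \cite{Sch73}, which is precisely your $\pi$--$\lambda$ argument combined with the Suslin hypothesis). The treatment of the ``in particular'' clause via $\sqrt{m_{\varphi,2n}}\le m_n$ and Lemma~\ref{CarlemanPari} is identical to the paper's.
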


\proof\ \\
Since for any $\varphi\in E$ the class $C\{m_{\varphi,n}\}$ is quasi-analytic and the sequence $(m_{\varphi,n})_{n\in\NN_0}$ is log-convex, by Lemma~\ref{CarlemanPari} and Theorem~\ref{UniqOneDimHamb}, it follows that $\mu_{\pi_\varphi}=\nu_{\pi_\varphi}$ on $\RR$. To show that $\mu=\nu$ on $\Omega'$, it is enough to prove that $\mu$ and~$\nu$ coincide on all the cylindrical sets
$$C(f_1,\ldots, f_n; B):=\{\eta\in\Omega': (\langle f_1, \eta\rangle,\ldots,\langle f_n, \eta\rangle)\in B\},$$
with $n\in\NN$, $f_1,\ldots, f_n\in E$ and $B$ in the Borel $\sigma-$algebra $\mathcal{B}(\RR^n)$ on $\RR^n$.
In fact, since $E$ is total in $\Omega$ and $\Omega'$ is Suslin, a theorem due to Fernique (see~\cite[Lemma 18]{Sch73}) guarantees that the Borel $\sigma-$algebra on $\Omega'$ is generated by all the cylinders above. 

Since for any $n\in\NN$ and for any $f_1,\ldots, f_n\in E$, we have already proved that $\mu_{\pi_{f_j}}=\nu_{\pi_{f_j}}$ on $\RR$ for all $j\in\{1,\ldots,n\}$, Petersen's Theorem~\ref{PetersenThm} implies that $\mu(C(f_1,\ldots, f_n; B))=\nu(C(f_1,\ldots, f_n; B))$ for any $B\in\mathcal{B}(\RR^n)$. 

In particular, if $m$ is determining then there exists a countable total subset $E$ of $\Omega$ such that $C\{m_n\}$ is quasi-analytic, where $m_n$ is defined as in~\eqref{defCond}. Then for any $\varphi\in E$ and $n\in\NN_0$ we get 
$\sqrt{m_{\varphi, 2n}}\leq m_n,$
which implies that $C\{\sqrt{m_{\varphi, 2n}}\}$ is quasi-analytic. Hence, by Lemma~\ref{CarlemanPari}, $C\{m_{\varphi,n}\}$ is quasi-analytic. Then the conclusion follows by the first part of this proof.\\
\endproof

Let us state now the analogue of Theorem~\ref{HambMultiDim} for Problem~\ref{RealProb} in the case $\mathcal{S}=\Omega'$ (see\!~\cite[Vol.~II, Theorem~2.1]{BeKo88} and ~\cite{BS71}).
\begin{theorem}\label{KondrThm}\ \\
Let $\Omega'$ be a Suslin space. If $m\in\mathcal{F}(\Omega')$ is a positive semidefinite sequence which is also determining, then there exists a unique non-negative generalized process $\mu\in\mathcal{M}^*(\Omega')$ such that for any $n\in\NN_0$ and for any $f^{(n)}\in\Omega^{\otimes n}$
 \begin{equation*}\label{GenRealiz}
 \left\langle f^{(n)}, m^{(n)} \right\rangle=\int_{\Omega'}\left\langle f^{(n)},\eta^{\otimes n}\right\rangle\mu( d\eta).
 \end{equation*}
 \end{theorem}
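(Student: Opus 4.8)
The plan is to transfer the operator-theoretic scheme used for Theorem~\ref{HambMultiDim} to the conuclear setting, the only genuinely new difficulty being the final localisation of the representing measure on $\Omega'$. Fix a countable total subset $E=\{f_1,f_2,\dots\}\subset\Omega$ witnessing that $m$ is determining. First I would perform the Gelfand--Naimark--Segal construction: on the algebra $\P$ of cylinder polynomials generated by the functionals $\eta\mapsto\langle f,\eta\rangle$, $f\in\Omega$, define the Riesz functional $L_m$ by $L_m(\langle f_1,\cdot\rangle\cdots\langle f_n,\cdot\rangle):=\langle f_1\otimes\cdots\otimes f_n,m^{(n)}\rangle$. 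Positive semidefiniteness of $m$ (Definition~\ref{PosSemiDef}) makes $\langle P,Q\rangle:=L_m(PQ)$ a quasi-inner product, and by Cauchy--Schwarz $N:=\{P:L_m(P^2)=0\}$ is an ideal; let $\mathcal{H}_m$ be the completion of $\P/N$. For each $f\in E$ let $X_f$ be the symmetric multiplication operator $X_f(P):=\langle f,\cdot\rangle\,P$, and let $\mathcal{D}$ be the image in $\mathcal{H}_m$ of the monomials $\langle f_{i_1},\cdot\rangle\cdots\langle f_{i_p},\cdot\rangle$. Since $E$ is total in $\Omega$ and the moment functions are continuous, $\mathcal{D}$ is total in $\mathcal{H}_m$, and clearly $X_f\mathcal{D}\subset\mathcal{D}$ with $X_fX_g=X_gX_f$ on $\mathcal{D}$.

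The second step is to show that $\mathcal{D}$ consists of quasi-analytic vectors for every $X_f$, $f\in E$. For a monomial $v$ and $k\in\NN$, the Cauchy--Schwarz inequality for $L_m$ gives $\|X_f^{k}v\|^{2}=L_m(\langle f,\cdot\rangle^{2k}v^{2})\leq L_m(\langle f,\cdot\rangle^{4k})^{1/2}L_m(v^{4})^{1/2}$, and by the very definition~\eqref{defCond} of $(m_n)$ one has $L_m(\langle f,\cdot\rangle^{4k})=\langle f^{\otimes 4k},m^{(4k)}\rangle\leq m_{2k}^{2}$ because $f\in E$. Hence $\|X_f^{k}v\|\leq C_v\,\sqrt{m_{2k}}$ with $C_v$ independent of $k$, so that $\sum_k\|X_f^{k}v\|^{-1/k}$ diverges as soon as $\sum_k m_{2k}^{-1/(2k)}=\infty$. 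The determining hypothesis makes $C\{m_n\}$ quasi-analytic, and Lemma~\ref{CarlemanPari} converts this into precisely the even-index divergence $\sum_k m_{2k}^{-1/(2k)}=\infty$ (exactly as in Theorem~\ref{UniqOneDimHamb}), while Proposition~\ref{ConstTimesCarl} absorbs the constant $C_v$. Thus $\mathcal{D}\subseteq\mathcal{D}^{qa}(X_f)$ for each $f\in E$. Applying Theorem~\ref{NussbaumForTWOQA} to every pair $(X_f,X_g)$ shows that the closures $\{\overline{X_f}\}_{f\in E}$ form a family of pairwise strongly commuting self-adjoint operators.

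Next I would invoke the projection spectral theorem for a countable family of strongly commuting self-adjoint operators (cf.~\cite[Vol.~I, Ch.~3]{BeKo88}) to obtain a joint resolution of the identity $\mathbf{E}(\cdot)$ on $\RR^{\NN_0}\cong\RR^{E}$ and the associated spectral measure $\rho(\Delta):=\langle 1,\mathbf{E}(\Delta)1\rangle$. Evaluating moments against the cyclic vector $1$ gives $\int_{\RR^{\NN_0}}t_{i_1}\cdots t_{i_n}\,\rho(dt)=\langle 1,\overline{X_{f_{i_1}}}\cdots\overline{X_{f_{i_n}}}1\rangle=L_m(\langle f_{i_1},\cdot\rangle\cdots\langle f_{i_n},\cdot\rangle)$, so that the moments of $\rho$ reproduce the entries of $m$ along the coordinates indexed by $E$.

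The hard part, and the place where nuclearity is indispensable, is to upgrade $\rho$ from a measure on $\RR^{\NN_0}$ to a Radon measure concentrated on $\Omega'$ under the embedding $\eta\mapsto(\langle f_i,\eta\rangle)_i$. Here I would argue as in Berezansky--Kondratiev: the second moment function $m^{(2)}\in(\Omega^{\otimes2})'$ is continuous, hence $m^{(2)}\in H_{-k_0}^{\otimes2}$ for some $k_0\in K$; nuclearity of $\Omega$ furnishes $k_1$ with $H_{k_1}\hookrightarrow H_{k_0}$ quasi-nuclear, and summing the second moments against an orthonormal basis of $H_{k_1}$ yields $\int\|\eta\|_{-k_1}^{2}\,\rho(d\eta)<\infty$. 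This forces $\rho$ to be concentrated on $H_{-k_1}\subseteq\Omega'$; the Suslin property of $\Omega'$ together with Fernique's theorem (as used in the proof of Theorem~\ref{KondrThmUniqueness}) guarantees that the cylinder $\sigma$-algebra generated by $E$ coincides with $\mathcal{B}(\Omega')$, so the resulting $\mu$ is a genuine element of $\mathcal{M}^*(\Omega')$ realizing $m$. Finally, uniqueness is not re-proved from scratch: since $m$ is determining, any two realizing measures coincide by Theorem~\ref{KondrThmUniqueness}. I expect the concentration step to be the main obstacle, as it is the only ingredient with no finite-dimensional counterpart and the one that genuinely exploits the nuclear rigging.
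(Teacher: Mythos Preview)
Your proposal is correct and follows essentially the same scheme as the paper's (sketch) proof: GNS construction on $\mathscr{P}_\Omega(\Omega')$, verification that the $E$-monomials are quasi-analytic vectors for each multiplication operator, Nussbaum's theorem for pairwise strong commutativity, the spectral theorem for a countable family to get a measure on $\RR^E$, concentration on $\Omega'$ via the nuclear rigging, and uniqueness by Theorem~\ref{KondrThmUniqueness}. The only notable difference is in the quasi-analytic estimate: the paper observes directly that $\|A_e^{n}P_k\|\le m_{n+k}$ (since every tensor factor lies in $E$, the expression is one of the competitors in~\eqref{defCond}) and then applies the shift Lemma~\ref{LemmadelloShift}, whereas you interpose a Cauchy--Schwarz step to reach $\|X_f^{k}v\|\le C_v\sqrt{m_{2k}}$ and invoke Lemma~\ref{CarlemanPari}---both routes are valid, though the paper's bound is sharper and avoids the extra square root (note that both Lemma~\ref{CarlemanPari} and Lemma~\ref{LemmadelloShift} need the log-convexity of $(m_n)$, which follows from the positive semidefiniteness of $m$ via Cauchy--Schwarz for $L_m$ and is asserted without proof in the paper's sketch as well).
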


The original proof of Theorem~\ref{KondrThm} in ~\cite{BeKo88} uses a slightly less general definition of determining sequence. Indeed, the authors require that the class $$C\left\{d(k^{(2n)},E)^n\left\|m^{(2n)}\right\|_{H_{-k^{(2n)}}^{\otimes 2n}}^{1/2}\right\}$$ is quasi-analytic, which in turn implies that $C\{m_n\}$ is also quasi-analytic. Nevertheless, their proof also works using just the bound given by Definition~\ref{DefSeq}. The latter has actually the advantage to guarantee that, whenever $m$ is realizable on $\Omega$, the sequence $(m_n)_{n\in\NN_0}$ is log-convex. This is an essential property to obtain necessary and sufficient conditions for the realizability problem on semi-algebraic sets (see~\cite{Inf-Ku-Ro} for more details on this topic). 

\proof (Sketch)\ \\
The general scheme of the proof of Theorem~\ref{KondrThm} is very similar to the one of Theorem~\ref{HambMultiDim}. As in the finite-dimensional case, the GNS construction is used to define a Hilbert space $\mathcal{H}_m$ associated to the starting positive semidefinite sequence $m\in\mathcal{F}(\Omega')$, which is now a sequence of functionals and no more of real numbers. 
Consider the set $\mathscr{P}_{\Omega}\left(\Omega'\right)$ of all polynomials on $\Omega'$ of the form 
\begin{equation}\label{poly}
P(\eta) := \sum_{j=0}^N\langle f^{(j)},\eta^{\otimes j}\rangle,
\end{equation}
where $f^{(0)}\in\RR$ and $f^{(j)}\in\Omega^{\otimes j}$, $j=1,\ldots,N$ with $N\in\NN$. Let us define
the Riesz functional $L_m$ associated to $m\in\mathcal{F}(\Omega')$ as 
\begin{eqnarray*}
L_m: &\mathscr{P}_{\Omega}\left(\Omega'\right)&\to\RR \\
 \  &P(\eta)=\sum\limits_{n=0}^N\langle p^{(n)} , \eta^{\otimes n}\rangle & \mapsto L_m(P):=\sum_{n=0}^N\langle p^{(n)} , m^{(n)}\rangle .
\end{eqnarray*}
Since $m$ is positive semidefinite, the bilinear form given by
$\langle P, Q \rangle:=L_m(PQ)$
is a quasi-inner product on $\mathscr{P}_{\Omega}\left(\Omega'\right)$. After the factorization of $\mathscr{P}_{\Omega}\left(\Omega'\right)$ w.r.t. $N:=\{P\in\mathscr{P}_{\Omega}\left(\Omega'\right): L_m(P^2)=0\}$ and the subsequent completion of this quotient space, we obtain a Hilbert space $\mathcal{H}_m$. 

For any $e\in E$, let us introduce the multiplication operator
$A_e$ on $\mathcal{H}_m$ defined by $$A_eP:=\sum_{j=0}^N\langle e\otimes f^{(j)},\eta^{\otimes (j+1)}\rangle,\,\text{ for any }P\in\mathscr{P}_{\Omega}\left(\Omega'\right)\,\text{as in \eqref{poly}}.$$ 
Thanks to the determining condition, it is possible to show that the domain of each of these operators contains a countable total subset of quasi-analytic vectors, namely, the set all the polynomials $P_k(\eta):=\langle f_1\otimes\cdots\otimes f_k, \eta^{\otimes k}\rangle$ with $k\in\NN$ and $f_1,\ldots,f_k\in E$. Indeed, for any $e\in E$ and any $k\in\NN$ we have that $P_k\in\cap_{n=1}^\infty \mathcal{D}(A_e^n)$. Moreover, for any $n\in\NN$ we get
\begin{eqnarray*}
\|A_e^n P_k\|&=&\langle A_e^n P_k,A_e^n P_k\rangle^{\frac 12}=L_m(\langle f_1\otimes\cdots\otimes f_k\otimes\underbrace{e\otimes\cdots\otimes e}_{n \text{ times}}, \eta^{\otimes (k+n)}\rangle^2)^{\frac 12}\\
&=&\langle f_1^{\otimes 2}\otimes\cdots \otimes f_k^{\otimes 2}\otimes \underbrace{e\otimes\cdots\otimes e}_{2n \text{ times}}, m^{(2(k+n))}\rangle^{\frac 12}\leq m_{k+n}.
\end{eqnarray*}
By Definition~\ref{DefSeq}, the class $C\{m_n\}$ is quasi-analytic and the sequence $(m_n)_{n\in\NN}$ is log-convex. Then, by Lemma~\ref{LemmadelloShift}, the class  $C\{m_{k+n}\}$ is also quasi-analytic. Hence, the previous estimate shows that each $P_k$ is a quasi-analytic vector for $A_e$. As in the finite-dimensional case, it is possible to show that these operators admit unique self-adjoint extensions, which are pairwise strongly commuting. Therefore, by the spectral theorem for infinitely countable many unbounded self-adjoint operators (see~\cite[Vol.~I, Section~2]{BeKo88}), there exists a unique measure representing those operators. Hence, this spectral measure $\mu$ realizes $m$ on $\RR^E$. 

The final part of the proof consists in showing that the spectral measure is actually supported on $\Omega'$. Moreover, since $m$ is determining by assumption, Theorem~\ref{KondrThmUniqueness} also guarantees that the measure $\mu$ is the unique measure realizing $m$ on $\Omega'$.\\
\endproof

\begin{remark}\ \\
The $d-$dimensional moment problem on $\RR^d$ is a special case of Problem~\ref{RealProb} for $\Omega=H_0=\RR^d$. Hence, an analogue of Theorem~\ref{KondrThm} can be proved also in the finite-dimensional case, where the condition $m:=(m^{(n)})_{n\in\NN_0}\in\mathcal{F}\left(\RR^{d}\right)$ holds for any multisequence of real numbers. In fact, if $\{e_1,\ldots, e_d\}$ denotes the canonical basis of $\RR^d$ then we have that for each $n\in\NN_0$, $$m^{(n)}:=\sum\limits_{\stackrel{n_1,\ldots,n_d\in\NN_0}{n_1+\cdots+n_d=n}}m_{(n_1,\ldots, n_d)}^{(n)} \underbrace{e_1\otimes\cdots\otimes e_1}_{n_1\text{ times}}\otimes\cdots\otimes \underbrace{e_{d}\otimes\cdots\otimes e_d}_{n_d\text{ times}}\in\RR^{dn}.$$ 
The determining condition on $m$ reduces to the requirement that the class 
\begin{equation}\label{classMax}
C\left\{\sqrt{\max\limits_{\stackrel{n_1,\ldots,n_d\in\NN_0}{n_1+\cdots+n_d=2n}}|m_{n_1,\ldots, n_d}^{(2n)}|}\right\}\end{equation}
 is quasi-analytic. This follows by taking $E:=\{e_1,\ldots,e_d\}$ in Definition~\ref{DefSeq}. Note that the quasi-analyticity of the class \eqref{classMax} implies the multivariate Carleman condition~\eqref{MultCarlCondFunct}. Hence, Theorem~\ref{KondrThm} gives here a slightly weaker version than Theorem~\ref{HambMultiDim}, but it is now clear that it is the quasi-analyticity the key stone on which both results are based. Let us also underline that, whenever the starting sequence $m$ is realizable on $\RR^d$, the sequence in \eqref{classMax} is log-convex.
\end{remark}

The infinite-dimensional analogue of the Stieltjes moment problem was considered by \v{S}ifrin in ~\cite{S74}, where he develops the analysis of the infinite-dimensional moment problem on dual cones in conuclear spaces. In particular, applying \v{S}ifrin's result to a generating cone $\mathcal{K}$ of a nuclear space $\Omega$ as before, it is possible to obtain a version of Theorem~\ref{KondrThm} for the realizability problem on the dual cone of such a $\mathcal{K}$, but with the difference that the determining condition is replaced by the requirement that the class $C\{\sqrt{m_n}\}$ is quasi-analytic where the $m_n$'s are defined as in Definition~\ref{DefSeq}. This condition is slightly more general than the one given by \v{S}ifrin in~\cite{S74}, which can be rewritten as  
\be\label{GenStieltCond}
\sum_{n=1}^\infty  \frac{1}{\sqrt[4n]{d(k^{(2n)},E)^{2n}\left\|m^{(2n)}\right\|_{H_{-k^{(2n)}}^{\otimes 2n}}}}=\infty,
\ee 
where $d(k^{(2n)},E)$ is defined as in \eqref{dEYuri} and $E$ is total in $\Omega$. Condition \eqref{GenStieltCond} is called \emph{generalized Stieltjes' condition}. In fact, it is easy to see that the difference between \eqref{GenStieltCond} and the original determining condition given by Berezansky and Kondratiev, i.e.
$$\sum_{n=1}^\infty  \frac{1}{\sqrt[2n]{d(k^{(2n)},E)^{2n}\left\|m^{(2n)}\right\|_{H_{-k^{(2n)}}^{\otimes 2n}}}}=\infty,$$ is the same as the difference between Stieltjes' condition~\eqref{StieltCond} and Carleman's condition~\eqref{CarlCond} in the one-dimensional case. 

As in the finite-dimensional case, the geometry of the support $\mathcal{S}$ again allows to get easier conditions for realizability (see~\cite{Inf-Ku} and \cite{Inf-Ku-Ro} for some examples).

\section{Appendix: Log-convexity and quasi-analyticity}\label{App-LogconvQA}
We conclude this paper with some properties of log-convex sequences which are useful in relation to the quasi-analyticity of the associated classes of functions.
\begin{prop}\label{LogConvexChar} \ \\
For a sequence of positive real numbers $(M_n)_{n\in\NN_0}$  the following are equivalent
\begin{enumerate}[(a).]
\item $(M_n)_{n\in\NN_0}$ is log-convex.
\item $\left(\frac{M_{n}}{M_{n-1}}\right)_{n\in\NN}$ is monotone increasing.
\item $\left(\ln(M_n)\right)_{n\in\NN}$ is convex.
\end{enumerate}
\end{prop}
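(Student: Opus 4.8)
The plan is to observe that all three conditions are simply reformulations of the single defining inequality $M_n^2\leq M_{n-1}M_{n+1}$ ($n\geq 1$), and to pass between them using the positivity of the terms and the monotonicity of the logarithm. Rather than running a cyclic chain of implications, I would prove (a)$\Leftrightarrow$(b) and (a)$\Leftrightarrow$(c) directly, since each equivalence is a one-line manipulation; this keeps the argument transparent and avoids any redundant step.

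For (a)$\Leftrightarrow$(b), I would fix $n\geq 1$ and divide the inequality $M_n^2\leq M_{n-1}M_{n+1}$ through by the positive quantity $M_{n-1}M_n$. This is legitimate precisely because every $M_k$ is assumed strictly positive, so the operation preserves the inequality, and it yields $\frac{M_n}{M_{n-1}}\leq \frac{M_{n+1}}{M_n}$. Since the latter holds for every $n\geq 1$ if and only if the former does, and it is exactly the statement that the ratio sequence $\left(\frac{M_n}{M_{n-1}}\right)_{n\in\NN}$ is monotone increasing, the two conditions are equivalent. Reading the same chain of divisions backwards recovers (a) from (b).

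For (a)$\Leftrightarrow$(c), I would set $a_n:=\ln(M_n)$ and recall that convexity of a sequence indexed by the integers is the discrete second-difference condition $a_{n+1}-a_n\geq a_n-a_{n-1}$, equivalently $a_{n-1}+a_{n+1}\geq 2a_n$. Since both $\exp$ and $\ln$ are strictly increasing, applying $\exp$ preserves the direction of the inequality and turns this into $M_{n-1}M_{n+1}\geq M_n^2$, which is precisely log-convexity; conversely, applying $\ln$ to the log-convexity inequality returns the second-difference condition. Hence (a) and (c) coincide.

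The only point requiring care — rather than a genuine obstacle — is to make explicit what convexity means for a sequence: for a function defined only on the integers, convexity is taken to be the non-negativity of the discrete second difference (equivalently, the non-decrease of the slopes $a_n-a_{n-1}$), which is just condition (b) written additively. Once this is pinned down, all three statements are literally the same inequality viewed multiplicatively, as an inequality of consecutive ratios, and logarithmically, so no further estimation is needed.
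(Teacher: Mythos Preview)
Your argument is correct. The paper proceeds slightly differently: it treats (a)$\Leftrightarrow$(b) as obvious and deduces (c)$\Rightarrow$(a) exactly as you do, but for (b)$\Rightarrow$(c) it does not take sequence convexity in its second-difference form. Instead it proves the full three-point chord inequality $(m-n)\ln M_k \leq (k-n)\ln M_m + (m-k)\ln M_n$ for all integers $n\leq k\leq m$ directly, by comparing the averages of the telescoping sums $\sum_j \ln(M_j/M_{j-1})$ over the two sub-intervals $[n,k]$ and $[k,m]$ and using that the summands are monotone by (b). Your route is shorter because you adopt the second-difference definition of convexity from the outset, making (a)$\Leftrightarrow$(c) a one-line application of $\ln$ and $\exp$; the paper's route has the minor advantage of delivering the general convexity inequality explicitly, without invoking the (standard but unstated) fact that the two formulations coincide for integer-indexed sequences.
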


\proof\ \\
The conditions (a) and (b) are obviously equivalent. If (c) holds, then
\[
2\ln M_{n}
\leq \ln M_{n+1}+ \ln M_{n-1},
\]
which implies (a). Let us assume (b), then for any $n, m, k\in\NN$ such that $n \leq  k\leq  m$ we have
\begin{equation}\label{Con2}
\frac{1}{k-n}\sum_{j=n+1}^k \ln \left( \frac{M_j}{M_{j-1}}\right) \leq
\frac{1}{m-k}\sum_{j=k+1}^m  \ln \left( \frac{M_j}{M_{j-1}}\right),
\end{equation}
where we used the fact that the denominators of the pre-factors are equal to the number of summands in both sums. The inequality \eqref{Con2} is equivalent to
\begin{equation*}
(m-n)\ln{M_k} \leq (k-n) \ln M_m+(m-k)\ln M_n,
\end{equation*}
which gives the convexity of the sequence $\left(\ln(M_n)\right)_{n\in\NN}$ and so (c).\\
\endproof
\begin{corollary}\label{corInc}\ \\
If a sequence of positive real numbers $(M_n)_{n\in\NN_0}$ is log-convex with $M_0=1$, then $(\sqrt[n]{M_n})_{n\in\NN}$ is monotone increasing.
\end{corollary}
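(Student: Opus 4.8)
The plan is to pass to logarithms, so that the statement about $n$-th roots becomes a statement about the arithmetic means of the sequence $a_n := \ln M_n$. Writing $\sqrt[n]{M_n} = \exp(a_n/n)$ and using the monotonicity of $\exp$, the claim is equivalent to showing that the secant slopes $a_n/n$ from the origin are monotone increasing, i.e. that
\[
\frac{a_n}{n} \le \frac{a_{n+1}}{n+1}, \qquad \forall\, n\in\NN.
\]
Here the hypothesis $M_0 = 1$ enters crucially as $a_0 = \ln M_0 = 0$, which is precisely what pins the secants to the origin.

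First I would invoke Proposition~\ref{LogConvexChar}, whose equivalence (a)$\Leftrightarrow$(c) tells me that log-convexity of $(M_n)_{n\in\NN_0}$ is the same as convexity of the sequence $(a_n)_{n\in\NN_0}$. The single computational step is then to interpolate the index $n$ between $0$ and $n+1$: since
\[
n = \frac{n}{n+1}\,(n+1) + \frac{1}{n+1}\cdot 0,
\]
convexity of $(a_n)$ gives $a_n \le \frac{n}{n+1}\,a_{n+1} + \frac{1}{n+1}\,a_0 = \frac{n}{n+1}\,a_{n+1}$, where the last equality uses $a_0 = 0$. Rearranging yields $(n+1)a_n \le n\,a_{n+1}$, which is exactly the desired inequality $a_n/n \le a_{n+1}/(n+1)$. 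Re-exponentiating gives $\sqrt[n]{M_n} \le \sqrt[n+1]{M_{n+1}}$, as claimed.

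An equivalent route, which I would record as a remark, avoids logarithms altogether: by Proposition~\ref{LogConvexChar}~(b) the ratios $b_k := M_k/M_{k-1}$ are monotone increasing, and since $M_0 = 1$ one has $M_n = \prod_{k=1}^n b_k$, so that $\sqrt[n]{M_n}$ is precisely the geometric mean of $b_1,\ldots,b_n$. The monotonicity of $\sqrt[n]{M_n}$ then reduces to the elementary fact that the running geometric mean of a monotone increasing sequence is itself monotone increasing (appending a factor no smaller than the current mean cannot decrease it). I do not expect any genuine obstacle here; the only point that requires care is the correct use of the normalization $M_0=1$, without which the secants need not pass through the origin and the conclusion can fail.
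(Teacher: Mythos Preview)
Your proof is correct. Your main argument goes through part~(c) of Proposition~\ref{LogConvexChar} (convexity of $a_n=\ln M_n$) and reads the conclusion as monotonicity of the secant slopes $a_n/n$ from the origin $a_0=0$; the paper instead uses part~(b) and writes $M_n=\prod_{j=1}^n \frac{M_j}{M_{j-1}}\le \left(\frac{M_n}{M_{n-1}}\right)^n$, bounding each factor by the largest, which rearranges to $M_{n-1}^{1/(n-1)}\le M_n^{1/n}$. In other words, the route you record only as a remark---geometric means of the increasing ratios $b_k=M_k/M_{k-1}$---is precisely the paper's proof, while your primary secant-slope argument is the natural dual via the logarithm. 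Both are equally short; your version makes the role of the normalization $M_0=1$ (i.e.\ $a_0=0$) more transparent, whereas the paper's version keeps everything multiplicative and avoids passing to logs. One minor point: Proposition~\ref{LogConvexChar}(c) as stated in the paper asserts convexity of $(\ln M_n)_{n\in\NN}$, not $(\ln M_n)_{n\in\NN_0}$, so when you interpolate between $0$ and $n+1$ you are, strictly speaking, using the definition of log-convexity directly (which does include $n=0$) rather than the proposition; this is harmless but worth a word if you write it up.
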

\proof\ \\
From (b) in Proposition~\ref{LogConvexChar}, it follows that for any $n\in\NN$
\[
M_n = \frac{M_n}{M_0} = \prod_{j=1}^n \frac{M_j}{M_{j-1}} \leq
\left(\frac{M_n}{M_{n-1}} \right)^n,
\]
which gives
$
M_{n-1}^{n} \leq M_{n}^{n-1},
$
or equivalently,
$
M_{n-1}^{1/n-1} \leq M_{n}^{1/n}.
$\\
\endproof

\begin{lemma}\label{CarlemanPari}\ \\
Assume that $(M_n)_{n\in\NN_0}$ is a log-convex sequence of positive real numbers. $C\{M_n\}$ is quasi-analytic if and only if for some (and hence for any) $j\in\NN$ the class $C\{\sqrt[j]{M_{jn}}\}$ is quasi-analytic.
\end{lemma}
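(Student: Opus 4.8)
The plan is to reduce both quasi-analyticity statements to an explicit series criterion via the Denjoy--Carleman theorem and then to compare the two series by a Cauchy-type condensation argument that exploits log-convexity. First I would normalise: since quasi-analyticity is unaffected by multiplying the defining sequence by a positive constant (Proposition~\ref{ConstTimesCarl}), I may replace $(M_n)$ by $(M_n/M_0)$ and assume $M_0=1$. This rescaling sends $\sqrt[j]{M_{jn}}$ to $M_0^{-1/j}\sqrt[j]{M_{jn}}$, again a constant multiple, so it leaves the quasi-analyticity of $C\{\sqrt[j]{M_{jn}}\}$ unchanged, and it clearly preserves log-convexity.

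Writing $N_n:=\sqrt[j]{M_{jn}}$, the convexity of $k\mapsto\ln M_k$ (equivalent to log-convexity by Proposition~\ref{LogConvexChar}) evaluated at the equally spaced points $j(n-1),\,jn,\,j(n+1)$ gives $N_n^2\le N_{n-1}N_{n+1}$, so $(N_n)$ is log-convex with $N_0=1$. For a log-convex sequence with unit zeroth term, Corollary~\ref{corInc} shows that $\sqrt[k]{M_k}$ is monotone increasing, so the quantity $\beta_n:=\inf_{k\ge n}\sqrt[k]{M_k}$ equals $\sqrt[n]{M_n}$, and likewise $\inf_{m\ge n}\sqrt[m]{N_m}=\sqrt[n]{N_n}=M_{jn}^{1/(jn)}$. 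Hence, by Theorem~\ref{DJ-C} in the form of condition (b),
$$C\{M_n\}\text{ is quasi-analytic}\iff\sum_{k=1}^\infty a_k=\infty,\qquad C\{N_n\}\text{ is quasi-analytic}\iff\sum_{n=1}^\infty a_{jn}=\infty,$$
where $a_k:=M_k^{-1/k}$. Since $\sqrt[k]{M_k}$ is increasing, the sequence $(a_k)$ is non-increasing.

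It then remains to prove that, for a non-increasing sequence $(a_k)$ of positive numbers, $\sum_k a_k=\infty$ if and only if $\sum_n a_{jn}=\infty$. One implication is immediate: $\sum_n a_{jn}$ is a sub-series of $\sum_k a_k$ with non-negative terms, so its divergence forces $\sum_k a_k=\infty$. For the converse I would group the full series into consecutive blocks of length $j$, namely $\{j(n-1)+1,\dots,jn\}$, and use monotonicity to bound each block above by $j$ times its first (largest) term: $\sum_{i=1}^{j} a_{j(n-1)+i}\le j\,a_{j(n-1)+1}\le j\,a_{j(n-1)}$ for $n\ge 2$, while the $n=1$ block is bounded by $j\,a_1$. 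Summing over $n$ yields $\sum_k a_k\le j\bigl(a_1+\sum_{m\ge 1}a_{jm}\bigr)$, so $\sum_n a_{jn}<\infty$ implies $\sum_k a_k<\infty$. This monotonicity-driven blocking is the only real obstacle: without log-convexity a sub-series along the multiples of $j$ can perfectly well converge while the whole series diverges, and it is precisely Corollary~\ref{corInc} that rules this out.

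Finally, the equivalence just proved holds for every fixed $j\in\NN$ and always compares $C\{\sqrt[j]{M_{jn}}\}$ with the single fixed class $C\{M_n\}$ (the case $j=1$). Consequently the quasi-analyticity of $C\{\sqrt[j]{M_{jn}}\}$ is independent of $j$ and coincides with that of $C\{M_n\}$, which is exactly the \emph{``for some (and hence for any)''} assertion in the statement.
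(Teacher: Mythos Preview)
Your proof is correct and follows essentially the same route as the paper: normalise to $M_0=1$ via Proposition~\ref{ConstTimesCarl}, use Corollary~\ref{corInc} to turn the Denjoy--Carleman criterion into the divergence of $\sum_k M_k^{-1/k}$, and then compare the full series with its sub-series along multiples of $j$ by a monotonicity-based blocking. The only (harmless) difference is that you make explicit the log-convexity of $(N_n)$ and the identification $\beta_n=\sqrt[n]{M_n}$ before invoking Theorem~\ref{DJ-C}, points the paper leaves implicit.
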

\proof\ \\
W.l.o.g. we can assume that $M_0=1$. (In fact, if $M_0\neq 1$ then one can always apply the following proof to the sequence $(\frac{M_n}{M_0})_{n\in\NN_0}$ by Proposition~\ref{ConstTimesCarl}.)\\
Let us first note that, by Theorem~\ref{DJ-C}, it is enough to prove $\sum\limits_{n=1}^\infty \frac{1}{\sqrt[n]{M_{n}}}=\infty$ if and only if for some $j\in\NN$, $\sum\limits_{n=1}^\infty \frac{1}{\sqrt[jn]{M_{jn}}}=\infty$. By Corollary \ref{corInc}, we have
\begin{eqnarray*}
\sum_{n=1}^\infty \frac{1}{\sqrt[n]{M_{n}}}\!\!\!\!&=&\!\!\!\!\sum_{n=1}^\infty \left(
  \frac{1}{\sqrt[jn]{M_{jn}}} +
  \cdots + \frac{1}{\sqrt[jn+(j-1)]{M_{jn+j-1}}}\right) + \sum_{n=1}^{j-1} 
  \frac{1}{\sqrt[n]{M_{n}}} \\
  &\leq& j \sum_{n=1}^\infty
  \frac{1}{\sqrt[jn]{M_{jn}}} + \sum_{n=1}^{j-1}
  \frac{1}{\sqrt[n]{M_{n}}},
\end{eqnarray*}
which gives the necessity part. On the other hand, if $\sum\limits_{n=1}^\infty \frac{1}{\sqrt[jn]{M_{jn}}}$ diverges for some $j\in\NN$, then also $\sum\limits_{n=1}^\infty \frac{1}{\sqrt[n]{M_{n}}}$ diverges since it contains more summands than the former series.\\
\endproof

\begin{lemma}\label{DJ-CShift_k}\ \\
Let $({M_n})_{n\in\NN_0}$ be a sequence of positive real numbers. Then, for any $k\in\NN_0$,
$
\sum\limits_{n=1}^{\infty} \frac{M_{n+k-1}}{M_{n+k}} = \infty
$ if and only if  
$
\sum\limits_{n=1}^{\infty} \frac{M_{n-1}}{M_{n}} = \infty.
$
\end{lemma}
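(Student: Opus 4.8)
The plan is to observe that the two series are sums of the \emph{same} positive terms, differing only in where the summation begins, and hence converge or diverge together. First I would set $a_n := M_{n-1}/M_n$ for each $n\in\NN$, so that the right-hand series is exactly $\sum_{n=1}^\infty a_n$. Performing the reindexing $m=n+k$ on the left-hand series, and noting that $a_{n+k}=M_{n+k-1}/M_{n+k}$, I would rewrite it as $\sum_{n=1}^\infty a_{n+k}=\sum_{m=k+1}^\infty a_m$. Since each $M_n$ is a positive real number, every $a_n$ is a finite, strictly positive number; this is the only place the hypothesis is used.

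The key step is then the elementary fact that deleting finitely many terms from a series of nonnegative terms does not affect whether it diverges. Concretely, I would split
$$
\sum_{m=1}^\infty a_m = \sum_{m=1}^{k} a_m + \sum_{m=k+1}^\infty a_m,
$$
where the first sum on the right is a finite sum of finite numbers and is therefore finite. Consequently $\sum_{m=1}^\infty a_m=\infty$ if and only if $\sum_{m=k+1}^\infty a_m=\infty$, which is precisely the claimed equivalence between $\sum_{n=1}^\infty M_{n-1}/M_n=\infty$ and $\sum_{n=1}^\infty M_{n+k-1}/M_{n+k}=\infty$.

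There is no genuine obstacle here: the statement reduces to a reindexing combined with the tail-determines-divergence principle for nonnegative series. The only subtlety worth flagging is that the finiteness of the deleted initial terms $M_0/M_1,\dots,M_{k-1}/M_k$ relies on the positivity of the sequence $(M_n)_{n\in\NN_0}$; were one of the $M_n$ allowed to vanish, a deleted term could be infinite and the argument would fail. One could alternatively proceed by induction on $k$, reducing to the case $k=1$ in which the two series differ by the single term $M_0/M_1$, but the direct reindexing is cleaner and handles all $k\in\NN_0$ at once.
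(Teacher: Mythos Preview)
Your proposal is correct and takes essentially the same approach as the paper, which simply notes that the two series differ only by a finite number of positive summands. You have merely made explicit the reindexing and the finiteness of the omitted terms that the paper leaves implicit.
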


\begin{proof}\ \\
These two series differ only by a finite number of positive summands.\\ 
\end{proof}

\begin{lemma}\label{LemmadelloShift}\ \\
Let $({M_n})_{n\in\NN_0}$ be a log-convex sequence of positive real numbers. $C\{M_n\}$ is quasi-analytic if and only if for some (and hence for any) $k \in \NN_0$ the class $C\{M_{n+k}\}$ is quasi-analytic.
\end{lemma}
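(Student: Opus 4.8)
The plan is to push the whole question through the Denjoy-Carleman theorem down to the single ratio series $\sum_n M_{n-1}/M_n$, for which shift-invariance is precisely the content of Lemma~\ref{DJ-CShift_k}.

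First I would normalize: by Proposition~\ref{ConstTimesCarl} we may assume $M_0=1$, since passing to $(M_n/M_0)_{n\in\NN_0}$ affects neither log-convexity nor the quasi-analyticity of any of the classes in play, and commutes with shifting the index. The key structural observation is then that the shifted sequence $(M_{n+k})_{n\in\NN_0}$ is again log-convex: the defining inequalities $M_m^2\leq M_{m-1}M_{m+1}$ for $m=n+k\geq 1$ are just a subfamily of those already satisfied by $(M_n)_{n\in\NN_0}$. Consequently both sequences coincide with their own convex regularization by means of the logarithm, so that $M_n^c=M_n$ and $(M_{n+k})^c=M_{n+k}$.

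Next I invoke Theorem~\ref{DJ-C}, using the equivalence of quasi-analyticity with condition~\eqref{CondMandel}. For a log-convex sequence this condition reads $\sum_{n=1}^\infty M_{n-1}/M_n=\infty$. Applying it to each sequence in turn gives: $C\{M_n\}$ is quasi-analytic if and only if $\sum_{n=1}^\infty M_{n-1}/M_n=\infty$, whereas $C\{M_{n+k}\}$ is quasi-analytic if and only if $\sum_{n=1}^\infty M_{n+k-1}/M_{n+k}=\infty$. By Lemma~\ref{DJ-CShift_k} these two divergence statements are equivalent for every $k\in\NN_0$, and this yields the asserted equivalence simultaneously for some and for any $k$ (quasi-analyticity of $C\{M_{n+k}\}$ for one $k$ forces it for $C\{M_n\}$, hence for all shifts).

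The only point requiring a little care is that condition~\eqref{CondMandel}, together with the regularized sequence $M_n^c$, is formulated under the standing assumption $\lim_n M_n^{1/n}=\infty$. I would dispatch the complementary case directly: if $\lim_n M_n^{1/n}<\infty$, then by Remark~\ref{RemarkLimInf} the class $C\{M_n\}$ is already quasi-analytic, and shifting does not change this limit, since $M_{n+k}^{1/n}=\bigl(M_{n+k}^{1/(n+k)}\bigr)^{(n+k)/n}\to\lim_n M_n^{1/n}$, so $C\{M_{n+k}\}$ is quasi-analytic as well and the equivalence holds trivially. I expect this limit bookkeeping to be the only mildly delicate step; the heart of the argument is the matching of condition~\eqref{CondMandel} with Lemma~\ref{DJ-CShift_k}, which is immediate once log-convexity of the shifted sequence is noted.
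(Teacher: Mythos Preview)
Your proposal is correct and follows essentially the same route as the paper: note that the shifted sequence is again log-convex, use Theorem~\ref{DJ-C} via condition~\eqref{CondMandel} (with $M_n^c=M_n$ by log-convexity), and invoke Lemma~\ref{DJ-CShift_k} to pass between the two ratio series. You add explicit bookkeeping for the normalization $M_0=1$ and the borderline case $\lim_n M_n^{1/n}<\infty$, which the paper leaves implicit; this extra care is justified (and your limit computation for the shifted sequence is fine, since for log-convex $(M_n)$ with $M_0=1$ the sequence $M_n^{1/n}$ is monotone by Corollary~\ref{corInc}).
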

\proof\ \\
By Theorem \ref{DJ-C} and Lemma \ref{DJ-CShift_k}, $C\{M_n\}$ is quasi-analytic if and only if 
\begin{equation}\label{serieFrazioni_k}
\sum_{n=1}^\infty \frac{M_{n+k-1}}{M_{n+k}}=\infty.
\end{equation}
Note that the sequence
$\left(M_{n+k}\right)_{n\in\NN_0}$ is also log-convex. Hence, by Theorem~\ref{DJ-C}, \eqref{serieFrazioni_k} is equivalent to the quasi-analyticity of the class $C\left\{M_{n+k}\right\}$.\\
\endproof

\begin{theorem}\label{CarlShift}\ \\
Let $({M_n})_{n\in\NN_0}$ be a log-convex sequence of positive real numbers. If we have
$
\sum\limits_{n=1}^\infty \frac{1}{\sqrt[2n]{M_{2n}}} = \infty
$,
then 
$
\sum\limits_{n=1}^\infty \frac{1}{\sqrt[2n]{M_{2n+h}}} = \infty$ for any $h\in\NN_0$.
\end{theorem}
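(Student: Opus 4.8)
The plan is to avoid any direct manipulation of the shifted Carleman series and instead reduce the statement to a chain of equivalences for quasi-analyticity, feeding the two structural lemmas for log-convex sequences into one another. The engine is the following basic equivalence, which I would record first: by the Denjoy--Carleman Theorem~\ref{DJ-C}, for \emph{any} log-convex sequence $(R_n)_{n\in\NN}$ the convex regularization satisfies $R_n^c=R_n$, so condition~\eqref{Carl} collapses to the plain Carleman series and $C\{R_n\}$ is quasi-analytic if and only if $\sum_{n=1}^\infty R_n^{-1/n}=\infty$.

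First I would rephrase both ends of the theorem as such Carleman series. With $R_n:=\sqrt{M_{2n}}$ one has $R_n^{-1/n}=M_{2n}^{-1/(2n)}$, so the hypothesis is exactly $\sum_n R_n^{-1/n}=\infty$; with $S_n:=\sqrt{M_{2n+h}}$ the conclusion is $\sum_n S_n^{-1/n}=\infty$. A routine preliminary is to check that $(R_n)_n$, $(S_n)_n$ and $(M_{n+h})_n$ are all log-convex: since $n\mapsto\ln M_n$ is convex by Proposition~\ref{LogConvexChar}, convexity is inherited under the affine reindexing $n\mapsto 2n+h$ and under multiplication by $\tfrac12$, which covers all three cases. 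Note that treating the shift on $(M_n)$ itself (rather than on $(\sqrt{M_{2n}})_n$) is what lets me handle even and odd $h$ uniformly, since for odd $h$ the sequence $(\sqrt{M_{2n+h}})_n$ is not a shift of $(\sqrt{M_{2n}})_n$.

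Then I would run the chain. By the basic equivalence, the hypothesis $\sum_n R_n^{-1/n}=\infty$ is equivalent to $C\{\sqrt{M_{2n}}\}$ being quasi-analytic, which by Lemma~\ref{CarlemanPari} (with $j=2$) is equivalent to $C\{M_n\}$ being quasi-analytic. Since $(M_n)$ is log-convex, Lemma~\ref{LemmadelloShift} (with $k=h$) gives that $C\{M_n\}$ is quasi-analytic if and only if $C\{M_{n+h}\}$ is quasi-analytic. Applying Lemma~\ref{CarlemanPari} once more, now with $j=2$ to the log-convex sequence $(M_{n+h})_n$, shows this is equivalent to $C\{\sqrt{M_{2n+h}}\}=C\{S_n\}$ being quasi-analytic; a final use of the basic equivalence for $(S_n)$ turns this into $\sum_n S_n^{-1/n}=\sum_n M_{2n+h}^{-1/(2n)}=\infty$, which is the conclusion. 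The chain is in fact a string of equivalences, so one even obtains that hypothesis and conclusion are equivalent, not merely that one implies the other.

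The one point that needs care---and where I would place the bookkeeping---is the justification of the basic equivalence, because the convex regularization of Definition~\ref{LogConvReg} is only defined when $\lim_n R_n^{1/n}=\infty$. When $\liminf_n R_n^{1/n}<\infty$ I would instead appeal to Remark~\ref{RemarkLimInf}: the class is then automatically quasi-analytic, while the Carleman series diverges because infinitely many of its terms stay bounded away from $0$. Cleanest of all, after normalizing $R_0=1$ via Proposition~\ref{ConstTimesCarl}, is to use condition~\eqref{CarlGen} of Theorem~\ref{DJ-C} together with Corollary~\ref{corInc}: for a log-convex sequence with $R_0=1$ the roots $\sqrt[n]{R_n}$ are increasing, hence $\beta_n=\inf_{k\geq n}\sqrt[k]{R_k}=\sqrt[n]{R_n}$ and~\eqref{CarlGen} is literally the plain Carleman series, bypassing the convex regularization altogether.
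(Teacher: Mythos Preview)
Your argument is correct, and it takes a genuinely different route from the paper's own proof. The paper splits into cases according to the parity of $h$: for $h$ even it applies Lemma~\ref{LemmadelloShift} to the sequence $(\sqrt{M_{2n}})_n$ directly, while for $h$ odd it first performs an ad hoc series comparison (using Corollary~\ref{corInc} and the inequality $M_{2n-1}^{-1/(2n-1)}\le M_{2n-1}^{-1/(2n)}$ for large $n$) to pass from $\sum M_{2n}^{-1/(2n)}=\infty$ to $\sum M_{2n-1}^{-1/(2n)}=\infty$, and only then applies Lemma~\ref{LemmadelloShift} to $(\sqrt{M_{2n-1}})_n$. Your idea of routing everything through the quasi-analyticity of $C\{M_n\}$ via two applications of Lemma~\ref{CarlemanPari} (once for $(M_n)$, once for $(M_{n+h})$) and a single use of Lemma~\ref{LemmadelloShift} on $(M_n)$ itself is cleaner: it handles even and odd $h$ uniformly, avoids the boundedness/divergence dichotomy and the explicit inequality \eqref{(i)}, and as you note actually yields the equivalence of hypothesis and conclusion rather than just the implication. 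Your closing remark that condition~\eqref{CarlGen} of Theorem~\ref{DJ-C} together with Corollary~\ref{corInc} gives the ``basic equivalence'' for log-convex sequences without ever invoking the convex regularization is a nice touch that sidesteps the technical caveat in Definition~\ref{LogConvReg}.
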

\proof\ \\
By Proposition~\ref{ConstTimesCarl}, we can assume w.l.o.g.\! that $M_0=1$. Let us consider separately the cases when $h$ is even or odd.\\
If $h$ is even, then the conclusion directly follows by applying Lemma~\ref{LemmadelloShift}.\\
If $h$ is odd, then we need some more considerations. Let us first note that for bounded $({M_n})_{n\in\NN_0}$, the result is obvious. Suppose $({M_n})_{n\in\NN_0}$ diverges, then there exists $N\in\NN$ such that for any $n\geq N$ we have that $M_n\geq 1$. Hence, for any $n\geq N$, we get that
\begin{equation}\label{(i)}
\frac{1}{\sqrt[2n-1]{M_{2n-1}}}\leq \frac{1}{\sqrt[2n]{M_{2n-1}}}.
\end{equation}
Moreover, by Corollary \ref{corInc}, we have that 
$
\frac{1}{\sqrt[2n]{M_{2n}}}\leq \frac{1}{\sqrt[2n-1]{M_{2n-1}}},$ for any $n\in\NN$. Hence, since $\sum\limits_{n=1}^\infty \frac{1}{\sqrt[2n]{M_{2n}}} = \infty$, also $\sum\limits_{n=1}^\infty \frac{1}{\sqrt[2n-1]{M_{2n-1}}} = \infty$. This together with \eqref{(i)} gives that
\begin{equation}\label{SerieStar}
\sum_{n=1}^\infty \frac{1}{\sqrt[2n]{M_{2n-1}}} = \infty.
\end{equation}
Let us consider the sequence $B_n:=\sqrt{M_{2n-1}}$ for $n\geq 1$. The log-convexity of $(M_n)_{n\in\NN_0}$ implies that $({B_n})_{n\in\NN}$ is also log-convex. Then \eqref{SerieStar} is equivalent to the quasi-analyticity of the class $C\{B_n\}$ by Theorem~\ref{DJ-C}. Let $k\in\NN$ be such that $h=2k-1$, then by applying Lemma \ref{LemmadelloShift} to the sequence $({B_n})_{n\in\NN}$, we get that $C\{B_{n+k}\}$ is quasi-analytic which proves our conclusion.
\\
\endproof

\section*{Acknowledgments}
The author would like to thank Tobias Kuna and Konrad Schm\"udgen for helpful discussions and comments on the manuscript.

\def\cprime{$'$}

\end{document}